\newtheorem{theorem}{Theorem}[section]
\newtheorem{corollary}[theorem]{Corollary}
\newtheorem{lemma}[theorem]{Lemma}
\newtheorem{proposition}[theorem]{Proposition}
\theoremstyle{definition}
\newtheorem{definition}[theorem]{Definition}
\newtheorem{remark}[theorem]{Remark}
\newtheorem{notation}[theorem]{Notation}
\newtheorem{example}[theorem]{Example}
\newcommand{\htopol}{{\text{\rm h}}_{\text{\rm top}}}
\newcommand{\rank}{{\rm rank}}
\newcommand{\diag}{{\rm diag}}
\newcommand{\cB}{{\mathcal B}}
\newcommand{\cL}{{\mathcal L}}
\newcommand{\cD}{{\mathcal D}}
\newcommand{\cU}{{\mathcal U}}
\newcommand{\cM}{{\mathcal M}}
\newcommand{\cQ}{{\mathcal Q}}
\newcommand{\cP}{{\mathcal P}}
\newcommand{\cV}{{\mathcal V}}
\newcommand{\Cb}{{\mathbb C}}
\newcommand{\Eb}{{\mathbb E}}
\newcommand{\Mb}{{\mathbb M}}
\newcommand{\Zb}{{\mathbb Z}}
\newcommand{\Tb}{{\mathbb T}}
\newcommand{\Qb}{{\mathbb Q}}
\newcommand{\Rb}{{\mathbb R}}
\newcommand{\Nb}{{\mathbb N}}
\newcommand{\sB}{{\mathscr B}}
\newcommand{\sA}{{\mathscr A}}
\newcommand{\tr}{{\rm tr}}
\newcommand{\GL}{{\rm GL}}
\newcommand{\M}{M}
\newcommand{\rh}{{\rm h}}
\newcommand{\rH}{{\rm H}}
\newcommand{\ddet}{{\rm det}}
\begin{document}

\title[Entropy and Determinant]{Compact Group Automorphisms, Addition Formulas and Fuglede-Kadison Determinants}
\author{Hanfeng Li}
\thanks{Partially supported by NSF Grants DMS-0701414 and DMS-1001625.}
\address{\hskip-\parindent
Department of Mathematics, Chongqing University, 
Chongqing 401331, China.
\break
Department of Mathematics, SUNY at Buffalo,
Buffalo, NY 14260-2900, U.S.A.}
\email{hfli@math.buffalo.edu}

\subjclass[2010]{Primary 37B40, 37A35, 22D25}
\keywords{Entropy, amenable group, group extension, addition formula, Fuglede-Kadison determinant}

\begin{abstract}
For a countable amenable group $\Gamma$ and an element $f$ in the integral group ring $\Zb\Gamma$ being invertible in the group von Neumann algebra of $\Gamma$, we show that the entropy of the shift action of $\Gamma$ on the Pontryagin dual of the quotient of $\Zb\Gamma$ by its left ideal generated by $f$ is the logarithm of the Fuglede-Kadison determinant of $f$. For the proof, we establish an $\ell^p$-version of Rufus Bowen's definition of topological entropy, addition formulas for group extensions of countable amenable group actions, and an approximation formula for the Fuglede-Kadison determinant of $f$ in terms of the determinants of perturbations of the compressions of $f$.
\end{abstract}

\date{December 28, 2011}

\maketitle

\section{Introduction} \label{introduction:sec}

There are two motivations for this paper.
First, for topological or measure-preserving actions of countable amenable groups, one has the entropy defined \cite{JMO, OW}.
But unlike the case of $\Zb$-actions or $\Zb^d$-actions (for $2\le d<\infty$), not many examples have been calculated
for nonabelian group actions.
Second, the study of automorphisms of compact metrizable groups has drawn much attention in the development of ergodic theory, because of the rich interplay
between dynamics and compact group structures. Though the $\Zb$-actions of compact metrizable groups by automorphisms are well understood (cf. \cite{Lind, LW88, MT, Yuz2, Yuz1}),
and much is known for $\Zb^d$-actions (cf. \cite{LSW, SW, Ward92,  Sch, RS, LS99, LS02, Schmidt01,  EW,  KS00}), very little has been understood for general countable amenable group actions (cf. \cite{BG, Den, DSch, ER, MB, MW}).

In this paper, we calculate the entropy for a rich class of
actions of countable amenable groups on compact metrizable groups by automorphisms, providing some steps towards
understanding the entropy theory of  such algebraic actions.

Let $\Gamma$ be a countable amenable group, and let $f$ be an element in the integral group ring $\Zb\Gamma$.
One may consider the quotient
$\Zb\Gamma/\Zb\Gamma f$ of $\Zb\Gamma$ by the left ideal $\Zb\Gamma f$ generated by $f$.
Then $\Gamma$ acts on the abelian group $\Zb\Gamma/\Zb\Gamma f$
by automorphisms via left translation, and hence
acts on its Pontryagin dual (a compact metrizable abelian group)
$$ X_f:=\widehat{\Zb\Gamma/\Zb\Gamma f}$$
by automorphisms. Denote the latter action by $\alpha_f$.
Explicitly, $X_f$ consists of elements $h$
in $(\Rb/\Zb)^{\Gamma}$ satisfying
$$\sum_{\gamma \in \Gamma}f_{\gamma}h_{\gamma^{-1}\gamma'}=0$$
for
all $\gamma'\in \Gamma$, and the action $\alpha_f$ is the restriction of
the right shift action of $\Gamma$ on $(\Rb/\Zb)^{\Gamma}$ to $X_f$, i.e.,
$(\gamma h)_{\gamma'}=h_{\gamma'\gamma}$ for all $h\in X_f$ and $\gamma, \gamma'\in \Gamma$ (see Section~\ref{S-finite group}).
The topological entropy and the measure-theoretical entropy (with respect to the normalized Haar measure) of $\alpha_f$ coincide
\cite{Den}, and will be denoted by $\rh(\alpha_f)$.

When $\Gamma=\Zb$, one may identify $\Zb\Gamma$ with the one-variable Laurent polynomial ring $\Zb[u^{\pm 1}]$ via identifying
$1\in \Zb=\Gamma$ with $u$. Writing $f\in \Zb\Gamma$ as $u^{-k}(\sum_{j=0}^nc_ju^j)$ with $n\ge 0$ and $c_nc_0\neq 0$, and denoting by $\lambda_1, \dots, \lambda_n$ the roots of $\sum_{j=0}^nc_ju^j$,
Yuzvinski\u{\i} \cite{Yuz2}
showed that
\begin{eqnarray} \label{E-Z}
\rh(\alpha_f)=\log |c_n|+\sum_{j=1}^n\log^+|\lambda_j|,
\end{eqnarray}
where $\log^+t=\log \max(1, t)$ for $t\ge 0$. In general, Yuzvinski\u{i} calculated the entropy of any endomorphism of a compact metrizable group \cite{Yuz2}.

When $\Gamma=\Zb^d$ for some $1\le d<\infty$, one may identify $\Zb\Gamma$ with the $d$-variable Laurent polynomial ring $\Zb[u_1^{\pm 1}, \dots, u_d^{\pm 1}]$ naturally. For nonzero $f\in \Zb \Gamma=\Zb[u_1^{\pm 1}, \dots, u_d^{\pm 1}]$, Lind, Schmidt and Ward \cite{LSW, Sch} showed that
\begin{eqnarray} \label{E-Zd}
\rh(\alpha_f)=\log \Mb (f),
\end{eqnarray}
where $\Mb (f)$ is the {\it Mahler measure} of $f$ \cite{Mahler1, Mahler2} defined as
$$ \Mb(f)=\exp (\int_{\Tb^d}\log |f(s)|\, ds)$$
for $\Tb$ being the unit circle in $\Cb$ and $\Tb^d$ being endowed with the normalized Haar measure. (When $f=0$, clearly $\rh(\alpha_f)=\infty$.)
And this is the main step in their calculation for
the entropy of any action of $\Zb^d$ on a compact
metrizable group by automorphisms \cite{LSW, Sch}. In the case $d=1$, the calculation (\ref{E-Zd}) reduces to (\ref{E-Z}) via Jensen's formula.

Several years before Mahler introduced the Mahler measure, in \cite{FK} Fuglede and Kadison introduced a determinant $\ddet_{A}f$ for
invertible elements $f$ in a unital $C^*$-algebra $A$ with respect
to a tracial state $\tr_A$. It has found wide application in the study of $L^2$-invariants \cite{Luck}.
For a discrete group $\Gamma$, the group ring $\Zb\Gamma$ sits naturally in the left group von Neumann algebra
$\cL\Gamma$. Furthermore, $\cL\Gamma$ has a canonical tracial state $\tr_{\cL\Gamma}$. Thus one may consider
$\ddet_{\cL\Gamma} f$ for invertible $f\in \cL\Gamma$. When $\Gamma=\Zb^d$,  $f\in \Zb\Gamma$ is invertible in $\cL\Gamma$
if and only if $f$ has no zero point on $\Tb^d$. In such case, $\ddet_{\cL\Gamma} f$ is exactly
$\Mb(f)$ \cite{Den}.

In \cite{Den} Deninger pointed out the possibility of $\rh(\alpha_f)=\log \ddet_{\cL\Gamma}f$ for general countable amenable groups $\Gamma$
and $f\in \Zb\Gamma$, and confirmed  it in the special case
that $f$ is invertible in $\ell^1(\Gamma)$ (this is stronger than the condition that $f$ is invertible in
$\cL\Gamma$, see Appendix~\ref{A-invertibility}) and
positive in $\cL\Gamma$ and that $\Gamma$ has a $\log$-strong
F{\o}lner sequence. Deninger and Schmidt \cite{DSch} also confirmed it
in the special case that $f$ is invertible in
$\ell^1(\Gamma)$ and that $\Gamma$ is (amenable and) residually finite.
The connection between entropy, Mahler measure and Fuglede-Kadison determinant has been further explored by Deninger
in \cite{Den06, Den07, Den09}.

Our main result in this paper is

\begin{theorem} \label{T-main}
Let $\Gamma$ be a countable amenable group and let $f\in \Zb\Gamma$ be invertible in $\cL\Gamma$. Then
$$ \rh(\alpha_f)=\log \ddet_{\cL\Gamma} f.$$
\end{theorem}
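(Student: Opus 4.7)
The plan is to compute both sides of the identity as limits along a F{\o}lner sequence $\{F_n\}$ for $\Gamma$ and to match them term by term. The three ingredients advertised in the abstract fit together naturally: the $\ell^p$-version of Bowen's topological entropy provides a flexible counting notion on the dynamical side, the approximation formula expresses $\log\ddet_{\cL\Gamma}f$ as $\lim_n |F_n|^{-1}\log|\ddet f_n|$ where $f_n$ is a suitable perturbation of the compression $P_{F_n}fP_{F_n}$ of $f$ acting on $\ell^2(F_n)$, and the addition formulas for group extensions let the inevitable boundary contributions be folded into factors of subexponential size without changing the leading rate.

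The first step is to set up the $\ell^p$-Bowen reformulation for algebraic actions on closed shift-invariant subsets of $(\Rb/\Zb)^\Gamma$: for $1\le p<\infty$, topological entropy equals the exponential growth rate, along any F{\o}lner sequence, of the cardinality of $(F_n,\varepsilon)$-separated subsets of $X_f$ measured in an $\ell^p$ metric on coordinate projections $\pi_{F_n}\colon X_f\to(\Rb/\Zb)^{F_n}$. The reason to prefer $\ell^p$ over the usual $\ell^\infty$ Bowen metric is that small $\ell^p$-perturbations of scale $o(|F_n|^{1/p})$ become invisible in the exponential rate, and this slack is exactly what is needed to absorb the passage from $P_{F_n}fP_{F_n}$ to the perturbation $f_n$ in the approximation formula for $\ddet_{\cL\Gamma}f$.

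Next I would use the invertibility of $f$ in $\cL\Gamma$ to count $(F_n,\varepsilon)$-separated subsets of $X_f$, up to factors $\exp(o(|F_n|))$, by the integer $|\ddet f_n|$. Since $\cL\Gamma$-invertibility means that left multiplication by $f$ is a bi-Lipschitz bijection of $\ell^2(\Gamma)$, the marginal $\pi_{F_n}(X_f)$ should be at $\ell^2$-distance $o(|F_n|^{1/2})$ from the Pontryagin dual of $\Zb^{F_n}/f_n\Zb^{F_n}$, a finite abelian group of cardinality $|\ddet f_n|$. The discrepancy comes from coordinates in an $r$-neighborhood of $\partial F_n$, where $r$ depends on the tail of $f$; the number of such coordinates is $o(|F_n|)$ by the F{\o}lner condition, and the addition formulas let this boundary factor be split off as a zero-rate extension. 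Combined with the $\ell^p$-Bowen step, this yields $\rh(\alpha_f)=\lim_n|F_n|^{-1}\log|\ddet f_n|=\log\ddet_{\cL\Gamma}f$.

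The main obstacle I expect is the geometric matching: controlling $\pi_{F_n}(X_f)$ in terms of the finite-dimensional compression $f_n$ uniformly in $n$. The projection $\pi_{F_n}$ is neither injective nor surjective onto a subgroup, and without a quantitative bound on how distinct elements of $X_f$ separate on $F_n$, one cannot even conclude that $(F_n,\varepsilon)$-separated sets grow no faster than $|\ddet f_n|$. The key input should be $\|f^{-1}\|_{\cL\Gamma}<\infty$: it implies that any small $\ell^2$-perturbation on $F_n$ that extends to an element of $X_f$ must come from a small global perturbation, and hence can be discarded at scale $\varepsilon$. Making this quantitative, with explicit dependence on $\|f\|_{\cL\Gamma}$, $\|f^{-1}\|_{\cL\Gamma}$, and the F{\o}lner parameter of $F_n$, is the technical heart of the argument; once it is established, both the entropy side and the determinant side are pinned to the same limit by the approximation formula.
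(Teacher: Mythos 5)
Your overall plan --- match $(F_n,\varepsilon)$-separated subsets of $X_f$ against the finite groups $\Zb[F_n]/f_n\Zb[F_n]$ and invoke the determinant approximation formula --- is exactly the intuition of \eqref{E-intuition}, but as stated it has a genuine gap at its center: the claim that the marginal $\pi_{F_n}(X_f)$ can be matched, up to $\exp(o(|F_n|))$ multiplicity, with the dual of $\Zb[F_n]/f_n\Zb[F_n]$ for a \emph{general} invertible $f$. For this you need the perturbed compression $f_n$ to be an invertible \emph{integer} matrix with $\|f_n\|$ and $\|f_n^{-1}\|$ bounded uniformly in $n$ and $\rank(f_n-f_{F_n})=o(|F_n|)$; otherwise $\Zb[F_n]/f_n\Zb[F_n]$ is not a finite group at all, or its order is distorted by the scaling needed to make $f_n$ integral. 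The paper points out in Remark~\ref{R-uniform bound} that such integral, uniformly bounded perturbations are only known to exist when $\Gamma$ is residually finite; in general the quasitiling construction of Lemma~\ref{L-lower bound} only produces $S_n$ with $MT_n(\Zb[F_n\setminus F_n'])\subseteq\Zb[F_n]$, and after clearing denominators the operator norm blows up with $M$, so the quotient-group count yields only the \emph{lower} bound $\rh(\alpha_f)\ge\log\ddet_{\cL\Gamma}f$ (via Lemma~\ref{L-rational} and Theorem~\ref{T-approximate determinant formula}). The upper bound by direct counting, in the style of Lemma~\ref{L-positive small}, is only available when $f$ is positive, because there Deninger's Lemma~\ref{L-Deninger} gives invertibility of $f_{F_n}$ itself with $\|(f_{F_n})^{-1}\|\le\|f^{-1}\|$; the $\ell^2$-Bowen entropy of Theorem~\ref{T-sep} is used precisely there, not to absorb integrality issues for general $f$.

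You are also miscasting the addition formulas: they are Yuzvinski\u{\i}-type formulas for group extensions (Theorems~\ref{group extension1:thm}, \ref{group extension2:thm}, Corollary~\ref{C-product}), not a device for splitting off boundary contributions along a F{\o}lner sequence, and boundary terms are handled instead by the F{\o}lner condition inside the counting lemmas. Their actual role in the proof is the missing structural step in your proposal: having the lower bound for every invertible $f$ (hence also for $f^*$), and the full equality for the positive element $f^*f$ (Theorem~\ref{T-positive}), one writes $\rh(\alpha_{f^*f})=\rh(\alpha_{f^*})+\rh(\alpha_f)\ge\log\ddet_{\cL\Gamma}f^*+\log\ddet_{\cL\Gamma}f=\log\ddet_{\cL\Gamma}(f^*f)=\rh(\alpha_{f^*f})$, using $\ddet_{\cL\Gamma}f=\ddet_{\cL\Gamma}f^*$ from Theorem~\ref{T-FK}, and this sandwich forces the upper bound for $f$. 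Without this bootstrap (or a new argument producing integral uniformly bounded perturbations in general), your scheme does not yield $\rh(\alpha_f)\le\log\ddet_{\cL\Gamma}f$ and hence does not prove the theorem.
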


One of the dynamical consequences of Theorem~\ref{T-main} and the general properties of the Fuglede-Kadison determinant
is that under the hypothesis of Theorem~\ref{T-main}, the actions $\alpha_f$ and $\alpha_{f^*}$ have the same entropy, where $f^*$ is the adjoint of $f$ defined
as $(f^*)_{\gamma}=f_{\gamma^{-1}}$ for all $\gamma \in \Gamma$.
This is a very non-trivial fact, as a priori there is no relation between $\alpha_f$ and $\alpha_{f^*}$ unless $f$ is in the center of $\Zb\Gamma$.

Our proof of Theorem~\ref{T-main} consists of three steps.

In the first step, we establish Theorem~\ref{T-main} under the further assumption that $f$ is positive in $\cL\Gamma$.
Since the invertibility of $f$ in $\cL\Gamma$ means that $f^{-1}$ exists as a bounded linear operator on $\ell^2(\Gamma)$, while
Rufus Bowen's definition of topological entropy is taking the maximum of distances between finite orbits of points and should be thought of
an $\ell^{\infty}$-distance, we develop an $\ell^2$-version of his definition in Section~\ref{S-Bowen}, which is of independent interest.
Then we prove the positive case of Theorem~\ref{T-main} in Section~\ref{S-positive}, using an estimate of number of integral points in
balls and an approximation formula of Deninger for $\ddet_{\cL\Gamma}f$ in such case.

In the second step, we prove the Yuzvinski\u{\i} addition formula in Section~\ref{S-addition}, which says
that the entropy of a $\Gamma$-action on a compact metrizable group by automorphisms is the sum of the entropy of
the restriction of the action to an invariant closed subgroup and the entropy of the induced action on the quotient group.
This formula allows us to reduce the calculation for the entropy of one action to that for the entropy of simpler actions.
In fact, we establish addition formulas for group extensions in both topological and measure-theoretical settings, and
the formula in either of these settings implies the Yuzvinski\u{\i} addition formula. The proof for each of these addition formulas
employs  both topological and measure-theoretical tools, using generalization of the various fibre and conditional entropies studied
in \cite{DS}, and the addition formula for $\Gamma$-extensions in \cite{WZ, Dan} which in turn depends on Rudolph and Weiss's orbit equivalence method in \cite{RW}.

The third step is to prove $\rh(\alpha_f)\ge \log\ddet_{\cL\Gamma}f$ under the hypothesis in Theorem~\ref{T-main}.
Compared to the positive case in step one, the main difficulty here is that the compression of $f$ to a nonempty finite subset of $\Gamma$ map fail to
be invertible. Our method of dealing with this difficulty is to {\it perturb} the compression of $f$ to an invertible linear operator.
For this purpose, we establish an approximation formula for $\log \ddet_{\cL\Gamma}f$ in terms of the determinants of the compressions, in Section~\ref{S-determinant}. This uses an approximation formula for traces, initiated by  L\"{u}ck in work on $L^2$-invariants \cite{Luck94}
and extended by Schick in \cite{Schick01}. We complete the third step in Section~\ref{S-proof of ineqaulity}, using Ornstein and Weiss's theory of quasitiling in \cite{OW}.

The proof of Theorem~\ref{T-main}, which uses the fact that the Fuglede-Kadison determinants of $f$ and $f^*$ are equal,
is finished in Section~\ref{S-consequence}. Some dynamical consequences of the theorem including
the equality of $\rh(\alpha_f)$ and $\rh(\alpha_{f^*})$ are also established there. We recall some background in Section~\ref{S-preliminary}, and give a proof of the case $\Gamma$ is finite in Section~\ref{S-finite group}, which shows clearly how the entropy and the Fuglede-Kadison determinant
are connected via several equalities. In an appendix, we compare invertibility in $\ell^1(\Gamma)$ and $\cL\Gamma$.

Recently, entropy has been defined for continuous actions of a countable sofic group on compact metrizable spaces and measure-preserving
actions of a countable sofic group on standard probability measure spaces, with respect to a sofic approximation sequence of the sofic group \cite{Bow2, KL11a}.
The class of sofic groups include all discrete amenable groups and residually finite groups.
The sofic entropies coincide with the classical entropies when the sofic group is amenable \cite{Bow5, KL11b}.
For a countable residually finite (not necessarily amenable) group $\Gamma$ and an $f\in \Zb\Gamma$, when the sofic approximation
sequence of $\Gamma$ comes from a sequence of finite-index normal subgroups of $\Gamma$, in various cases it has been shown that
the sofic topological entropy
and the sofic measure entropy (for the normalized Haar measure of $X_f$) of $\alpha_f$ are equal to $\log \ddet_{\cL\Gamma} f$ \cite{Bow4, BowenLi, KL11a}.

Throughout this paper, for a group $G$, we denote by $e_G$ the identity element of $G$. For a discrete group
$\Gamma$, we write $\Cb[[\Gamma]]$, $\Rb[[\Gamma]]$ and $\Zb[[\Gamma]]$  for $\Cb^{\Gamma}$, $\Rb^{\Gamma}$ and $\Zb^{\Gamma}$ respectively.
 For a finite set $F$, we write $\Cb[F]$ for $\Cb^{F}$, and equip it
with the standard $\ell^2$-norm. For a Hilbert space $H$, we denote by $B(H)$ the set of bounded linear operators on $H$, and equip it with the operator norm $\|\cdot \|$.

After this paper was finished, Douglas Lind informed us that Douglas Lind and Klaus Schmidt have independently
obtained results similar to ours in Section~\ref{S-addition}.

{\it Acknowledgements.} I thank George Elliott, Wen Huang and Zhuang Niu for helpful discussion, especially my colleague Jingbo Xia for
discussion about Example~\ref{E-not invertible}.
I also thank Lewis Bowen, Christopher Deninger, Douglas Lind, Andreas Thom, Thomas Ward and the referee for very helpful comments.
Part of this work was carried out while I visited Fields Institute in Summer 2007 and Vanderbilt University in Fall 2009.
I am grateful to Guoliang Yu for his warm hospitality.

\section{Preliminaries} \label{S-preliminary}

\subsection{Background on Entropy Theory} \label{SS-entropy}

In this subsection we recall some background about the entropy theory.
The reader is referred to \cite{Glasner, JMO, Parry, Wal} for detail.
Throughout this paper $\Gamma$ will be a discrete amenable group, unless specified otherwise.
The amenability of $\Gamma$ means that $\Gamma$ has a (left) F{\o}lner net $\{F_n\}_{n\in J}$, i.e.,
each $F_n$ is a nonempty finite subset of $\Gamma$, and $\lim_{n\to \infty}\frac{|KF_n\Delta F_n|}{|F_n|}=0$
for every finite subset $K$ of $\Gamma$ \cite{Pat}.

The following subadditivity
result is known as the Ornstein-Weiss lemma \cite[Theorem 6.1]{LW}.

\begin{proposition}\label{subadditive:prop}
If $\varphi$ is a real-valued function
which is defined on the set of nonempty finite subsets of $\Gamma$ and satisfies
\begin{enumerate}
\item $0\le \varphi(F)<+\infty$,

\item $\varphi(F)\le \varphi(F')$ for all $F\subseteq F'$,

\item $\varphi(F\gamma)=\varphi(F)$ for all nonempty finite $F\subseteq \Gamma$ and $\gamma \in \Gamma$,

\item $\varphi(F\cup F')\le \varphi(F)+\varphi(F')$ if $F\cap F'=\emptyset$,
\end{enumerate}
then $\frac{1}{|F|} \varphi(F)$ converges to some limit $b$
as the set $F$ becomes
more and more (left) invariant in the sense that
for every $\varepsilon>0$ there exist a
nonempty finite set $K\subseteq \Gamma$ and a $\delta>0$ such that
$\big| \frac{1}{|F|} \varphi(F) -b \big| <\varepsilon$
for all nonempty finite
sets $F\subseteq \Gamma$ satisfying $|KF \Delta F|\le \delta |F|$.
\end{proposition}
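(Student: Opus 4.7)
The plan is to show that $\varphi(F)/|F|$ converges, as $F$ becomes increasingly invariant, to
$$b := \inf\{\varphi(F)/|F| : F\subseteq\Gamma \text{ nonempty and finite}\}.$$
First I would verify that $b$ is a well-defined nonnegative real number. Property (3) applied to singletons gives $\varphi(\{\gamma\}) = \varphi(\{e_\Gamma\})$ for every $\gamma$, and iterating (4) over the partition of $F$ into singletons yields $\varphi(F) \le |F|\,\varphi(\{e_\Gamma\})$; hence $0 \le \varphi(F)/|F| \le \varphi(\{e_\Gamma\})$. The lower bound $\varphi(F)/|F| \ge b$ holds by definition, so everything reduces to proving the matching upper bound $\varphi(F)/|F| \le b + O(\varepsilon)$ whenever $F$ is sufficiently invariant.

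The main tool is the Ornstein--Weiss quasi-tiling theorem. Given $\varepsilon > 0$, the goal is to produce finitely many finite sets $T_1,\dots,T_k \subseteq \Gamma$ with $\varphi(T_i)/|T_i| < b + \varepsilon$ for each $i$, such that for every sufficiently invariant nonempty finite $F$ there exist pairwise disjoint right translates $T_{i_1}c_1,\dots,T_{i_m}c_m \subseteq F$ whose union $U$ satisfies $|U| \ge (1-\varepsilon)|F|$. Granted such a quasi-tiling, the right-invariance (3), subadditivity (4), and monotonicity (2) combine to give
\[
\varphi(F) \;\le\; \sum_{j=1}^m \varphi(T_{i_j}c_j) + \varphi(F\setminus U) \;\le\; (b+\varepsilon)|U| + \varphi(\{e_\Gamma\})\,|F\setminus U|,
\]
where in the second step I use $\varphi(T_{i_j}c_j) = \varphi(T_{i_j}) < (b+\varepsilon)|T_{i_j}|$ and the singleton bound on the leftover. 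Dividing by $|F|$ and using $|F\setminus U| \le \varepsilon|F|$ yields $\varphi(F)/|F| \le b + \varepsilon(1 + \varphi(\{e_\Gamma\}))$, which suffices since $\varepsilon$ is arbitrary.

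The main obstacle, and the heart of the argument, is constructing the quasi-tiling family $T_1,\dots,T_k$. I would build it inductively: choose $T_1$ that is both a near-optimizer of $b$ and highly left invariant; at stage $i$, either $T_1,\dots,T_i$ already $\varepsilon$-quasi-tile every sufficiently invariant set and I stop, or I can select $T_{i+1}$ enormously more invariant than $T_1,\dots,T_i$, with $\varphi(T_{i+1})/|T_{i+1}| < b+\varepsilon$, such that $T_{i+1}$ itself fails to admit an $\varepsilon$-quasi-tiling by earlier translates. A density pigeonholing---each new stage requires at least an $\varepsilon$-fraction of $T_{i+1}$ to be uncovered by the previous pieces---forces termination in at most $O(1/\varepsilon)$ steps. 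The subsidiary technical point is producing near-optimal sets that are simultaneously Følner: starting from a crude near-optimizer, one averages it against a Følner window and uses subadditivity together with the singleton bound from the first paragraph to confirm that the density $\varphi/|{\cdot}|$ stays close to $b$ while invariance improves.
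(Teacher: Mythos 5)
The paper does not actually prove this proposition; it quotes it as the Ornstein--Weiss lemma from \cite[Theorem 6.1]{LW}, so your argument must be measured against the standard proof. Your skeleton (quasi-tile a sufficiently invariant $F$ by near-optimal tiles, then use (2)--(4) together with the singleton bound $\varphi(F)\le|F|\varphi(\{e_\Gamma\})$ on the uncovered remainder) is indeed the standard one, but your identification of the limit is false, and the step you defer as a ``subsidiary technical point'' is exactly where the argument breaks. The limit $b$ is in general \emph{not} $\inf\{\varphi(F)/|F|\}$ over all nonempty finite $F$. Counterexample: take $\Gamma=\Zb$ (written additively), $A=\{0,1,3\}$, and $\varphi(F):=3\,c_A(F)$, where $c_A(F)$ is the least number of translates $t+A$ needed to cover $F$. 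Properties (1)--(4) are immediate (a cover of $F'$ covers any $F\subseteq F'$, translates of covers are covers, a union of covers covers the union). Here $\varphi(A)/|A|=1$, so your infimum is at most $1$. But if translates at $m$ distinct positions cover $I_n=\{0,\dots,n-1\}$, then for all but boundedly many used positions $t$ the point $t+2\in I_n$ lies outside $t+A$ and must be covered by a translate at some $s\in\{t-1,t+1,t+2\}$; in each case $(t+A)\cap(s+A)$ contains a point $p(t)\in t+A$, which is thus covered at least twice, and each point arises as $p(t)$ for at most $|A|=3$ values of $t$. Counting incidences gives $3m\ge n+(m-O(1))/3$, hence $c_A(I_n)\ge\tfrac{3}{8}n-O(1)$ and $\varphi(I_n)/|I_n|\ge\tfrac{9}{8}-o(1)$. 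Since the intervals $I_n$ become more and more invariant, the Ornstein--Weiss limit for this $\varphi$ is at least $9/8$, strictly larger than your $b$.

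The reason this cannot be repaired as written is that the quasitiling machinery requires the tiles themselves to be very left invariant, whereas a near-minimizer of the global infimum need not be, and it cannot in general be ``upgraded'': subadditivity controls $\varphi$ of a union only through \emph{disjoint} right translates of $T$, and disjoint translates of a non-invariant $T$ need not fill a F{\o}lner set up to proportion $1-\varepsilon$ (for $A=\{0,1,3\}$, a disjoint family of translates placed at positions $t$ leaves every $t+2$ uncovered, so it covers at most about three quarters of a long interval). Thus the proposed ``average a crude near-optimizer against a F{\o}lner window'' step is not only unproved but impossible in the example above, where every sufficiently invariant set has density at least $9/8$ although $A$ has density $1$. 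The correct choice is $b:=\lim_{(K,\delta)}\inf\{\varphi(F)/|F|:F\ \text{is}\ (K,\delta)\text{-invariant}\}$, equivalently the liminf of $\varphi(F_n)/|F_n|$ along a F{\o}lner sequence: the lower bound is then built into the definition, and for the upper bound one quasitiles a very invariant $F$ by tiles that are simultaneously very invariant and of density below $b+\varepsilon$, which exist by the definition of this $b$. That is how the proof in Ornstein--Weiss and Lindenstrauss--Weiss proceeds, and with that replacement the remainder of your outline does go through.
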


Let $\alpha$ be an action of $\Gamma$ on a compact Hausdorff space $X$ by homeomorphisms.
For any open cover $\cU$ of $X$, and any nonempty finite subset $F$ of $\Gamma$, set
$\cU^F=\bigvee_{\gamma \in F} \gamma^{-1} \cU$ and denote by $N(\cU)$ the minimal number of elements
in $\cU$ needed to cover $X$. Then the function $F\mapsto \log N(\cU^{F})$
defined on the set of nonempty finite subsets of $\Gamma$ satisfies the conditions in
Proposition~\ref{subadditive:prop}, and hence
$\frac{1}{|F|}\log N(\cU^{F})$ converges
as $F$ becomes more and more (left) invariant. We denote this limit by $\htopol(\alpha, \cU)$.
The {\it topological entropy} of $\alpha$, denoted by $\htopol(\alpha)$,  is defined as the supremum of $\htopol(\alpha, \cU)$ over
all  finite open covers $\cU$ of $X$.

Let $\alpha$ be an action of $\Gamma$ on a probability space
$(X, \cB, \mu)$ by automorphisms. For any finite measurable partition $\cP=\{P_1, \dots, P_k\}$ of $X$,
and any nonempty finite subset $F$ of $\Gamma$, set $\cP^F=\bigvee_{\gamma\in F}\gamma^{-1} \cP$ and $\rH_{\mu}(\cP)=\sum_{j=1}^k-\mu(P_j)\log \mu(P_j)$,
where we take the convention that $0\log 0=0$ so that the function $t\mapsto t\log t$ is continuous for $0\le t\le 1$.
The function $F\mapsto \rH_{\mu}(\cP^{F})$
defined on the set of nonempty finite subsets of $\Gamma$ satisfies the conditions in
Proposition~\ref{subadditive:prop}, and hence
$\frac{1}{|F|}\log \rH_{\mu}(\cP^{F})$ converges
as $F$ becomes more and more (left) invariant. We denote this limit by $\rh_{\mu}(\alpha, \cP)$.
The {\it measure entropy} or {\it Kolmogorov-Sinai entropy} of $\alpha$, denoted by $\rh_{\mu}(\alpha)$,  is defined as the supremum of $\rh_{\mu}(\alpha, \cP)$ over
all finite measurable  partitions $\cP$ of $X$.

A topological space is called a {\it Polish space} if it is separable and admits a compatible complete metric.
A probability space $(X, \cB, \mu)$ is called a {\it standard} if $\cB$ is the Borel $\sigma$-algebra for
some Polish topology on $X$.
Suppose that $(X, \cB, \mu)$ is standard and that $\cB'$ is a
sub-$\sigma$-algebra of $\cB$. Then there is a map $\Eb(\cdot |\cB'): L^1(X, \cB, \mu)\rightarrow L^1(X, \cB', \mu)$, called
the {\it conditional expectation}, determined by
$$\int_A\Eb(f|\cB')(x)\, d\mu(x)=\int_Af(x)\, d\mu(x)$$
 for every $f\in L^1(X, \cB, \mu)$
and $A\in \cB'$. Here one can use either complex or real valued functions for $L^1(X, \cB, \mu)$ and $L^1(X, \cB', \mu)$.
For any $A\in \cB$, one has $0\le \Eb(1_A|\cB')(x)\le 1$ for $\mu$ a.e. $x\in X$, where $1_A$ denotes the characteristic function of $A$. For any finite measurable partition
$\cP$ of $X$, set $\rH_{\mu}(\cP|\cB')=\sum_{P\in \cP}-\int_P\log \Eb(1_P|\cB')(x)\, d\mu(x)$. Now assume further that $\cB'$ is  $\Gamma$-invariant.
Then the function $F\mapsto \rH_{\mu}(\cP^{F}|\cB')$
defined on the set of nonempty finite subsets of $\Gamma$ satisfies the conditions in
Proposition~\ref{subadditive:prop}, and hence
$\frac{1}{|F|}\rH_{\mu}(\cP^{F}|\cB')$ converges
as $F$ becomes more and more (left) invariant. We denote this limit by $\rh_{\mu}(\alpha, \cP|\cB')$.
The {\it conditional entropy of $\alpha$ given $\cB'$}, denoted by $\rh_{\mu}(\alpha|\cB')$,  is defined as the supremum of $\rh_{\mu}(\alpha, \cP|\cB')$ over
all finite measurable  partitions $\cP$ of $X$.

For a compact space $X$, denote by $\cB_X$ the Borel $\sigma$-algebra of $X$.
If $\alpha$ is an action of $\Gamma$ on a compact space $X$ by homeomorphisms, and $\mu$ is a
regular $\Gamma$-invariant Borel probability measure on $X$, then $\alpha$ is also an action of $\Gamma$ on
the probability space $(X, \cB_X, \mu)$ by automorphisms.

Note that every (continuous) automorphism of a compact group preserves the normalized Haar measure.
Thus if $\alpha$ is an action of  $\Gamma$ on a compact group $G$ by automorphisms, it automatically
preserves the normalized Haar measure $\mu$ of $G$. Then we have both the topological entropy $\htopol(\alpha)$
and the measure entropy $\rh_{\mu}(\alpha)$.
It is a result of Deninger that
these two entropies coincide \cite[Theorem 2.2]{Den}. (It was assumed in \cite[Theorem 2.2]{Den} that
$G$ is abelian; but this is not needed.) (The case $\Gamma=\Zb$ was proved by Berg \cite{Berg}; the case $\Gamma=\Zb^d$ was proved
by Lind et al. \cite[page 624]{LSW} \cite[Theorem 13.3]{Sch}.)
Thus we shall denote $\htopol(\alpha)$ and $\rh_{\mu}(\alpha)$  simply
by $\rh(\alpha)$.

\subsection{Background on Group von Neumann Algebras and Fuglede-Kadison Determinants} \label{SS-determinant and algebra}

In this subsection we recall some background about the group von Neumann algebra and the Fuglede-Kadison determinant.

For a Hilbert space $H$, the set $B(H)$ is a $*$-algebra with $T^*$ being the adjoint of $T$, and is equipped with the operator norm
$\|\cdot \|$.
A {\it $C^*$-algebra} is a sub-$*$-algebra of $B(H)$ for some Hilbert space $H$, closed under $\|\cdot \|$.
An element $a$ in $A$ is called {\it positive} and written as $a\ge 0$ if $a=b^*b$ for some $b\in A$.
A tracial state of a unital $C^*$-algebra $A$ is a linear functional $\tr_A: A\rightarrow \Cb$ such that
$\tr_A$ takes value $1$ at the identity of $A$, $|\tr_A(a)|\le \|a\|$ and $\tr_A(ab)=\tr_A(ba)$ for all $a, b\in A$. We refer the reader to \cite{KR, Takesaki} for detail.

In this paper we shall need only three classes of $C^*$-algebras and tracial states. The first class is the $C^*$-algebra
$B(\ell^2_n)$ for each $n\in \Nb$. Each $B(\ell^2_n)$ has a unique tracial state $\tr_{B(\ell^2_n)}$. If we take an orthonormal basis
of $\ell^2_n$ and identify $B(\ell^2_n)$ with $M_n(\Cb)$, then $\tr_{B(\ell^2)}(a)=\frac{1}{n}\sum_{j=1}^na_{j,j}$ for every matrix $a=(a_{i,j})_{1\le i, j\le n}\in M_n(\Cb)$.

Let $\Gamma$ be a discrete amenable group.
The complex group algebra $\Cb\Gamma$ consists of elements in $\Cb^{\Gamma}$ with finite support.
Its multiplication is defined as $(fg)_{\gamma'}=\sum_{\gamma \in \Gamma}f_{\gamma}g_{\gamma^{-1}\gamma'}$ for
all $f, g\in \Cb\Gamma$ and $\gamma \in \Gamma$. We shall also extend this multiplication to the cases like $g\in \Cb[[\Gamma]]$, or
$f\in \Zb\Gamma$ and $g\in (\Rb/\Zb)^{\Gamma}$ whenever it can be defined.
One may identify $\Cb\Gamma$ as a linear subspace of $\ell^2(\Gamma)$ naturally.
For each $f\in \Cb\Gamma$, its left multiplication $g\mapsto fg$ for $g\in \Cb\Gamma$ extends to a bounded linear map of $\ell^2(\Gamma)$.
In this way we shall identify $\Cb\Gamma$ as a subalgebra of $B(\ell^2(\Gamma))$.
It is easily checked that $\Cb\Gamma$ is closed under taking adjoint in $B(\ell^2(\Gamma))$. Explicitly,
$(f^*)_{\gamma}=\overline{f_{\gamma^{-1}}}$ for all $f\in \Cb\Gamma$ and $\gamma\in \Gamma$.
The second class of $C^*$-algebras we need, the left group von Neumann algebra $\cL\Gamma$, is defined as the closure of $\Cb\Gamma$ under the strong operator topology.
Explicitly, $\cL\Gamma$ consists of $T\in B(\ell^2(\Gamma))$ commuting with the right regular representation $\rho$ of $\Gamma$ on $\ell^2(\Gamma)$,
i.e., $(T(h\gamma))_{\gamma'}=(Th)_{\gamma'\gamma}$ for all $h\in \ell^2(\Gamma)$ and $\gamma, \gamma'\in \Gamma$,
where $(h\gamma)_{\gamma''}=h_{\gamma''\gamma}$ for all $\gamma''\in \Gamma$.
The algebra $\cL\Gamma$ has a canonical tracial state $\tr_{\cL\Gamma}$ defined as $\tr_{\cL\Gamma}(a)=\left<ae_{\Gamma}, e_{\Gamma}\right>$.
The trace $\tr_{\cL\Gamma}$ is faithful in the sense that
if $a\in \cL\Gamma$ is positive and $\tr_{\cL\Gamma}(a)=0$, then $a=0$. Throughout this article, we fix this tracial state of $\cL\Gamma$,
and the determinant $\ddet{\cL\Gamma}$ is calculated with respect to it.

Another way to describe the elements of $\cL\Gamma$ is that they are the elements $h$ of $\Cb[[\Gamma]]$ for which the map from
$\Cb\Gamma$ to $\ell^2(\Gamma)$ sending $x$ to $hx$ is well-defined and extends to a bounded linear operator on $\ell^2(\Gamma)$.
It is easy to see that if $h_1$ and $h_2$ are in $\Rb[[\Gamma]]$, then $h_1+ih_2$ is in $\cL\Gamma$ if and only if both $h_1$ and $h_2$ are in
$\cL\Gamma$. It follows that if $h\in \Rb[[\Gamma]]\cap \cL\Gamma$ is invertible in $\cL\Gamma$, then its inverse lies in $\Rb[[\Gamma]]$ and hence preserves $\ell^2_{\Rb}(\Gamma)$.

The third class of $C^*$-algebras we need is the unital commutative $C^*$-algebras. They can be described as unital commutative Banach complex algebras
$A$ with a $*$-operation satisfying $(a^*)^*=a$, $(\lambda a+b)^*=\bar{\lambda}a^*+b^*$, $(ab)^*=b^*a^*$, $\|a^*\|=\|a\|$ and $\|a^*a\|=\|a\|^2$ for all $a, b\in A$
and $\lambda\in \Cb$.

For a tracial state $\tr_A$ of a unital $C^*$-algebra $A$, the {\it Fuglede-Kadison determinant} of an invertible $a\in A$
with respect to $\tr_A$ \cite{FK} is defined as
\begin{equation}
\ddet_{A} a:= \exp(\tr_A \log |a|)=\exp(\frac{1}{2}\tr_A \log (a^*a)),
\end{equation}
where $|a|=(a^*a)^{1/2}$ is the absolute part of $a$. (The Fuglede-Kadison determinant is also defined for noninvertible elements of $A$, but the definition is more involved.) For detail and application of the Fuglede-Kadison determinant to $L^2$-invariants, see
\cite{Luck}.

For any $n\in \Nb$ and any invertible $a\in B(\ell^2_n)$, one
has $\ddet_{B(\ell^2_n)}(a)=|\det a|^{1/n}$.

Among many nice properties of the Fuglede-Kadison determinant, we shall need the following ones:

\begin{theorem}\cite[Lemma 1, Theorem 1]{FK} \label{T-FK}
Let $\tr$ be a tracial state of a unital $C^*$-algebra $A$. Then
\begin{enumerate}
\item for any invertible  $a\in A$, one has $\ddet_{A}(a)=\ddet_{A}(a^*)$;

\item for any $0\le a\le b$ in $A$ with $a$ being invertible in $A$, one has $\ddet_{A}a\le \ddet_{A}b$.
\end{enumerate}
\end{theorem}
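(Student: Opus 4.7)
The plan is to prove the two parts by reducing both to trace computations, exploiting polar decomposition for (1) and an interpolation argument for (2).

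For (1), I would use the polar decomposition of the invertible element. Since $a\in A$ is invertible, we may write $a=u|a|$ with $u\in A$ unitary and $|a|=(a^*a)^{1/2}$ invertible positive. Then $aa^*=u|a|^2u^*$, and the continuous functional calculus applied to the $*$-isomorphism $x\mapsto uxu^*$ yields $\log(aa^*)=u\log(a^*a)u^*$. Taking $\tr_A$ and using the trace property $\tr_A(xy)=\tr_A(yx)$ together with $u^*u=1$, we obtain $\tr_A\log(aa^*)=\tr_A\log(a^*a)$. Exponentiating the halves gives $\ddet_A(a^*)=\ddet_A(a)$.

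For (2), since $a\le b$ with $a$ invertible positive, $b$ is also invertible positive (as $b\ge a\ge \varepsilon\cdot 1$ for some $\varepsilon>0$), so $|a|=a$ and $|b|=b$, and the claim reduces to $\tr_A\log a\le\tr_A\log b$. I would interpolate with the linear path $c_t:=(1-t)a+tb$ for $t\in[0,1]$; then each $c_t\ge a$ is positive and invertible with uniformly bounded inverse, and $\phi(t):=\tr_A\log c_t$ is smooth. The main computation is that
\begin{equation*}
\phi'(t)=\tr_A\bigl(c_t^{-1}(b-a)\bigr).
\end{equation*}
Since both $c_t^{-1}$ and $b-a$ are positive, and since $\tr_A(xy)=\tr_A(x^{1/2}yx^{1/2})\ge 0$ whenever $x,y\ge 0$ (as $x^{1/2}yx^{1/2}\ge 0$ and a tracial state is positive), we get $\phi'(t)\ge 0$, hence $\phi(0)\le\phi(1)$, which is the desired inequality.

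The main technical obstacle is to justify the derivative formula for $\phi$ rigorously in the $C^*$-algebraic setting. I would handle this by means of the integral representation
\begin{equation*}
\log x=\int_0^\infty\Bigl(\frac{1}{1+s}-\frac{1}{x+s}\Bigr)\,ds\qquad(x>0),
\end{equation*}
which, applied via continuous functional calculus to $c_t$, converges in norm. Combined with the resolvent identity $\frac{d}{dt}(c_t+s)^{-1}=-(c_t+s)^{-1}(b-a)(c_t+s)^{-1}$, continuity of $\tr_A$, and the trace property, one can differentiate under the integral to get $\phi'(t)=\tr_A\int_0^\infty (c_t+s)^{-2}(b-a)\,ds=\tr_A(c_t^{-1}(b-a))$. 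Everything else in the argument is elementary.
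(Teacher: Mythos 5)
Your proposal is correct, but note that the paper does not prove this statement at all: it is quoted directly from Fuglede and Kadison's original article (their Lemma 1 and Theorem 1), so there is no internal proof to compare against. Your part (1) is the standard argument: since $a$ is invertible, the polar decomposition $a=u|a|$ with $u=a|a|^{-1}$ stays inside $A$, and conjugation-invariance of the functional calculus plus traciality gives $\tr_A\log(aa^*)=\tr_A\log(a^*a)$. Your part (2) is a genuinely different route from the usual ones: Fuglede--Kadison deduce monotonicity from multiplicativity of the determinant (writing $b=a^{1/2}(a^{-1/2}ba^{-1/2})a^{1/2}$ with middle factor $\ge 1$), and an even shorter argument -- the one implicitly available in this paper, which invokes L\"{o}wner's theorem in the proof of Corollary~\ref{C-monotone} -- is that $\log$ is operator monotone, so $\log a\le\log b$ and one applies positivity of the trace. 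Your interpolation argument $\phi(t)=\tr_A\log((1-t)a+tb)$ with $\phi'(t)=\tr_A\bigl(c_t^{-1}(b-a)\bigr)\ge 0$ avoids both the multiplicativity theorem and L\"{o}wner's theorem, at the cost of the (correctly handled) analytic work: the integral representation of $\log$, the resolvent identity, and domination by $\|b-a\|(\varepsilon+s)^{-2}$ justify differentiating under the integral, and cyclicity of the trace repairs the noncommutativity of $c_t^{-1}$ and $\dot c_t$. One small point worth making explicit is that the paper's definition of a tracial state only posits $\tr_A(1)=1$ and $|\tr_A(a)|\le\|a\|$; positivity, which you use both for $\tr_A(xy)\ge 0$ when $x,y\ge 0$ and (implicitly) for the reality of $\tr_A\log|a|$, follows from the standard fact that a norm-one functional attaining its norm at the identity is a state.
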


\section{Finite Group Case} \label{S-finite group}

In this section we prove Theorem~\ref{T-main} for the case $\Gamma$ is finite. This case is easily proved and appeared
in \cite[Section 7]{Den}.
However, we choose to give a proof of this case here, since it reveals the essence of the equality in Theorem~\ref{T-main}.

The following lemma is well known \cite[Lemma 4]{Solomyak}. For the convenience of the reader, we give  a proof.

\begin{lemma} \label{L-determinant quotient group size}
Let $n\in \Nb$ and let $T:\Cb^n\rightarrow \Cb^n$ be an invertible linear map, preserving $\Zb^n$.
Then $|\det T|=|\Zb^n/T\Zb^n|$.
\end{lemma}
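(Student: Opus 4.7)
The plan is to recognize that the hypothesis forces $T$ to be represented by an integer matrix (since $T\Zb^n \subseteq \Zb^n$ means $T$ sends each standard basis vector to an integer vector), and then to compute $|\det T|$ in two different ways, both equal to the covolume of the sublattice $T\Zb^n$ in $\Rb^n$.

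First I would observe that $T$ restricts to an $\Rb$-linear automorphism of $\Rb^n$ with real (integer) matrix entries, so the complex determinant $\det T$ equals the real determinant of this restriction. Consequently, the Lebesgue measure on $\Rb^n$ scales by the factor $|\det T|$ under $T$. Taking the unit cube $F = [0,1)^n$, which is a fundamental domain for the $\Zb^n$-action on $\Rb^n$ with measure $1$, the image $T(F)$ is a fundamental domain for the action of $T\Zb^n$ on $\Rb^n$, of Lebesgue measure $|\det T|$.

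Next I would produce a second fundamental domain for $T\Zb^n$ by tiling. Since $T\Zb^n$ is a subgroup of $\Zb^n$ of finite index $m := |\Zb^n/T\Zb^n|$ (finite because $T$ is invertible, forcing $T\Zb^n$ to have full rank $n$), choose coset representatives $v_1,\dots,v_m$ of $T\Zb^n$ in $\Zb^n$. Then $\bigsqcup_{j=1}^m (v_j + F)$ is a fundamental domain for $T\Zb^n$ acting on $\Rb^n$, and its Lebesgue measure is $m$. Comparing the two fundamental domains via the translation-invariance of Lebesgue measure yields $|\det T| = m = |\Zb^n/T\Zb^n|$.

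No serious obstacle is expected here; the only mild care needed is to verify that $T\Zb^n$ has finite index in $\Zb^n$ (which follows from $T$ being injective on $\Rb^n$, so that $T\Zb^n$ spans $\Rb^n$ over $\Rb$) and to justify that different fundamental domains for the same lattice share the same Lebesgue measure. An alternative, equally short route would invoke the Smith normal form $A = UDV$ with $U,V\in GL_n(\Zb)$ and $D = \diag(d_1,\ldots,d_n)$: then $|\det T|=\prod|d_i|$ while $\Zb^n/T\Zb^n \cong \bigoplus_i \Zb/d_i\Zb$ also has order $\prod|d_i|$.
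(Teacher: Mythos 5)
Your proposal is correct, but your main argument takes a genuinely different route from the paper's. The paper argues algebraically: by the elementary divisor theorem there is a basis $e_1,\dots,e_n$ of $\Zb^n$ and nonzero integers $k_1,\dots,k_n$ such that $k_1e_1,\dots,k_ne_n$ is a basis of $T\Zb^n$; writing the matrix of $T$ in this basis as $\diag(k_1,\dots,k_n)\cdot Q$ with $Q\in\GL_n(\Zb)$ gives $|\det T|=\prod_j|k_j|=|\Zb^n/T\Zb^n|$ in one stroke. Your covolume argument instead compares two fundamental domains for the lattice $T\Zb^n$: the image $T([0,1)^n)$, of Lebesgue measure $|\det T|$ (using that $T$ has an integer, hence real, matrix so its complex determinant is the real one), and the union of $m=|\Zb^n/T\Zb^n|$ unit-cube translates indexed by coset representatives, of measure $m$; this is valid provided you supply the standard cut-and-translate argument showing that any two measurable fundamental domains of the same lattice have equal measure, and the finite-index claim you flag is indeed immediate from $T\Zb^n$ having full rank. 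What each approach buys: the paper's is purely algebraic and self-contained modulo the structure theory of finitely generated abelian groups, while yours trades that structure theory for elementary Lebesgue-measure facts and is perhaps more geometrically transparent. Your alternative route via the Smith normal form $A=UDV$ is essentially the paper's proof in matrix form, so only your primary argument constitutes a genuinely different proof.
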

\begin{proof}
Note that  $T\Zb^n$ has rank $n$.
By the elementary divisor theorem \cite[Theorem III.7.8]{Lang}
there are a basis $e_1, \dots, e_n$ of $\Zb^n$ and nonzero integers $k_1, \dots, k_n$ such that
$k_1e_1, \dots, k_ne_n$ is a basis of $T\Zb^n$. Since $Te_1, \dots, Te_n$ is also a basis of
$T\Zb^n$, there exists $Q\in \GL_n(\Zb)$ with $(Te_1, \dots, Te_n)=(k_1e_1, \dots, k_ne_n)Q$.
Then the matrix of $T$ under the basis
$e_1, \dots, e_n$ is $\diag(k_1, \dots, k_n)\cdot Q$. Thus
\begin{eqnarray}
|\det T|&=&|\det (\diag(k_1, \dots, k_n)\cdot Q)|=|\prod_{1\le j\le n}k_j|=|\Zb^n/T\Zb^n|.
\end{eqnarray}
\end{proof}


Let $\Gamma$ be a discrete amenable group and let $f\in \Zb\Gamma$.
The canonical pairing between $\Zb\Gamma$ and its Pontryagin dual $\widehat{\Zb\Gamma}=(\Rb/\Zb)^{\Gamma}$ is given by
$$ \left< g, h\right>=\sum_{\gamma \in \Gamma}g_{\gamma}h_{\gamma}$$
for all $g\in \Zb\Gamma$ and $h\in (\Rb/\Zb)^{\Gamma}$.
It is easy to check that
$$ \left< gf, h\right>=\left<g, hf^*\right>$$
for all $g\in \Zb\Gamma$ and $h\in (\Rb/\Zb)^{\Gamma}$.
It follows that
$X_f=\{h\in (\Rb/\Zb)^{\Gamma}: hf^*=0\}$ and $\alpha_f$ is the restriction of the left shift action of $\Gamma$
on $(\Rb/\Zb)^{\Gamma}$ to $X_f$. For $h\in (\Rb/\Zb)^{\Gamma}$, denote by $\tilde{h}$ the ``adjoint" element in $(\Rb/\Zb)^{\Gamma}$ defined as
$\tilde{h}_{\gamma}=h_{\gamma^{-1}}$ for all $\gamma\in \Gamma$. Note that the map $h\mapsto \tilde{h}$ is an automorphism of the compact group
$(\Rb/\Zb)^{\Gamma}$, and intertwines the left and right shift actions of $\Gamma$. The image of $X_f$ under this map is $\{h\in (\Rb/\Zb)^{\Gamma}:fh=0\}$. In the rest of this paper, we shall write
\begin{eqnarray} \label{E-X}
 X_f= \{h\in (\Rb/\Zb)^{\Gamma}:fh=0\},
\end{eqnarray}
and under this identification $\alpha_f$ is the restriction of the right shift action of $\Gamma$ on $(\Rb/\Zb)^{\Gamma}$ to $X_f$.

\begin{theorem} \label{T-finite}
Let $\Gamma$ be a finite group and let $f\in \Zb\Gamma$ be invertible in $\cL\Gamma$. Then
$$ \rh(\alpha_f)=\frac{1}{|\Gamma|}\log |X_f|=\frac{1}{|\Gamma|}\log |\Zb\Gamma/f\Zb\Gamma|=\frac{1}{|\Gamma|}\log |\det f|=\log \ddet_{\cL\Gamma} f.$$
\end{theorem}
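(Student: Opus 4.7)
The plan is to verify the four equalities by reducing everything to finite-dimensional linear algebra, exploiting that for finite $\Gamma$ the von Neumann algebra $\cL\Gamma$ is just $\Cb\Gamma$ and that $X_f$ is a finite abelian group.

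First I would examine the formula $X_f = \{h \in (\Rb/\Zb)^\Gamma : fh = 0\}$. Since $\Gamma$ is finite and $f$ is invertible in $\cL\Gamma = \Cb\Gamma$, left multiplication $T_f \colon h \mapsto fh$ is a linear automorphism of $\Cb^\Gamma$; it preserves $\Rb^\Gamma$ and, since $f \in \Zb\Gamma$, it carries $\Zb^\Gamma = \Zb\Gamma$ into itself with image $f\Zb\Gamma$. Lifting $X_f$ to $\Rb^\Gamma$ gives $X_f \cong T_f^{-1}(\Zb^\Gamma)/\Zb^\Gamma$, and applying the isomorphism $T_f$ produces $X_f \cong \Zb\Gamma/f\Zb\Gamma$. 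In particular $|X_f|$ is finite, and Lemma~\ref{L-determinant quotient group size} applied to $T_f$ yields $|\Zb\Gamma/f\Zb\Gamma| = |\det T_f| = |\det f|$, establishing the middle two equalities.

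Next I would compute the entropy. Since $X_f$ is a finite Hausdorff space it is discrete, and the partition into points $\cU_0 = \{\{x\} : x \in X_f\}$ is a finite open cover that is $\Gamma$-invariant, so $\cU_0^F = \cU_0$ and $N(\cU_0^F) = |X_f|$ for every nonempty finite $F \subseteq \Gamma$; on the other hand $N(\cV^F) \le |X_f|$ trivially for any finite open cover $\cV$. The Ornstein-Weiss limit collapses to the value at $F = \Gamma$: taking $K = \Gamma$ and $\delta < 1$ in the lemma's criterion forces $F = \Gamma$, so $\htopol(\alpha_f, \cU) = \frac{1}{|\Gamma|}\log N(\cU^\Gamma)$ for each finite open cover $\cU$. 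The supremum is attained at $\cU_0$, giving $\htopol(\alpha_f) = \frac{1}{|\Gamma|}\log|X_f|$, which equals $\rh(\alpha_f)$ by Deninger's theorem recalled in the preliminaries. This is the first equality.

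For the last equality I would check that $\tr_{\cL\Gamma}$ coincides with the normalized matrix trace on $B(\ell^2(\Gamma)) \cong B(\ell^2_{|\Gamma|})$: for $a = \sum_\gamma a_\gamma \gamma \in \Cb\Gamma$, the matrix of $T_a$ in the orthonormal basis $\{\gamma\}_{\gamma \in \Gamma}$ has $(\gamma',\gamma)$-entry $a_{\gamma'\gamma^{-1}}$, so every diagonal entry equals $a_{e_\Gamma} = \langle a e_\Gamma, e_\Gamma\rangle = \tr_{\cL\Gamma}(a)$. Applying the excerpt's formula $\ddet_{B(\ell^2_n)}(a) = |\det a|^{1/n}$ with $n = |\Gamma|$ to $f$ then yields $\log \ddet_{\cL\Gamma} f = \frac{1}{|\Gamma|}\log|\det f|$, completing the chain. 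There is no substantive obstacle in this argument; the only point deserving care is the collapse of the F\o lner limit when $\Gamma$ is finite, which is handled by the explicit choice of $K$ and $\delta$ above.
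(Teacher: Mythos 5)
Your argument is correct and follows essentially the same route as the paper: $\rh(\alpha_f)=\frac{1}{|\Gamma|}\log|X_f|$ from the definition, the isomorphism $X_f\cong\Zb\Gamma/f\Zb\Gamma$ obtained by lifting to $\Rb\Gamma$ and applying left multiplication by $f$, Lemma~\ref{L-determinant quotient group size}, and the observation that the normalized trace of $B(\ell^2(\Gamma))$ restricts to $\tr_{\cL\Gamma}$. The only blemish is the justification of the F{\o}lner collapse: Proposition~\ref{subadditive:prop} merely asserts the existence of some pair $(K,\delta)$, and in any case $K=\Gamma$ with $\delta<1$ does not force $F=\Gamma$ (a proper subset with $|\Gamma\setminus F|\le\delta|F|$ can occur); the correct one-line fix is to note that $F=\Gamma$ satisfies $|K\Gamma\Delta\Gamma|=0\le\delta|\Gamma|$ for every $(K,\delta)$, so the limit $b$ must equal $\frac{1}{|\Gamma|}\varphi(\Gamma)$.
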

\begin{proof} From the definition of topological entropy, we have $\rh(\alpha_f)=\frac{1}{|\Gamma|}\log |X_f|$.

Note that both $X_f$ and $\Zb\Gamma/ f\Zb\Gamma$ are abelian groups. We claim that they are isomorphic.
Writing $(\Rb/\Zb)^{\Gamma}$ as $\Rb \Gamma/\Zb \Gamma$, we may identify $X_f$ with $\{g\in \Rb\Gamma: fg\in \Zb\Gamma\}/\Zb\Gamma$.
Since the left multiplication by $f$ restricts to a group automorphism of  $\Rb\Gamma$ and sends $\Zb\Gamma$ onto $f\Zb\Gamma$, the claim is proved.
It follows that $|X_f|= |\Zb\Gamma/f\Zb\Gamma|$.

By Lemma~\ref{L-determinant quotient group size} one has $ |\Zb\Gamma/f\Zb\Gamma|=|\det f|$.

Note that the unique tracial state of $B(\ell^2(\Gamma))$ restricts to the canonical trace of $\cL\Gamma$.
Thus $\ddet_{\cL\Gamma}f=\ddet_{B(\ell^2(\Gamma))}f=|\det f|^{\frac{1}{|\Gamma|}}$.
\end{proof}

\begin{notation} \label{N-compression}
For any
nonempty finite subset $F$ of $\Gamma$, denote by $p_F$
the restriction map $\Cb[[\Gamma]]\rightarrow \Cb[F]$, and by $\iota_F$ the embedding $\Cb[F]\rightarrow \ell^2(\Gamma)$. For $f\in \cL\Gamma$, set
$f_F:=p_F\circ f\circ \iota_F\in B(\Cb[F])$.
\end{notation}

Now consider the case $\Gamma$ is infinite countable.
Let $\{F_n\}_{n\in \Nb}$ be a (left) F{\o}lner sequence of $\Gamma$, and let
$f\in \Zb\Gamma$ be invertible in $\cL\Gamma$.
Since $F_n$ is the analogue of a finite group,
 the analogue of Theorem~\ref{T-finite} is
\begin{eqnarray*}
\rh(\alpha_f)&=&\lim_{\varepsilon\to 0}\frac{1}{|F_n|}\log s_{F_n, \infty}(\varepsilon)=\frac{1}{|F_n|}\log |\Zb[F_n]/f_{F_n}\Zb[F_n]|\\
\nonumber &=&\frac{1}{|F_n|}\log |\det f_{F_n}|=\log \ddet_{\cL\Gamma} f
\end{eqnarray*}
for each $n\in \Nb$,  where $s_{F_n, \infty}(\varepsilon)$ is the cardinality of certain set resembling $X_f$ restricted to $F_n$ and will
be defined at the beginning of Section~\ref{S-positive}. On the other hand, $F_n$ approximates $\Gamma$ as $n\to \infty$. Thus a more precise
and reasonable analogue of Theorem~\ref{T-finite} is
\begin{eqnarray} \label{E-intuition}
\rh(\alpha_f)&=&\lim_{\varepsilon\to 0}\lim_{n\to \infty}\frac{1}{|F_n|}\log s_{F_n, \infty}(\varepsilon)=\lim_{n\to \infty}\frac{1}{|F_n|}\log |\Zb[F_n]/f_{F_n}\Zb[F_n]|\\
\nonumber &=&\lim_{n\to \infty}\frac{1}{|F_n|}\log |\det f_{F_n}|=\log \ddet_{\cL\Gamma} f.
\end{eqnarray}
Indeed, this is the intuition behind Theorem~\ref{T-main}. But there is some immediate difficulty even for making sense of (\ref{E-intuition}). For instance, $f_{F_n}$ may fail to be invertible. In such case, $|\Zb[F_n]/f_{F_n}\Zb[F_n]|=\infty$ and $\det f_{F_n}=0$.

\section{$\ell^p$-version of R. Bowen's Definition of Topological Entropy} \label{S-Bowen}

In this section we prove Theorem~\ref{T-sep}, providing an $\ell^p$-version
of R. Bowen's definition of topological entropy. Throughout this section $\Gamma$ is a discrete amenable group.

Let $\alpha$ be an action of $\Gamma$ on a compact Hausdorff space $X$ by homeomorphisms.
Recall that a {\it continuous pseudometric} on $X$ is a symmetric continuous map $X\times X\rightarrow \Rb_+$,
vanishing on the diagonal of $X\times X$ and satisfying the triangle inequality.
Denote by $\cM$ the set of all continuous pseudometrics on $X$.
Let $\vartheta \in \cM$.
For a nonempty
finite subset $F\subseteq \Gamma$, $1\le p\le \infty$ and $x, y\in
X$, denote by $d_{\vartheta, F, p}(x, y)$ the quotient of the $\ell^p$-norm of the function
$\gamma\mapsto \vartheta(\gamma x, \gamma y)$ on $F$ divided by
$|F|^{1/p}$. We say
that $E\subseteq X$ is {\it $[\vartheta, F, p, \varepsilon]$-separated} if
for any $x\neq y$ in $E$, $d_{\vartheta, F, p}(x, y)> \varepsilon$.
We say that $E\subseteq X$ is {\it $[\vartheta, F, p, \varepsilon]$-spanning}
if for any $x\in X$, there is some $y\in E$ with $d_{\vartheta, F, p}(x,
y)\le \varepsilon$. Denote by $s_{\vartheta, F, p}(\varepsilon)$
($r_{\vartheta, F, p}(\varepsilon)$ resp.) the maximal (minimal resp.)
cardinality of $[\vartheta, F, p, \varepsilon]$-separated ($[\vartheta, F, p,
\varepsilon]$-spanning resp.) subsets of $X$.

\begin{lemma} \label{compare:lemma}
Let $\alpha$ be an action of $\Gamma$ on a compact Hausdorff space $X$ by homeomorphisms.
Let $\vartheta$ be a continuous pseudometric of $X$.
For any $\varepsilon>0$, $\lambda>1$, and $1\le p<\infty$,
there exists some $\varepsilon'>0$ such that
$\lambda^{|F|}s_{\vartheta, F, p}(\varepsilon')\ge s_{\vartheta, F, \infty}(\varepsilon)$
for all nonempty finite subsets $F$ of $\Gamma$.
\end{lemma}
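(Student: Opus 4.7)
The strategy is to compare the two separation numbers by a covering-plus-counting argument. Given a maximal $[\vartheta,F,\infty,\varepsilon]$-separated set $E$, I would pick a maximal $[\vartheta,F,p,\varepsilon']$-separated subset $E'\subseteq E$; by maximality, $E'$ is automatically $[\vartheta,F,p,\varepsilon']$-spanning inside $E$, so writing $N(y):=\{x\in E:d_{\vartheta,F,p}(x,y)\le\varepsilon'\}$ for $y\in E'$ gives
\begin{equation*}
s_{\vartheta,F,\infty}(\varepsilon)=|E|\le|E'|\cdot\max_{y\in E'}|N(y)|\le s_{\vartheta,F,p}(\varepsilon')\cdot\sup_{y\in X}|N(y)|.
\end{equation*}
Thus the lemma reduces to finding $\varepsilon'>0$ for which $|N(y)|\le\lambda^{|F|}$ uniformly in $F$ and $y$.

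To bound $|N(y)|$, I would discretise $X$ along the orbit. Fix $\delta<\varepsilon/4$ and a finite Borel partition $\{P_1,\dots,P_L\}$ of $X$ whose members have $\vartheta$-diameter at most $\delta$, and code $x\in X$ by $\omega(x)\in\{1,\dots,L\}^F$, $\omega_\gamma(x)=j$ iff $\gamma x\in P_j$. Because $\delta<\varepsilon/4$, distinct points of $E$ have distinct codewords, so $|N(y)|$ equals the number of codewords appearing among $\{\omega(x):x\in N(y)\}$. The $\ell^p$ constraint $\sum_\gamma\vartheta(\gamma x,\gamma y)^p\le\varepsilon'^p|F|$ together with Markov's inequality forces the deviant set $T_x:=\{\gamma\in F:\vartheta(\gamma x,\gamma y)>\delta\}$ to satisfy $|T_x|\le\alpha|F|$ with $\alpha=(\varepsilon'/\delta)^p$; off $T_x$ the translate $\gamma x$ sits in the $\delta$-neighbourhood of $P_{\omega_\gamma(y)}$, so $\omega_\gamma(x)$ is constrained to a bounded family of neighbouring cells of size at most $M$, with $M$ depending only on the partition. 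A Stirling-type count then yields
\begin{equation*}
|N(y)|\le(\alpha|F|+1)\cdot 2^{|F|H(\alpha)}\cdot L^{\alpha|F|}\cdot M^{(1-\alpha)|F|},
\end{equation*}
where $H$ is the binary entropy function.

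The hardest part will be tuning the parameters so that the exponent $H(\alpha)\log 2+\alpha\log L+(1-\alpha)\log M$ stays below $\log\lambda$. As $\alpha\to 0$ the dominant term is $\log M$, so the crux is to choose the partition in such a way that $M$ is comparable to $\lambda$. My plan for this is a two-scale construction: first select a preliminary partition of $X$ whose cells are well separated in $\vartheta$ at scale $\delta$, then refine inside each cell to bring the diameter below $\delta$; this keeps the local complexity $M$ down to its combinatorial minimum. Once $M$ is under control, I would take $\alpha$ small and set $\varepsilon'=\delta\alpha^{1/p}$ small enough so that all three terms in the exponent are dominated by $\log\lambda$, absorbing the polynomial factor $(\alpha|F|+1)$ by a further slight reduction of $\varepsilon'$. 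The resulting $\varepsilon'$ depends only on $\varepsilon$, $\lambda$, $p$, and $\vartheta$, as required.
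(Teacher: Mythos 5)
Your first reduction is sound and matches the paper's structure: a maximal $[\vartheta,F,p,\varepsilon']$-separated subset $E'$ of $E$ gives $s_{\vartheta,F,\infty}(\varepsilon)=|E|\le s_{\vartheta,F,p}(\varepsilon')\cdot\max_y|N(y)|$, so everything hinges on showing $|N(y)|\le\lambda^{|F|}$. The gap is in your count of $|N(y)|$. Your codeword bound carries the factor $M^{(1-\alpha)|F|}$, where $M$ is the number of partition cells meeting the $\delta$-neighbourhood of a given cell, and this factor does not shrink as $\alpha\to 0$. For any connected $X$ (say $X=\Rb/\Zb$ with the usual metric, which is exactly the situation the lemma is used in), every finite partition into sets of $\vartheta$-diameter $\le\delta$ has some cell whose $\delta$-neighbourhood meets another cell, so $M\ge 2$ no matter how cleverly you build the partition; your proposed two-scale construction cannot change this. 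Since the lemma must hold for every $\lambda>1$, in particular $\lambda<2$, the exponent $(1-\alpha)\log M\ge(1-\alpha)\log 2$ already exceeds $\log\lambda$, and the argument as written cannot close.

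The missing idea is that you should not try to code the coordinates where $\gamma x$ is close to $\gamma y$ at all; instead, exploit the fact that the points of $N(y)$ are mutually $[\vartheta,F,\infty,\varepsilon]$-separated. By Markov each $x\in N(y)$ has a deviant set $T_x\subseteq F$ with $|T_x|\le\alpha|F|$, and by pigeonhole over the at most $\binom{|F|}{\lfloor\alpha|F|\rfloor}$ possible deviant sets a proportion at least $\binom{|F|}{\lfloor\alpha|F|\rfloor}^{-1}$ of $N(y)$ shares one common set $T$. Off $T$, any two such points are within $2\delta\le\varepsilon$ of each other (both being $\delta$-close to the orbit of $y$ there), so their $\varepsilon$-separation forces a discrepancy larger than $\varepsilon$ at some $\gamma\in T$; hence they are distinguished by which member of a fixed cover of $X$ by $L$ balls of radius $\varepsilon/2$ the points $\gamma x$, $\gamma\in T$, fall into. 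This bounds the subfamily by $L^{|T|}\le L^{\alpha|F|}$, so $|N(y)|\le\binom{|F|}{\lfloor\alpha|F|\rfloor}L^{\alpha|F|}$, and since $L$ is fixed before $\alpha$ is chosen, taking $\alpha$ small (via Stirling, $\binom{n}{\alpha n}\le\lambda^{n/2}$ and $L^{\alpha}\le\lambda^{1/2}$) and $\varepsilon'=\alpha^{1/p}\varepsilon/2$ gives $|N(y)|\le\lambda^{|F|}$. This is precisely the paper's argument: the exponential ambiguity is confined to the small deviant set, where its base $L$ is tamed by the free parameter $\alpha$, rather than spread over all of $F$ with an irreducible base $\ge 2$.
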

\begin{proof}
Cover $X$ by finitely many, say $M$, closed $\vartheta$-balls of radius $\varepsilon/2$.
By Stirling's formula there is some $c\in (0, 1/2)$ such that
$\binom{n}{cn}\le \lambda^{n/2}$ for all $n\in \Nb$.
We may assume that $M^c\le \lambda^{1/2}$.
Set
$\varepsilon'=c^{\frac{1}{p}}\varepsilon/2$.

Let $F$ be a nonempty finite subset of $\Gamma$ and let $E$ be a
$[\vartheta, F, \infty, \varepsilon]$-separated subset of $X$ with $|E|=s_{\vartheta, F,
\infty}(\varepsilon)$. For each $x\in E$ denote by $B(x,
\varepsilon/2)$ the set of elements $y$ in $E$ such that
$|\{\gamma \in F: \vartheta(\gamma x, \gamma y)>
\varepsilon/2\}|<c|F|$.
If $x$ and  $y$ are in $E$ and $y\not\in B(x, \varepsilon/2)$, then
$$ d_{\vartheta, F, p}(x, y)>\frac{((\varepsilon/2)^pc|F|)^{1/p}}{|F|^{1/p}}=(\varepsilon/2)c^{1/p}=\varepsilon'.$$
Take a subset $E'$ of $E$ maximal with
respect to the property that for any $x\neq y$ in $E'$, $y\notin
B(x, \varepsilon/2)$. Then $\bigcup_{x\in E'}B(x, \varepsilon/2)=E$
and $E'$ is $[\vartheta, F, p, \varepsilon']$-separated. Denote by $D$ the
maximum of $|B(x, \varepsilon/2)|$ over all $x\in E$. Then
$D|E'|\ge |E|$. Thus it suffices to show that $\lambda^{|F|}\ge
D$.

Fix $x\in E$. For any $y\in B(x, \varepsilon/2)$ there is some
$K_y\subseteq F$ with $|K_y|=\lfloor c|F|\rfloor$ and
$\vartheta(\gamma x, \gamma y)\le \varepsilon/2$ for all
$\gamma \in F\setminus K_y$, where $\lfloor t\rfloor$ denotes the
largest integer no bigger than $t$. Then there are a subset $B'$
of $B(x, \varepsilon/2)$ with $|B'|\ge |B(x,
\varepsilon/2)|/\binom{|F|}{c|F|}$ and a subset $K$ of $F$ with
$|K|=\lfloor c|F|\rfloor$ such that $K_y=K$ for all $y\in B'$.
Then $\vartheta(\gamma y, \gamma z)\le \vartheta(\gamma y, \gamma x)+\vartheta(\gamma x, \gamma z)\le \varepsilon$ for all $y, z\in B'$ and $\gamma \in F\setminus K$.
Note that, as a subset of $E$, $B'$ is $[\vartheta, F, \infty, \varepsilon]$-separated.
It
follows that for any $y\neq z$ in $B'$ there is some $\gamma$ in
$K$ with $\vartheta(\gamma y, \gamma z)> \varepsilon$. Then
$\gamma y$ and $\gamma z$ must lie in different closed balls
which we take at the beginning of the proof. Consequently,
$|B'|\le M^{|K|}$. Therefore
$$ |B(x, \varepsilon/2)|\le |B'|\binom{|F|}{c|F|}\le
M^{c|F|}\lambda^{|F|/2}\le \lambda^{|F|}.$$
This finishes the proof of the lemma.
\end{proof}

We say that an open subset $U$ of $X$ is {\it generated by $\vartheta$} if $U$ is in the weakest
topology of $X$ making $\vartheta$ continuous, i.e., $U$ is a union of open $\vartheta$-balls with
positive radii.
We say that a finite open cover $\cU=\{U_1, \dots, U_n\}$ of $X$ is {\it generated by
$\vartheta$} if each $U_j$ is generated by $\vartheta$.
For any nonempty finite subset $F$ of $\Gamma$, we define $\vartheta^F\in \cM$ by setting
$\vartheta^F(x, y)=\max_{\gamma \in F}\vartheta(\gamma x, \gamma y)$ for all $x, y\in X$.
We say that an open subset $U$ of $X$ is {\it generated by $\vartheta$ under $\alpha$}
if
$U$ is contained in the weakest topology on $X$ making all the pseudometrics $(x, y)\mapsto \vartheta(\gamma x, \gamma y)$ continuous,
equivalently, $U$ is a union of open sets $U_F$ generated by $\vartheta^F$ for $F$ running over nonempty
finite subsets of $\Gamma$. We say that the topology of $X$ is {\it generated by $\vartheta$ under $\alpha$}
if the topology on $X$ is exactly the weakest topology making all the pseudometrics $(x, y)\mapsto \vartheta(\gamma x, \gamma y)$ continuous.
Having zero $\vartheta$-distance is an equivalence relation on $X$.
For $x\in X$ denote by $[x]$ its equivalence class.
Denote by $X_{\vartheta}$ the quotient space of $X$ consisting of all such equivalence classes, equipped with the quotient topology.
Then $\vartheta$ induces a metric on $X_{\vartheta}$. Equip $(X_{\vartheta})^{\Gamma}$ with the right shift action of $\Gamma$.
It is easy to see that the topology of $X$ is generated by $\vartheta$ under $\alpha$
if and only
if the natural $\Gamma$-equivariant continuous map $X\rightarrow (X_{\vartheta})^{\Gamma}$ sending
 $x$ to $\gamma\mapsto [\gamma x]$ is an embedding, if and only if any two points $x$ and $y$ of $X$ are equal exactly when $\vartheta(\gamma x, \gamma y)=0$ for all $\gamma \in \Gamma$.
We say that a finite open cover $\cU=\{U_1, \dots, U_n\}$ of $X$ is {\it generated by
$\vartheta$ under $\alpha$}
if each $U_j$ is so.

The case $p=\infty$ and $\Gamma=\Zb^d$ of the following theorem was proved by Schmidt \cite[Proposition 13.7]{Sch},
and the case $p=\infty$ for general $\Gamma$ was proved by
Deninger \cite[Proposition 2.3]{Den}.
For completeness we include also a proof for the case $p=\infty$ here.

\begin{theorem} \label{T-sep}
Let $\alpha$ be an action of $\Gamma$ on a compact Hausdorff space $X$ by homeomorphisms.
Let $\vartheta$ be a continuous pseudometric of $X$.
Let $\{F_n\}_{n\in J}$ be a (left) F{\o}lner net of
$\Gamma$.
For any $1\le p \le \infty$, we have
\begin{eqnarray*}
 \sup_{\cU}\htopol(\alpha, \cU)&=&\lim_{\varepsilon \to 0}\limsup_{n\to \infty}\frac{1}{|F_n|}
\log s_{\vartheta, F_n, p}(\varepsilon)
=\lim_{\varepsilon \to 0}\liminf_{n\to \infty}\frac{1}{|F_n|}
\log s_{\vartheta, F_n, p}(\varepsilon)\\
&=&  \lim_{\varepsilon \to 0}\limsup_{n\to \infty}\frac{1}{|F_n|}
\log r_{\vartheta, F_n, p}(\varepsilon)
=\lim_{\varepsilon \to 0}\liminf_{n\to \infty}\frac{1}{|F_n|}
\log r_{\vartheta, F_n, p}(\varepsilon),
\end{eqnarray*}
where $\cU$ runs through all finite open covers of $X$ generated by $\vartheta$ under $\alpha$.
In particular, if the topology of $X$ is generated by $\vartheta$ under $\alpha$,
then we have
\begin{eqnarray*}
 \htopol(\alpha)&=&\lim_{\varepsilon \to 0}\limsup_{n\to \infty}\frac{1}{|F_n|}
\log s_{\vartheta, F_n, p}(\varepsilon)
=\lim_{\varepsilon \to 0}\liminf_{n\to \infty}\frac{1}{|F_n|}
\log s_{\vartheta, F_n, p}(\varepsilon)\\
&=&  \lim_{\varepsilon \to 0}\limsup_{n\to \infty}\frac{1}{|F_n|}
\log r_{\vartheta, F_n, p}(\varepsilon)
=\lim_{\varepsilon \to 0}\liminf_{n\to \infty}\frac{1}{|F_n|}
\log r_{\vartheta, F_n, p}(\varepsilon).
\end{eqnarray*}
\end{theorem}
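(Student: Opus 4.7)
My plan is to first establish the $p = \infty$ case (essentially \cite[Proposition~2.3]{Den}) and then to reduce the general $1 \le p < \infty$ case to it via Lemma~\ref{compare:lemma}. The ``in particular'' statement will follow, since whenever the topology of $X$ is generated by $\vartheta$ under $\alpha$ every finite open cover of $X$ is refined by one generated by $\vartheta$ under $\alpha$, so that the supremum over the latter equals $\htopol(\alpha)$.

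In the $p = \infty$ case the upper bound is routine. Given $\varepsilon > 0$, compactness of $X$ supplies a finite open cover $\cU_\varepsilon$ of $X$ by sets of $\vartheta$-diameter less than $\varepsilon$ (generated by $\vartheta$, hence by $\vartheta$ under $\alpha$); each member of $\cU_\varepsilon^{F_n}$ has $d_{\vartheta, F_n, \infty}$-diameter less than $\varepsilon$, so $s_{\vartheta, F_n, \infty}(\varepsilon) \le N(\cU_\varepsilon^{F_n})$. Dividing by $|F_n|$, taking $\limsup_n$, and invoking Proposition~\ref{subadditive:prop} yields $\limsup_n \frac{1}{|F_n|}\log s_{\vartheta, F_n, \infty}(\varepsilon) \le \htopol(\alpha, \cU_\varepsilon) \le \sup_\cU \htopol(\alpha, \cU)$.

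The lower bound carries the main technical point. Given a finite open cover $\cU$ generated by $\vartheta$ under $\alpha$, a Lebesgue-number-type compactness argument produces a finite $F_0 \ni e_\Gamma$ and a $\delta > 0$ such that every $\vartheta^{F_0}$-ball of radius $\delta$ lies in some $U \in \cU$; a direct computation then gives $N(\cU^{F_n}) \le r_{\vartheta, F_0 F_n, \infty}(\varepsilon)$ for all $0 < \varepsilon \le \delta$. The main obstacle is replacing $F_0 F_n$ by $F_n$ on the right without destroying the F\o{}lner averaging. I will do this via a product-covering argument: let $M$ be the minimum number of $\vartheta$-balls of radius $\varepsilon/4$ needed to cover $X$, and let $E_0$ be a $[\vartheta, F_n, \infty, \varepsilon/4]$-spanning set of cardinality $r_{\vartheta, F_n, \infty}(\varepsilon/4)$; for each $y \in E_0$ and each map $\iota : F_0 F_n \setminus F_n \to \{1, \dots, M\}$ pick (when the corresponding bucket is nonempty) a representative $x$ satisfying $\vartheta(\gamma x, \gamma y) \le \varepsilon/4$ for all $\gamma \in F_n$ and $\gamma x \in B_{\iota(\gamma)}$ for all $\gamma \in F_0 F_n \setminus F_n$. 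A triangle-inequality check shows that the resulting representatives form a $[\vartheta, F_0 F_n, \infty, \varepsilon/2]$-spanning set, yielding $r_{\vartheta, F_0 F_n, \infty}(\varepsilon) \le r_{\vartheta, F_0 F_n, \infty}(\varepsilon/2) \le M^{|F_0 F_n \setminus F_n|} \cdot r_{\vartheta, F_n, \infty}(\varepsilon/4)$. Since $|F_0 F_n \setminus F_n|/|F_n| \to 0$ by the F\o{}lner condition, dividing by $|F_n|$ and passing to $\liminf_n$ gives $\htopol(\alpha, \cU) \le \liminf_n \frac{1}{|F_n|}\log r_{\vartheta, F_n, \infty}(\varepsilon/4)$. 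Sending $\varepsilon \to 0$, taking supremum over $\cU$, and combining with the standard chain $r_{\vartheta, F, \infty}(\varepsilon) \le s_{\vartheta, F, \infty}(\varepsilon) \le r_{\vartheta, F, \infty}(\varepsilon/2)$ pins all four $p = \infty$ limits to $\sup_\cU \htopol(\alpha, \cU)$.

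For general $1 \le p < \infty$, the inequality $d_{\vartheta, F, p} \le d_{\vartheta, F, \infty}$ immediately gives $s_{\vartheta, F, p}(\varepsilon) \le s_{\vartheta, F, \infty}(\varepsilon)$ and $r_{\vartheta, F, p}(\varepsilon) \le r_{\vartheta, F, \infty}(\varepsilon)$. In the reverse direction, Lemma~\ref{compare:lemma} supplies, for every $\lambda > 1$ and $\varepsilon > 0$, an $\varepsilon' \in (0, \varepsilon/2]$ with $\log \lambda + \frac{1}{|F_n|}\log s_{\vartheta, F_n, p}(\varepsilon') \ge \frac{1}{|F_n|}\log s_{\vartheta, F_n, \infty}(\varepsilon)$; taking $\limsup_n$ or $\liminf_n$, then $\varepsilon \to 0$ (which forces $\varepsilon' \to 0$), and finally $\lambda \to 1^+$ identifies the $s_p$-limits with the $s_\infty$-limits. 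The analogous identification for $r$ follows by sandwiching $r_p(\varepsilon) \le s_p(\varepsilon) \le s_\infty(\varepsilon)$ with $s_\infty(\varepsilon) \le \lambda^{|F|} s_p(\varepsilon') \le \lambda^{|F|} r_p(\varepsilon'/2)$.
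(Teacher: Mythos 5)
Your proposal is correct and follows essentially the same route as the paper: the $p=\infty$ case is handled exactly as in \cite[Proposition 2.3]{Den} (small-$\vartheta$-diameter covers for the upper bound; refinement to a $\vartheta^{F_0}$-generated cover plus a Lebesgue number and the $M^{|F_0F_n\setminus F_n|}$ product-covering estimate, which is precisely the paper's inequality $M^{|KF\setminus F|}r_{\vartheta,F,\infty}(\varepsilon)\ge r_{\vartheta,KF,\infty}(2\varepsilon)\ge N(\cU^F)$, for the lower bound), and the case $1\le p<\infty$ is reduced to $p=\infty$ via Lemma~\ref{compare:lemma} together with the standard inequalities $s_p\le s_\infty$ and $r_p\le s_p\le r_p(\cdot/2)$, just as in the paper. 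The only differences are expository (you spell out the bucket argument that the paper states in one line), so no changes are needed.
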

\begin{proof}
We prove first the theorem for $p=\infty$. Note that
$$r_{\vartheta, F, \infty}(\varepsilon)\le s_{\vartheta, F, \infty}(\varepsilon)\le r_{\vartheta, F,
\infty}(\varepsilon/2).$$
Thus it suffices to
show that $\sup_{\cU}\htopol(\alpha, \cU)\ge \lim_{\varepsilon \to 0}\limsup_{n\to \infty}\frac{1}{|F_n|}
\log s_{\vartheta, F_n, \infty}(\varepsilon)$
and $\sup_{\cU}\htopol(\alpha, \cU)\le \lim_{\varepsilon \to 0}\liminf_{n\to \infty}\frac{1}{|F_n|}
\log s_{\vartheta, F_n, \infty}(\varepsilon)$.

Let $\varepsilon>0$. Take a finite open cover $\cU$ of $X$ consisting of
open $\vartheta$-balls with radius $\varepsilon/2$.
Then $\cU$ is generated by $\vartheta$.
We have $s_{\vartheta, F, \infty}(\varepsilon)\le N(\cU^F)$ for every nonempty finite subset $F$ of $\Gamma$,
and hence  $\limsup_{n\to \infty}\frac{1}{|F_n|}
\log s_{\vartheta, F_n, \infty}(\varepsilon)\le \htopol(\alpha, \cU)$.
Therefore $\lim_{\varepsilon \to 0}\limsup_{n\to \infty}\frac{1}{|F_n|}
\log s_{\vartheta, F_n, \infty}(\varepsilon)\le \sup_{\cU}\htopol(\alpha, \cU)$.

Let $\cU$ be a finite open cover of $X$ generated by $\vartheta$ under $\alpha$.
Then we can find a finite open cover $\cV$ of $X$ finer than $\cU$ such that
$\cV$ is generated by $\vartheta^K$ for some nonempty finite subset $K$ of $\Gamma$.
It follows that there exists some $\varepsilon>0$
such that every open $\vartheta^K$-ball with radius $3\varepsilon$ is contained in some element of $\cV$.
Cover $X$ by finitely many, say $M$, open $\vartheta$-balls with radius $\varepsilon$.
We have
$$M^{|KF\setminus F|}r_{\vartheta, F, \infty}(\varepsilon)\ge r_{\vartheta, KF, \infty}(2\varepsilon)\ge N(\cV^{F})\ge N(\cU^F)$$
for every nonempty finite subset $F$ of $\Gamma$,
and hence  $\liminf_{n\to \infty}\frac{1}{|F_n|}
\log r_{\vartheta, F_n, \infty}(\varepsilon)\ge \htopol(\alpha, \cU)$.
Therefore $\lim_{\varepsilon \to 0}\liminf_{n\to \infty}\frac{1}{|F_n|}
\log r_{\vartheta, F_n, \infty}(\varepsilon)\ge \sup_{\cU}\htopol(\alpha, \cU)$.
This proves the case $p=\infty$.

Now the case $1\le p<\infty$ follows
from the case $p=\infty$, the facts $s_{\vartheta, F,
p}(\varepsilon)\le s_{\vartheta, F, \infty}(\varepsilon)$ and $r_{\vartheta, F,
p}(\varepsilon)\le s_{\vartheta, F, p}(\varepsilon)\le r_{\vartheta, F,
p}(\varepsilon/2)$, and Lemma~\ref{compare:lemma}.
\end{proof}

\section{Positive Case} \label{S-positive}

In this section we show that the intuitive equalities (\ref{E-intuition}) do hold when $f$ is positive (Theorem~\ref{T-positive}).
This proves Theorem~\ref{T-main} in such case.
Throughout this section $\Gamma$ is a discrete amenable group.

Denote by $\vartheta$ the metric
on $\Rb/\Zb$ induced from the standard metric on $\Rb$, i.e. $\vartheta(t\mod \Zb,\, \, t'\mod \Zb)=\min_{m\in \Zb}|t-t'-m|$.
Recall the identification (\ref{E-X}). Via the projection $X_f\rightarrow \Rb/\Zb$ sending $x$ to $x_{e_{\Gamma}}$, we shall think of
$\vartheta$ as a continuous pseudometric on $X_f$. Clearly the topology of $X_f$ is
generated by $\vartheta$ under $\alpha_f$. Thus we can apply Theorem~\ref{T-sep}.
We shall make use of the cases $p=2$ and $p=\infty$. We shall abbreviate $s_{\vartheta, F, p}(\varepsilon)$
as $s_{F, p}(\varepsilon)$ etc.

The following result is crucial for the comparison of $s_{F, p}(\varepsilon)$, $r_{F, p}(\varepsilon)$ and $|\Zb[F_n]/f_{F_n}\Zb[F_n]|$.

\begin{lemma} \label{ball:lemma}
There exists some universal constant $C>0$ such that for any
$\lambda>1$,
there is some $\delta>0$ so that for any nonempty finite
set $Y$, any positive integer $n$ with $|Y|\le \delta n$, and any $M\ge 1$ one has
$$ |\{x\in \Zb[Y]: \|x\|_2\le M\cdot n^{1/2}\}|\le
C\lambda^{n}M^{|Y|}.$$
\end{lemma}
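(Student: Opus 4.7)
The plan is to use the standard lattice-point-versus-volume argument and control the dimensional factor via Stirling's formula, exploiting the hypothesis $|Y|\le \delta n$ to absorb everything into a factor $\lambda^n$.

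First I would set $d=|Y|$ and identify $\Zb[Y]$ with $\Zb^d$. Every lattice point $x$ with $\|x\|_2\le R$ is the center of a unit cube which is entirely contained in the Euclidean ball of radius $R+\sqrt{d}/2$ (since the farthest corner of a unit cube from its center is at distance $\sqrt{d}/2$). Since distinct lattice points give disjoint unit cubes, comparing volumes yields
$$|\{x\in \Zb^d:\|x\|_2\le R\}|\le \omega_d\bigl(R+\tfrac{\sqrt{d}}{2}\bigr)^d, \qquad \omega_d=\frac{\pi^{d/2}}{\Gamma(d/2+1)}.$$
Plugging in $R=Mn^{1/2}$ and using $M\ge 1$ together with $d\le \delta n$, I bound $R+\sqrt{d}/2\le Mn^{1/2}(1+\sqrt{\delta}/2)$, so the count is at most $\omega_d\,M^d\,n^{d/2}(1+\sqrt{\delta}/2)^d$.

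Next I would apply Stirling in the form $\Gamma(d/2+1)\ge c'\sqrt{d}\,(d/(2e))^{d/2}$ for a universal $c'>0$, which gives $\omega_d\le (c'\sqrt{d})^{-1}(2\pi e/d)^{d/2}$. Collecting terms, the estimate becomes
$$|\{x\in \Zb[Y]:\|x\|_2\le Mn^{1/2}\}|\le \frac{M^d}{c'\sqrt{d}}\left(\frac{2\pi e\, n}{d}\right)^{d/2}(1+\sqrt{\delta}/2)^d.$$
Setting $t=d/n\in(0,\delta]$, the exponential part has logarithm per $n$ equal to
$$\psi(t):=\tfrac{t}{2}\log(2\pi e/t)+t\log(1+\sqrt{\delta}/2).$$
Since $\psi(t)\to 0$ as $t\to 0^+$ (and $\psi$ is monotone on $(0,2\pi)$), for any prescribed $\lambda>1$ I can choose $\delta>0$ small enough that $\psi(t)\le \log\lambda$ for every $t\in(0,\delta]$. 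Then the displayed bound simplifies to $(c'\sqrt{d})^{-1}\lambda^n M^d\le (1/c')\lambda^n M^{|Y|}$ (using $d\ge 1$), which is the desired inequality with the universal constant $C=1/c'$.

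The main technical point is just verifying that $\psi(t)\to 0$ as $t\to 0$, which is immediate since $t\log(1/t)\to 0$; beyond that, the proof is a straightforward volume estimate plus Stirling, and the constraint $|Y|\le \delta n$ is precisely what forces the dimension to be small enough relative to $n$ for the Stirling factor $(2\pi e n/d)^{d/2}$ to be absorbed into $\lambda^n$. There is no serious obstacle; the only care needed is to keep the constant $C$ universal (independent of $\lambda$ and $\delta$), which is ensured by the fact that $c'$ in Stirling's formula is universal and that $d\ge 1$ makes the $1/\sqrt{d}$ factor harmless.
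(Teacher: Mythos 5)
Your proof is correct and follows essentially the same route as the paper's: compare the lattice-point count to the volume of a slightly enlarged Euclidean ball, bound the ball volume via Stirling, and use $|Y|\le \delta n$ to absorb the factor $(n/|Y|)^{|Y|/2}$ (your $\psi(t)$, the paper's $\varsigma$) into $\lambda^n$, keeping $C$ universal. The only cosmetic point is that your $\psi$ itself depends on $\delta$ through the $(1+\sqrt{\delta}/2)$ term, but since that term is bounded by, say, $t\log(3/2)$ for $\delta\le 1$ and tends to $0$ uniformly as $\delta\to 0$, the choice of $\delta$ is not circular and the argument stands.
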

\begin{proof}
Let $\delta>0$ be a small number less than $e^{-1}$, which we
shall determine later.
Let $Y$ be a nonempty finite set and $n$ be a positive integer
with $|Y|\le \delta n$.  For each $x\in \Zb[Y]$, denote $\{z\in
\Rb[Y]: 0\le z_{y}-x_{y}\le 1 \mbox{ for all } y \in Y\}$ by
$D_x$. Denote $\{x\in \Zb[Y]: \|x\|_2\le M\cdot n^{1/2}\}$ by $S$
and denote the union of $D_x$ for all $x\in S$ by $D_S$. Then the
(Euclidean) volume of $D_S$ is equal to $|S|$. Note that
$\|z\|_2\le M\cdot n^{1/2}+n^{1/2}\le 2Mn^{1/2}$ for every $z\in D_S$.

A simple calculation shows that the function
$\varsigma(t):=(n/t)^{t/2}$ is increasing for $0<t\le ne^{-1}$.
The volume of the unit ball of $\Rb[Y]$ under $\| \cdot \|_2$ is $\pi^{|Y|/2}/(|Y|/2)!$
\cite[page 9]{CS}. By Stirling's formula there exists some
constant $C'>0$ such that $m!\ge C' \sqrt{m}(\frac{m}{e})^m$ for
all $m\ge 1$. Thus the volume of $D_S$ is no bigger than
\begin{eqnarray*}
(\pi^{|Y|/2}(2Mn^{1/2})^{|Y|})/(|Y|/2)! &\le&
(\pi^{|Y|/2}(2Mn^{1/2})^{|Y|})/(C'\sqrt{|Y|/2}(|Y|/(2e))^{|Y|/2}) \\
&\le& CC_1^{|Y|}(n/|Y|)^{|Y|/2}M^{|Y|}= CC_1^{|Y|}\varsigma(|Y|)M^{|Y|}\\
&\le& CC_1^{\delta n}\varsigma(\delta n)M^{|Y|}=CC_1^{\delta
n}\delta^{-\delta n/2}M^{|Y|},
\end{eqnarray*}
where $C=\sqrt{2}/C'$ and $C_1=2\sqrt{2e\pi}$. Take $\delta>0$ so
small that $C_1^{\delta}\delta^{-\delta/2}\le \lambda$. Then the
volume of $D_S$ is no bigger than $C\lambda^{n}M^{|Y|}$. Consequently,
$|S|\le C\lambda^{n}M^{|Y|}$.
\end{proof}

We need the following result of Deninger (note that the assumption
in \cite[Corollary 3.4]{Den} that $\Gamma$ is finitely generated
is not needed). In Corollary~\ref{C-determinant formula} we shall generalize the equality part to
non-positive elements in the presence of perturbations. Recall the notations $p_F$ and $f_F$ in Notation~\ref{N-compression}.

\begin{lemma}\cite[Theorem 3.2, Proposition 3.3, Corollary 3.4]{Den} \label{L-Deninger}
Let $f\in \Zb\Gamma$ be invertible  and positive in $\cL\Gamma$. Then
$f_F$ is invertible and $\|(f_F)^{-1}\|\le \|f^{-1}\|$ for every nonempty finite subset $F$ of
$\Gamma$, and
$$ \ddet_{\cL\Gamma} f=\lim_{n\to \infty}|\det f_{F_n}|^{1/|F_n|}=\lim_{n\to \infty}|\Zb[F_n]/f_{F_n}\Zb[F_n]|^{1/|F_n|}$$
for any (left) F{\o}lner net $\{F_n\}_{n\in J}$ of $\Gamma$.
\end{lemma}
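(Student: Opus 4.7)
The plan is to break the statement into three parts and handle them in order: invertibility and the norm bound on $(f_F)^{-1}$, the identification of $|\det f_F|$ with the index $|\Zb[F]/f_F\Zb[F]|$, and finally the limiting formula for the Fuglede--Kadison determinant.

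First I would handle invertibility. Since $f$ is positive and invertible in $\cL\Gamma$, spectral calculus gives $f\ge c\cdot 1$ in $\cL\Gamma$ with $c=\|f^{-1}\|^{-1}>0$. For any $v\in\Cb[F]$,
$$\langle f_F v,v\rangle=\langle f\,\iota_F v,\iota_F v\rangle\ge c\,\|\iota_F v\|^2=c\,\|v\|^2,$$
and similarly $f_F\le \|f\|\cdot 1_{\Cb[F]}$. Hence $f_F$ is a positive invertible operator on $\Cb[F]$ with spectrum contained in $[c,\|f\|]$ and $\|(f_F)^{-1}\|\le 1/c=\|f^{-1}\|$. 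Because $f\in\Zb\Gamma$, the operator $f_F$ preserves $\Zb[F]$, so Lemma~\ref{L-determinant quotient group size} immediately yields $|\det f_F|=|\Zb[F]/f_F\Zb[F]|$, identifying the last two quantities in the claimed chain of equalities.

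It remains to prove $\frac{1}{|F_n|}\log\det f_{F_n}\to \log\ddet_{\cL\Gamma}f$. By definition of the Fuglede--Kadison determinant (and the fact that $\log$ is continuous on $[c,\|f\|]$), $\log\ddet_{\cL\Gamma}f=\tr_{\cL\Gamma}(\log f)$, while $\log\det f_F=\tr_{B(\Cb[F])}(\log f_F)=\sum_{\gamma\in F}\langle(\log f_F)e_\gamma,e_\gamma\rangle$. Since the spectra of $f$ and of every $f_{F}$ all lie in the same compact interval $[c,\|f\|]$, it suffices by a uniform Weierstrass approximation of $\log$ to prove the moment convergence
$$\frac{1}{|F_n|}\tr\bigl(f_{F_n}^k\bigr)\ \xrightarrow[n\to\infty]{}\ \tr_{\cL\Gamma}(f^k)\qquad(k\ge 0).$$
This is where the F{\o}lner condition enters. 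Let $K$ be the (finite, symmetric) set of group elements obtained as $k$-fold products from $\supp(f)$. For each $\gamma\in F$ with $K\gamma\subseteq F$, all intermediate vectors in the product $p_F f\,\iota_F p_F f\cdots \iota_F p_F f\,e_\gamma$ already lie in $\Cb[F]$, so $f_F^k e_\gamma=f^k e_\gamma$ as vectors in $\ell^2(\Gamma)$. The contribution of these ``interior'' $\gamma$ to $\tr(f_F^k)$ is therefore $|F_K|\,\tr_{\cL\Gamma}(f^k)$, and the remaining boundary terms are bounded by $|F\setminus F_K|\cdot\|f\|^k$ using $\|f_F\|\le\|f\|$. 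Dividing by $|F_n|$ and invoking $|F_n\setminus (F_n)_K|/|F_n|\to 0$ gives the desired moment convergence.

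The main obstacle is the passage from moments to the logarithmic trace, which I would address through the following clean argument: given $\varepsilon>0$, choose a polynomial $p$ with $|p(t)-\log t|<\varepsilon$ for all $t\in[c,\|f\|]$; then by spectral calculus $\|p(f)-\log f\|<\varepsilon$ and $\|p(f_F)-\log f_F\|<\varepsilon$ uniformly in $F$, so
$$\Bigl|\tfrac{1}{|F_n|}\tr(\log f_{F_n})-\tr_{\cL\Gamma}(\log f)\Bigr|\le 2\varepsilon+\Bigl|\tfrac{1}{|F_n|}\tr(p(f_{F_n}))-\tr_{\cL\Gamma}(p(f))\Bigr|,$$
and the moment convergence above drives the last term to $0$. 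Letting $\varepsilon\to 0$ completes the proof.
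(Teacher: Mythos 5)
Your proposal is correct. Note that the paper itself does not prove this lemma: it is quoted from Deninger \cite{Den} (with only the remark that finite generation of $\Gamma$ is unnecessary), and the paper's own closely related argument appears in Section~\ref{S-determinant}, where Theorem~\ref{T-approximate determinant formula} generalizes the equality part to non-positive $f$ with perturbed compressions. Your route is essentially the same as that machinery (and as Deninger's original proof): the compression bounds $c\le f_F\le\|f\|$ give invertibility and $\|(f_F)^{-1}\|\le\|f^{-1}\|$, Lemma~\ref{L-determinant quotient group size} identifies $|\det f_F|$ with the index $|\Zb[F]/f_F\Zb[F]|$, and the limit formula follows from Weierstrass approximation of $\log$ on a common spectral interval away from $0$ together with convergence of normalized traces of polynomials of the compressions. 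The one genuine difference is that where the paper (and Deninger) invoke the L\"uck--Schick approximation theorem $\tr_{\cL\Gamma}(Q(T))=\lim_n\frac{1}{|F_n|}\tr(Q(T_{F_n}))$ \cite{Luck94,Schick01}, you prove the needed moment convergence directly by a F{\o}lner boundary count, which is a nice self-contained alternative and is valid for nets as stated. Only a cosmetic point: your set $K$ should be taken to contain all products of length at most $k$ from $\supp(f)$ (e.g.\ adjoin the identity), so that every intermediate vector $f^j e_\gamma$, $j\le k$, is supported in $F$ for the interior $\gamma$; symmetry of $K$ is not needed.
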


\begin{notation} \label{N-support}
For $f\in \Cb\Gamma$, denote by $K_f$ the union of the supports of $f$ and $f^*$,  and
the identity of $\Gamma$.
\end{notation}

\begin{lemma} \label{L-positive small}
Let $f\in \Zb\Gamma$ be invertible and positive in $\cL\Gamma$. Then
for any $\lambda>1$ and $\varepsilon>0$, there is some $\delta>0$
such that when a nonempty finite subset $F\subseteq \Gamma$
satisfies $|K^2_fF\setminus F|\le \delta|F|$ we have
$$ s_{F, 2}(\varepsilon)\le C\lambda^{|F|}|\Zb[F]/f_{F}\Zb[F]|,$$
where $C$ is the universal constant in Lemma~\ref{ball:lemma}.
\end{lemma}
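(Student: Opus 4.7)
The plan is to show, for each $[\vartheta, F, 2, \varepsilon]$-separated set $E \subseteq X_f$, that a natural map $E \to \Zb[F]/f_F\Zb[F]$ is injective once $\delta$ is small enough (depending on $\varepsilon$ but not on $\lambda$). This yields the even stronger bound $s_{F,2}(\varepsilon) \le |\Zb[F]/f_F\Zb[F]|$, so the lemma follows at once because $C \ge 1$ and $\lambda > 1$; the factor $C\lambda^{|F|}$ in the statement is slack whose form will match the non-positive case, and Lemma~\ref{ball:lemma} itself is not required at this step.

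For each $x \in E$ I take the canonical lift $\tilde{x} \in [-\tfrac{1}{2}, \tfrac{1}{2})^\Gamma$, so that $f\tilde{x} \in \Zb[[\Gamma]]$, and set $A_x := p_F(f\tilde{x}) \in \Zb[F]$. Decomposing $\tilde{x} = \iota_F p_F(\tilde{x}) + (\tilde{x} - \iota_F p_F(\tilde{x}))$ and applying $f$ then $p_F$ gives
$$ A_x = f_F\, p_F(\tilde{x}) + B_x, $$
where $B_x := p_F(f(\tilde{x} - \iota_F p_F(\tilde{x})))$ is supported on the $K_f$-boundary $\partial F := F \setminus \bigcap_{\beta \in K_f}\beta F$ and $\|B_x\|_\infty \le \|f\|_1/2$. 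Since $K_f = K_f^{-1}$ and $K_f \subseteq K_f^2$, the invariance hypothesis yields $|\partial F| \le \sum_{\beta \in K_f}|\beta F \setminus F| \le |K_f|\,\delta\,|F|$.

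Next I define $\Psi : E \to \Zb[F]/f_F\Zb[F]$ by $\Psi(x) := [A_x]$ and suppose $\Psi(x) = \Psi(y)$ for distinct $x, y \in E$. Writing $A_x - A_y = f_F z$ with $z \in \Zb[F]$ (well-defined by invertibility of $f_F$, Lemma~\ref{L-Deninger}) and $u := \tilde{x} - \tilde{y}$, comparing the two expressions for $p_F(fu)$ gives
$$ p_F(u) - z = - f_F^{-1}\, p_F\!\left(f(u - \iota_F p_F(u))\right). $$
By Lemma~\ref{L-Deninger} we have $\|f_F^{-1}\| \le \|f^{-1}\|$, and $\|u - \iota_F p_F(u)\|_\infty < 1$, so $\|p_F(u) - z\|_2 \le \|f^{-1}\|\,\|f\|_1\sqrt{|K_f|\,\delta\,|F|}$. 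On the other hand, the $[\vartheta, F, 2, \varepsilon]$-separation of $E$ reads $\min_{m \in \Zb[F]} \|p_F(u) - m\|_2 > \varepsilon \sqrt{|F|}$; specialising $m = z$ produces a contradiction as soon as $\delta < \varepsilon^2 / (|K_f|\,\|f^{-1}\|^2\,\|f\|_1^2)$. Hence $\Psi$ is injective for such $\delta$, giving $s_{F,2}(\varepsilon) \le |E| \le |\Zb[F]/f_F\Zb[F]|$.

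The crux of the argument is to align the integer shift $m$ implicit in the wrap-around $\ell^2$-metric on $(\Rb/\Zb)^F$ with the particular shift $z$ forced by $\Psi(x) = \Psi(y)$; only with this alignment does the separation of $E$ convert into the boundary estimate on $B_x - B_y$. The positivity of $f$ enters exactly through Lemma~\ref{L-Deninger}, both to make $f_F$ invertible (so that $z$ is well-defined) and to bound $\|f_F^{-1}\|$ uniformly in $F$, which is what keeps the boundary error of order $\sqrt{\delta|F|}$ rather than scaling with the possibly uncontrolled $\|f_F^{-1}\|$.
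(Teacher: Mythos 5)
Your proof is correct, and it takes a genuinely different route from the paper's — one that in fact gives a stronger conclusion. The paper works with the same map $x\mapsto [p_F(f\tilde x)]\in\Zb[F]/f_F\Zb[F]$, but it does not prove injectivity; it bounds each fibre by $C\lambda^{|F|}$. The reason is structural: given $\psi(x)=\psi(y)$, the paper writes $p_F(f(x'-y'))=f_Fh_x$ with $h_x\in\Zb[F]$ and sets $z_x=f(x'-y')-fh_x$; to control $z_{x_1}-z_{x_2}$ it must first arrange $p_{K_fF\setminus F}(fh_{x_1})=p_{K_fF\setminus F}(fh_{x_2})$ (so that $f(h_{x_1}-h_{x_2})$ is supported in $F$), and since $\|h_x\|_2$ is only of order $\sqrt{|F|}$ this matching is obtained by pigeonholing over the integer points counted in Lemma~\ref{ball:lemma} — that is exactly where the factor $C\lambda^{|F|}$ and the hypothesis on $K_f^2F$ enter; separation then reduces each pigeonhole class to a single point via $f^{-1}(z_{x_1}-z_{x_2})$. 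You bypass the pigeonholing: inverting $f_F$ on the relation $A_x-A_y=f_Fz$, the only error term is $p_F\big(f(u-\iota_Fp_F(u))\big)$, which is supported on the $K_f$-boundary of $F$ and bounded in sup-norm by $\|f\|_1$ a priori, independently of any auxiliary integer vectors; hence $\|p_F(u)-z\|_2\le\|f^{-1}\|\,\|f\|_1\sqrt{|K_f|\,\delta\,|F|}$, contradicting the coordinatewise separation bound $\min_{m\in\Zb[F]}\|p_F(u)-m\|_2>\varepsilon\sqrt{|F|}$ once $\delta<\varepsilon^2/(|K_f|\,\|f^{-1}\|^2\,\|f\|_1^2)$. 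Positivity enters in both arguments only through Lemma~\ref{L-Deninger}, as you say. Your version needs only $|K_fF\setminus F|\le\delta|F|$, makes no use of Lemma~\ref{ball:lemma}, and yields $s_{F,2}(\varepsilon)\le|\Zb[F]/f_F\Zb[F]|$; the stated inequality then follows since the universal constant $C$ of Lemma~\ref{ball:lemma} may harmlessly be taken to be at least $1$. The slack $C\lambda^{|F|}$ is irrelevant for the entropy computation in any case, and the paper's pigeonholing template is the one genuinely needed in Lemmas~\ref{L-positive large} and \ref{L-rational}, where the map goes from the finite quotient into $X_f$ and your shortcut is not available; but for this lemma your argument is cleaner and sharper.
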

\begin{proof} Write $K$ for $K_f$.
Take $1>\delta>0$ such that $(\|f^{-1}\|\cdot \|f\|\cdot 2^{1/2})^{\delta}\le \lambda^{1/2}$
and $\delta^{1/2}\|f^{-1}\|\cdot \|f\|_1\le \varepsilon$,
and that $\delta$
satisfies the conclusion of
Lemma~\ref{ball:lemma} for $\lambda'=\lambda^{1/2}$.
Let $F$ satisfy the hypothesis.

Take an $[F, 2, \varepsilon]$-separated subset $E\subseteq X_f$
with $|E|=s_{F, 2}(\varepsilon)$. For each $x\in E$ denote by
$\tilde{x}$ the element in $[0, 1)^{\Gamma}$ such that $x$ is the
image of $\tilde{x}$ under the natural map $[0,
1)^{\Gamma}\rightarrow (\Rb/\Zb)^{\Gamma}$. Then
$f\tilde{x}\in \Zb[[\Gamma]]$ and hence
$p_{F}(f\tilde{x})\in \Zb[F]$. Denote by $\varphi_{F}$ the
quotient map $\Zb[F]\rightarrow \Zb[F]/f_{F}\Zb[F]$. We get a map
$\psi: E\rightarrow \Zb[F]/f_{F}\Zb[F]$ sending $x$ to
$\varphi_{F}(p_{F}(f\tilde{x}))$. It suffices to
show that for any $a\in \Zb[F]/f_F\Zb[F]$, the preimage of $a$
under $\psi$ has at most $C\lambda^{|F|}$ elements. Fix $a\in \Zb[F]/f_F\Zb[F]$
and $y\in \psi^{-1}(a)$.

For each $x\in E$, set $x'=p_{KF}(\tilde{x})$. We shall identify $\Cb[KF]$ naturally as a subspace
of $\ell^2(\Gamma)$ via the embedding $\iota_{KF}$ in Notation~\ref{N-compression}. Note that
$\psi(x)=\varphi_F(p_F(fx'))$.  Suppose that $x\in \psi^{-1}(a)$. Then $p_F(f(x'-y'))$ lies in $f_F\Zb[F]$, and hence
\begin{align} \label{E-positive}
p_F(f(x'-y'))=f_F(h_x)
\end{align}
for some $h_x\in \Zb[F]$. Set $z_x=f(x'-y')-fh_x$. Then
$$ p_F(z_x)=p_F(f(x'-y')-fh_x)=p_F(f(x'-y'))-f_F(fh_x)=0.$$
Thus $z_x$ is in $\Rb\Gamma$ and vanishes on $F$, and
\begin{eqnarray} \label{positive:eq}
f(x'-y')=fh_x+z_x.
\end{eqnarray}
By Lemma~\ref{L-Deninger} the linear operator $f_{F}$ is invertible
and $\| (f_{F})^{-1}\|\le \|f^{-1}\|$. From \eqref{E-positive} we get
$$ h_x=(f_{F})^{-1}(p_{F}(f(x'-y'))).$$
Thus
\begin{eqnarray*}
\|h_x\|_2&\le &\|(f_F)^{-1}\|\cdot\|f\| \cdot \|x'-y'\|_2\le \|f^{-1}\|\cdot \|f\|\cdot \|x'-y'\|_{\infty}\cdot |KF|^{1/2}\\
&\le &\|f^{-1}\|\cdot \|f\|\cdot |KF|^{1/2}\le \|f^{-1}\|\cdot \|f\|\cdot 2^{1/2}\cdot |F|^{1/2},
\end{eqnarray*}
and hence
$$\|p_{FK\setminus F}(f h_x)\|_2\le \|f\|\cdot \|h_x\|_2\le \|f^{-1}\|\cdot \|f\|^2\cdot 2^{1/2}\cdot |F|^{1/2}.$$
By Lemma~\ref{ball:lemma} one has
\begin{eqnarray*}
|\{p_{KF\setminus F}(f h_x): x\in \psi^{-1}(a)\}|&\le &C\lambda^{|F|/2}(\|f^{-1}\|\cdot \|f\|^2\cdot 2^{1/2})^{|KF\setminus F|}\\
&\le & C\lambda^{|F|/2}(\|f^{-1}\|\cdot \|f\|^2\cdot 2^{1/2})^{\delta|F|}\\
&\le & C\lambda^{|F|}.
\end{eqnarray*}
Thus we can find a subset $W\subseteq
\psi^{-1}(a)$ with $C\lambda^{|F|}|W|\ge |\psi^{-1}(a)|$ such that
$ p_{KF\setminus F}(f h_{x_1})=p_{KF\setminus
F}(f h_{x_2})$ for all $x_1, x_2\in W$. Let $x_1, x_2\in W$. Applying
(\ref{positive:eq}) to $x=x_1$ and $x=x_2$ respectively, we get
$$ f(x'_1-x'_2)=f(x'_1-y')-f(x'_2-y')=f(h_{x_1}-h_{x_2})+(z_{x_1}-z_{x_2}).$$
Since $f(h_{x_1}-h_{x_2})$ has support contained in $F$, while $z_{x_1}-z_{x_2}$ has support contained in $K^2F\setminus F$, one has
\begin{eqnarray*}
 \|z_{x_1}-z_{x_2}\|_2&=& \|p_{K^2F\setminus
F}(f(x'_1-x'_2))\|_2
\le \|f(x'_1-x'_2)\|_{\infty}\cdot |K^2F\setminus
F|^{1/2}\\
&\le & \|f\|_1\cdot \|x'_1-x'_2\|_{\infty}\cdot |K^2F\setminus
F|^{1/2}
\le \delta^{1/2} \|f\|_1\cdot |F|^{1/2},
\end{eqnarray*}
and hence
\begin{eqnarray*}
\|p_F(f^{-1}(z_{x_1}-z_{x_2}))\|_2
&\le & \|f^{-1}(z_{x_1}-z_{x_2})\|_2 \le  \|f^{-1}\|\cdot \|z_{x_1}-z_{x_2}\|_2 \\
&\le & \delta^{1/2}\|f^{-1}\|\cdot \|f\|_1\cdot |F|^{1/2}\le
\varepsilon |F|^{1/2}.
\end{eqnarray*}
If $x_1\neq x_2$,
then
$$\|p_F(f^{-1}(z_{x_1}-z_{x_2}))\|_2=\|p_F((x'_1-x'_2)-(h_{x_1}-h_{x_2}))\|_2\ge
d_{F, 2}(x_1, x_2)|F|^{1/2}> \varepsilon |F|^{1/2},$$ which is a
contradiction. Therefore $W$ contains at most one point. Thus
$$ |\psi^{-1}(a)|\le C\lambda^{|F|}|W|\le C\lambda^{|F|},$$
as desired.
\end{proof}

For an abelian group $G$, denote by $G_{\rm tor}$ the subgroup of
torsion elements. If $f\in \Zb\Gamma$ and $f_F$ is invertible for some nonempty finite subset $F$ of $\Gamma$,
then $f_F\Zb[F]$ has rank $|F|$, and hence $\Zb[F]/f_F\Zb[F]$ is a finite group. In the case,
we shall apply the following result.

\begin{lemma} \label{L-positive large}
Let $f\in \Zb\Gamma$ be invertible in $\cL\Gamma$.
Then for any $\lambda>1$, there is some $\delta>0$ such that for
any nonempty finite subset
$F\subseteq \Gamma$ satisfying $|K_fF\setminus F|\le \delta|F|$ we
have
$$ C\lambda^{|F|}s_{F, \infty}(\frac{1}{2\|f\|_1})\ge |(\Zb[F]/f_{F}\Zb[F])_{\rm tor}|,$$
where $C$ is the universal constant in Lemma~\ref{ball:lemma}.
\end{lemma}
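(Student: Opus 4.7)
The strategy mirrors Lemma~\ref{L-positive small} in reverse: I map torsion classes into $X_f$ via the inverse of $f$ in $\cL\Gamma$. Define the group homomorphism
\[
\Phi\colon \Zb[F]\to X_f,\qquad \Phi(v) := f^{-1}(\iota_F v)\bmod\Zb,
\]
which lies in $X_f$ because $f\cdot f^{-1}(\iota_F v)=\iota_F v\in\Zb\Gamma$. For each $[v]\in T := (\Zb[F]/f_F\Zb[F])_{\rm tor}$, pick a representative in a fundamental domain for the sublattice $f_F\Zb[F]\subseteq\Zb[F]$, so that $\|v\|_\infty\le \|f\|_1$.

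Take $E\subseteq X_f$ a maximal $[F,\infty,\varepsilon]$-separated set with $\varepsilon=1/(2\|f\|_1)$, so $|E|=s_{F,\infty}(\varepsilon)$ and $E$ is $\varepsilon$-spanning; for each $[v]\in T$, pick $\Psi([v])\in E$ with $d_{F,\infty}(\Phi(v),\Psi([v]))\le \varepsilon$. It suffices to prove the fibre bound $|\Psi^{-1}(y)|\le C\lambda^{|F|}$ for every $y\in E$, since summing over $y$ yields $|T|\le C\lambda^{|F|} s_{F,\infty}(\varepsilon)$.

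Given $[v_1],\dots,[v_k]\in\Psi^{-1}(y)$, set $u_i := f^{-1}\iota_F(v_i-v_1)\in\ell^2_\Rb(\Gamma)$. Since $d_{F,\infty}(\Phi(v_i-v_1),0)\le 2\varepsilon=1/\|f\|_1$, I may write $u_i|_F = z_i+e_i$ with $z_i\in\Zb[F]$ and $\|e_i\|_\infty\le 2\varepsilon$. For $\gamma$ in the interior $\mathrm{int}(F):=\{\gamma\in F:K_f^{-1}\gamma\subseteq F\}$ (of size at least $(1-|K_f|\delta)|F|$), the identity $(fu_i)(\gamma)=(v_i-v_1)(\gamma)$ depends only on $u_i|_F$, yielding
\[
\mu_i := v_i-v_1-f_F z_i\in\Zb[F],\qquad \mu_i|_{\mathrm{int}(F)} = (f_F e_i)|_{\mathrm{int}(F)}.
\]
Because $[\mu_i]=[v_i-v_1]\in T$ are distinct for distinct $i$, the map $i\mapsto\mu_i$ is injective, so the remaining question is how many admissible $\mu_i$ there are.

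The principal obstacle is to force $\mu_i|_{\mathrm{int}(F)}=0$, confining $\mu_i$ to the boundary $\partial F:=F\setminus\mathrm{int}(F)$ of cardinality $\le |K_f|\delta|F|$. The naive estimate $\|f_F e_i\|_\infty\le\|f\|_1\cdot 2\varepsilon=1$ only gives $\mu_i|_{\mathrm{int}(F)}\in\{-1,0,1\}^{\mathrm{int}(F)}$, which is far too many possibilities; the strict inequality must be extracted from the rigidity of the equality case (equality would force $e_i$ to equal $\pm 1/\|f\|_1$ on the entire translate $K_f^{-1}\gamma$ with signs aligning with those of $f$, a non-generic configuration) or by a careful choice of the nearest-integer representative $z_i$. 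Once $\mu_i\in\Zb[\partial F]$, the estimate $\|u_i\|_2\le\|f^{-1}\|\|v_i-v_1\|_2\le 2\|f^{-1}\|\|f\|_1|F|^{1/2}$ yields $\|\mu_i\|_2\le M|F|^{1/2}$ for some $M=M(\|f\|,\|f\|_1,\|f^{-1}\|)$, and Lemma~\ref{ball:lemma} applied with $Y=\partial F$, $n=|F|$, and auxiliary parameter $\lambda^{1/2}$---together with a choice of $\delta$ small enough that $M^{|K_f|\delta}\le\lambda^{1/2}$---produces the desired $k\le C\lambda^{|F|}$.
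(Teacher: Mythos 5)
There is a genuine gap, and it sits exactly where you flagged it: you never force $\mu_i|_{\mathrm{int}(F)}=0$, and at the stated radius this cannot be repaired within your set-up. Because you compare two elements of the \emph{same fibre} of a spanning map, you only know $d_{F,\infty}(\Phi(v_i),\Phi(v_1))\le 2\varepsilon$, so $\|e_i\|_\infty\le 2\varepsilon=\frac{1}{\|f\|_1}$ and hence $|(f_Fe_i)(\gamma)|\le \|f\|_1\cdot 2\varepsilon=1$ with equality genuinely attainable (the spanning distances can equal $\varepsilon$ and the signs of $e_i$ on $K_f^{-1}\gamma$ can align with those of $f$; nothing in the construction rules this out, and no choice of nearest-integer $z_i$ changes the bound). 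So you only get $\mu_i(\gamma)\in\{-1,0,1\}$ on $\mathrm{int}(F)$, which leaves $3^{|\mathrm{int}(F)|}$ possibilities and destroys the count. The appeal to ``rigidity of the equality case'' is not an argument. Note that your scheme does prove the inequality with $\frac{1}{2\|f\|_1}$ replaced by any strictly smaller radius (then $2\varepsilon\|f\|_1<1$ and $\mu_i$ is confined to $\partial F$, where your $\ell^2$ bound plus Lemma~\ref{ball:lemma} finishes); that weaker statement would still suffice for Theorem~\ref{T-positive}, but it is not the lemma as stated. A smaller imprecision: the claim that a ``fundamental domain'' for $f_F\Zb[F]$ gives representatives with $\|v\|_\infty\le\|f\|_1$ is not justified as written, since $f_F$ need not be invertible and $f_F\Zb[F]$ may have deficient rank; the correct route is the torsion relation $k v=f_F w$, writing $w/k=w_1+w_2$ with $w_2\in[0,1)^F$ and replacing $v$ by $f_Fw_2$.

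The paper's proof avoids the fatal factor of $2$ by not passing through a spanning set at all. It chooses representatives $\tilde x$ with $\|\tilde x\|_2\le\|f\|\,|F|^{1/2}$ via the torsion trick, then records for each $x$ a discretization $w_x\in\frac{1}{D}\Zb[K_fF\setminus F]$ of $p_{K_fF\setminus F}(f^{-1}\tilde x)$ accurate to $\varepsilon/2$; Lemma~\ref{ball:lemma} bounds the number of possible $w_x$ by $C\lambda^{|F|}$, producing a subset $W$ on which all $w_x$ coincide and $C\lambda^{|F|}|W|\ge|(\Zb[F]/f_F\Zb[F])_{\rm tor}|$. It then shows $x\mapsto f^{-1}\tilde x \bmod \Zb$ maps $W$ injectively into an $[F,\infty,\varepsilon]$-separated set: if two members of $W$ were within $\varepsilon$ over $F$, the integer-rounded difference $z$ satisfies $\|z|_{F}\|_\infty\le\varepsilon$ (here the radius is $\varepsilon$, not $2\varepsilon$) and $\|z|_{K_fF\setminus F}\|_\infty\le\varepsilon$ (because the shared $w_x$ controls the outside boundary), so $\|p_F(fz)\|_\infty\le\varepsilon\|f\|_1=\frac12<1$, forcing the integer vector $p_F(fz)$ to vanish and $\tilde x-\tilde y\in f_F\Zb[F]$. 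In short, the paper spends the Lemma~\ref{ball:lemma} count on the outside boundary data $w_x$ and keeps the full separation radius $\varepsilon$ for the integrality argument, whereas your fibre argument spends the radius twice and lands exactly on the borderline where the integrality step fails.
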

\begin{proof} Write $K$ for $K_f$. Set $D=4\|f\|_1$ and $\varepsilon=2D^{-1}$.
Take $\delta>0$ such that $(D\cdot \|f\|\cdot \|f^{-1}\|)^{\delta}\le \lambda^{1/2}$,
and that $\delta$
satisfies the conclusion of
Lemma~\ref{ball:lemma} for $\lambda'=\lambda^{1/2}$. Let $F$ satisfy the hypothesis.

Denote $\Zb[F]/f_F\Zb[F]$ by $G$. Let $x\in G_{\rm tor}$. Take
$\tilde{x}\in \Zb[F]$ such that the image of $\tilde{x}$ in $G$ under the quotient
map $\Zb[F]\rightarrow G$ is
equal to $x$. Then
$$ k\tilde{x}=f_{F}w$$
for some positive integer $k$ and some $w\in \Zb[F]$. Write
$\frac{1}{k}w$ as $w_1+w_2$ for some $w_1\in \Zb[F]$ and $w_2\in
[0,1)^F$. Then $\tilde{x}=f_Fw_1+f_Fw_2$ and
$\|f_Fw_2\|_2\le \|f\|\cdot \|w_2\|_2\le \|f\|\cdot |F|^{1/2}$. Note that $\tilde{x}$ and
$f_Fw_2$ have the same image in $G$. Thus we may replace
$\tilde{x}$ by $f_Fw_2$ and hence assume that
$\|\tilde{x}\|_2\le \|f\|\cdot |F|^{1/2}$.

Denote by $\varphi$ the quotient map $\Rb[[\Gamma]]\rightarrow
(\Rb/\Zb)[[\Gamma]]$. We identify $\Cb[F]$ with a subspace of $\ell^2(\Gamma)$ naturally.
For any $x\in G_{\rm tor}$, we have
$$ f\varphi(f^{-1}\tilde{x})=\varphi(f(f^{-1}\tilde{x}))=\varphi(\tilde{x})=0$$
in $(\Rb/\Zb)[[\Gamma]]$, and hence $\varphi(f^{-1}\tilde{x})\in X_f$ by \eqref{E-X}.
This defines a map $\psi: G_{\rm
tor}\rightarrow X_f$ sending $x$ to $\varphi(f^{-1}\tilde{x})$.

For each $x\in G_{\rm tor}$, pick $w_x\in
\frac{1}{D}\Zb[KF\setminus F]$ such that
$$\|w_x-p_{KF\setminus
F}(f^{-1}\tilde{x})\|_{\infty}\le 1/D=\varepsilon/2$$
and $|w_x(t)|\le |(f^{-1}\tilde{x})(t)|$ for all $t\in KF\setminus F$.
Then $Dw_x\in \Zb[KF\setminus F]$ and
$$\|Dw_x\|_2\le D\|p_{KF\setminus F}(f^{-1}\tilde{x})\|_2\le
D\cdot \|f^{-1}\|\cdot \|\tilde{x}\|_2 \le D\cdot \|f\|\cdot \|f^{-1}\|\cdot |F|^{1/2}.$$
By
Lemma~\ref{ball:lemma} one has
\begin{eqnarray*}
|\{Dw_x: x\in G_{\rm tor}\}|
&\le & C\lambda^{|F|/2}(D\cdot \|f\|\cdot \|f^{-1}\|)^{|KF\setminus F|}\\
&\le & C\lambda^{|F|/2}(D\cdot \|f\|\cdot \|f^{-1}\|)^{\delta |F|}\\
&\le & C\lambda^{|F|}.
\end{eqnarray*}
Thus
we can find a subset $W\subseteq G_{\rm
tor}$ with $C\lambda^{|F|}|W|\ge |G_{\rm tor}|$ such that
$w_x=w_y$ for all $x, y\in W$.

Now it suffices to show that $\psi$ injects $W$ into an $[F,
\infty, \varepsilon]$-separated subset of $X_f$. Suppose that
$x\neq y$ in $W$ and $d_{F, \infty}(\psi(x), \psi(y))\le
\varepsilon$. From the definition of $d_{F, \infty}$ we have
\begin{align*}
d_{F, \infty}(\psi(x), \psi(y))&=\max_{\gamma \in F}\vartheta((\alpha_f)_\gamma(\psi(x)), (\alpha_f)_\gamma(\psi(y)))\\
&=\max_{\gamma\in F}\vartheta((\psi(x))_\gamma, (\psi(y))_\gamma).
\end{align*}
For each $\gamma\in F$, one gets
$$ \min_{m\in \Zb}|(f^{-1}\tilde{x})_\gamma-(f^{-1}\tilde{y})_\gamma-m|=\vartheta((\psi(x))_\gamma, (\psi(y))_\gamma)\le \varepsilon,$$
and thus there exists $h_\gamma \in \Zb$ with $|(f^{-1}\tilde{x})_\gamma-(f^{-1}\tilde{y})_\gamma-h_\gamma|\le \varepsilon$.
Define $h\in \Zb[F]$ to be the element with value $h_\gamma$ for every $\gamma \in F$.
Set
$$z=f^{-1}\tilde{x}-f^{-1}\tilde{y}-h\in \Rb[[\Gamma]].$$
Then $\|z|_{F}\|_{\infty}\le \varepsilon$.
Since $x$ and $y$ are in
$W$, we have $w_x=w_y$ and hence
\begin{eqnarray*}
\|z|_{KF\setminus F}\|_{\infty}&=&\|p_{KF\setminus
F}(f^{-1}\tilde{x})-p_{KF\setminus
F}(f^{-1}\tilde{y})\|_{\infty} \\
&\le& \|p_{KF\setminus
F}(f^{-1}\tilde{x})-w_x\|_{\infty}+\|p_{KF\setminus
F}(f^{-1}\tilde{y})-w_y\|_{\infty}\le \varepsilon.
\end{eqnarray*}

Write $z$ as $z_1+z_2$ such that the supports of $z_1$ and $z_2$
are contained in $KF$ and $\Gamma\setminus KF$ respectively. Note
that $p_F(fz)=p_F(fz_1)$ and
$\|z_1\|_{\infty}\le \varepsilon$. Consequently,
$$\|p_F(fz)\|_{\infty}=\|p_F(f z_1)\|_{\infty}\le \|f z_1\|_{\infty}\le \|f\|_1\cdot \|z_1\|_{\infty}\le \varepsilon \|f\|_1=1/2.$$
We have
$$ \tilde{x}-\tilde{y}=p_F(\tilde{x}-\tilde{y})=p_F(fh)+p_F(fz)=f_Fh+p_F(fz).$$
Since $\tilde{x}-\tilde{y}$ and $f_Fh$ are both in $\Zb[F]$, we
must have $p_F(fz)=0$. Therefore $\tilde{x}-\tilde{y}=f_Fh\in
f_F\Zb[F]$, contradicting the assumption $x\neq y$. This finishes the proof of
the lemma.
\end{proof}

\begin{theorem} \label{T-positive}
Let $\Gamma$ be an infinite amenable group and let $f\in \Zb\Gamma$ be positive and invertible in $\cL\Gamma$.
Let $\{F_n\}_{n\in J}$ be a (left) F{\o}lner net of $\Gamma$. Then for any $1/(2\|f\|_1)\ge \varepsilon>0$, one has
\begin{eqnarray*}
\rh(\alpha_f)&=&\lim_{n\to \infty}\frac{1}{|F_n|}\log s_{F_n, \infty}(\varepsilon)=\lim_{n\to \infty}\frac{1}{|F_n|}\log |\Zb[F_n]/f_{F_n}\Zb[F_n]|\\
\nonumber &=&\lim_{n\to \infty}\frac{1}{|F_n|}\log |\det f_{F_n}|=\log \ddet_{\cL\Gamma} f.
\end{eqnarray*}
\end{theorem}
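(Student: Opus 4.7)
The plan is to sandwich $\rh(\alpha_f)$ between $\log\ddet_{\cL\Gamma}f$ from both sides by combining the $\ell^{2}$-upper bound of Lemma~\ref{L-positive small} with the $\ell^{\infty}$-lower bound of Lemma~\ref{L-positive large}, and to use Theorem~\ref{T-sep} to identify the resulting quantities with $\rh(\alpha_f)=\htopol(\alpha_f)$. The two right-hand equalities of the theorem will then follow essentially for free: since $f_{F_n}$ is invertible by Lemma~\ref{L-Deninger}, the quotient $\Zb[F_n]/f_{F_n}\Zb[F_n]$ is a finite abelian group of order $|\det f_{F_n}|$ by Lemma~\ref{L-determinant quotient group size}, and Lemma~\ref{L-Deninger} gives $\log\ddet_{\cL\Gamma}f=\lim_n \frac{1}{|F_n|}\log|\det f_{F_n}|$.

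For the upper bound, I would fix arbitrary $\varepsilon>0$ and $\lambda>1$. Lemma~\ref{L-positive small} yields $s_{F_n,2}(\varepsilon)\le C\lambda^{|F_n|}|\Zb[F_n]/f_{F_n}\Zb[F_n]|$ for all $n$ large enough (so that $F_n$ satisfies the required invariance). Taking $\log$, dividing by $|F_n|$, and letting $n\to\infty$, then $\lambda\to 1$, then $\varepsilon\to 0$ produces
$$\lim_{\varepsilon\to 0}\limsup_{n\to\infty}\frac{1}{|F_n|}\log s_{F_n,2}(\varepsilon)\le \log\ddet_{\cL\Gamma}f.$$
Since the topology of $X_f$ is generated by $\vartheta$ under $\alpha_f$, Theorem~\ref{T-sep} applied with $p=2$ identifies the left-hand side with $\htopol(\alpha_f)=\rh(\alpha_f)$, yielding $\rh(\alpha_f)\le \log\ddet_{\cL\Gamma}f$.

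For the lower bound, I would set $\varepsilon_0:=1/(2\|f\|_1)$, fix $\lambda>1$, and apply Lemma~\ref{L-positive large}. Because $\Zb[F_n]/f_{F_n}\Zb[F_n]$ is finite and therefore coincides with its torsion subgroup, the lemma gives $C\lambda^{|F_n|}s_{F_n,\infty}(\varepsilon_0)\ge |\Zb[F_n]/f_{F_n}\Zb[F_n]|$ for all sufficiently invariant $F_n$. Taking $\log$, dividing by $|F_n|$, passing to $\liminf_n$, and sending $\lambda\to 1$ give
$$\liminf_{n\to\infty}\frac{1}{|F_n|}\log s_{F_n,\infty}(\varepsilon_0)\ge \log\ddet_{\cL\Gamma}f.$$
The monotonicity $s_{F,\infty}(\varepsilon)\ge s_{F,\infty}(\varepsilon_0)$ for $\varepsilon\le \varepsilon_0$ upgrades this to every $\varepsilon\in(0,\varepsilon_0]$; sending $\varepsilon\to 0$ and invoking Theorem~\ref{T-sep} with $p=\infty$ then yields $\rh(\alpha_f)\ge \log\ddet_{\cL\Gamma}f$.

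Combining the two bounds gives $\rh(\alpha_f)=\log\ddet_{\cL\Gamma}f$. For the remaining assertion that $\lim_n \frac{1}{|F_n|}\log s_{F_n,\infty}(\varepsilon)=\log\ddet_{\cL\Gamma}f$ at any fixed $\varepsilon\in(0,\varepsilon_0]$, the liminf already dominates $\log\ddet_{\cL\Gamma}f$ by the previous paragraph, while the limsup is controlled by
$$\limsup_{n\to\infty}\frac{1}{|F_n|}\log s_{F_n,\infty}(\varepsilon)\le \lim_{\varepsilon'\to 0}\limsup_{n\to\infty}\frac{1}{|F_n|}\log s_{F_n,\infty}(\varepsilon')=\rh(\alpha_f)$$
via monotonicity in $\varepsilon$ and Theorem~\ref{T-sep}. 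I expect the main delicacy to be the scale mismatch between the two estimates: Lemma~\ref{L-positive small} is naturally an $\ell^{2}$-statement because its proof counts integer-lattice points via the volume bound in Lemma~\ref{ball:lemma}, whereas Lemma~\ref{L-positive large} naturally operates at the $\ell^{\infty}$-scale; it is precisely Theorem~\ref{T-sep}, whose content is that the various $\ell^p$-entropy quantities all equal $\htopol(\alpha_f)$, that permits the two one-sided estimates to be glued together and to $\rh(\alpha_f)$.
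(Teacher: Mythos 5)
Your proposal is correct and follows essentially the same route as the paper: the upper bound from Lemma~\ref{L-positive small} together with Theorem~\ref{T-sep} at $p=2$, the lower bound from Lemma~\ref{L-positive large} (using invertibility of $f_{F_n}$ from Lemma~\ref{L-Deninger} to identify the torsion subgroup with the whole quotient) together with Theorem~\ref{T-sep} at $p=\infty$, and Lemma~\ref{L-Deninger} for the determinant identifications. The only cosmetic difference is that you convert both one-sided estimates into bounds against $\log\ddet_{\cL\Gamma}f$ right away via Lemma~\ref{L-Deninger}, whereas the paper first sandwiches $\lim_n\frac{1}{|F_n|}\log|\Zb[F_n]/f_{F_n}\Zb[F_n]|$ between bounds on $\rh(\alpha_f)$ and invokes Lemma~\ref{L-Deninger} only at the end; the content is the same.
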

\begin{proof} By Theorem~\ref{T-sep} and Lemma~\ref{L-positive small}, one has
$$ \rh(\alpha_f)\le \liminf_{n\to \infty}\frac{1}{|F_n|}\log |\Zb[F_n]/f_{F_n}\Zb[F_n]|.$$
By Lemma~\ref{L-Deninger}, each $f_{F_n}$ is invertible and hence $(\Zb[F_n]/f_{F_n}\Zb[F_n])_{\rm tor}=\Zb[F_n]/f_{F_n}\Zb[F_n]$.
Thus by Theorem~\ref{T-sep} and Lemma~\ref{L-positive large}, one has
$$ \rh(\alpha_f)\ge \limsup_{n\to \infty}\frac{1}{|F_n|}\log s_{F_n, \infty}(\varepsilon)\ge \limsup_{n\to \infty}\frac{1}{|F_n|}\log |\Zb[F_n]/f_{F_n}\Zb[F_n]|,$$
and
$$ \liminf_{n\to \infty}\frac{1}{|F_n|}\log s_{F_n, \infty}(\varepsilon)\ge \liminf_{n\to \infty}\frac{1}{|F_n|}\log |\Zb[F_n]/f_{F_n}\Zb[F_n]|.$$
Then the first two equalities  of the theorem follow. The last two equalities of the theorem come from Lemma~\ref{L-Deninger}.
\end{proof}

\section{Addition Formulas} \label{S-addition}

In this section we establish addition formulas for the entropy of group extensions,
in both topological and measure-theoretical settings (Theorems \ref{group extension1:thm}
and \ref{group extension2:thm}).
From these formulas we deduce
the Yuzvinski\u{\i} addition formula (Corollary~\ref{Yuzvinskii:cor})
and use it to obtain a formula for the entropy of products $fg$ (Corollaries~\ref{C-product} and \ref{zero divisor:cor}).
Throughout this section $\Gamma$ is a countable amenable group.

Let $\alpha_X$, $\alpha_Y$ and $\alpha_G$ be actions of $\Gamma$
on compact metrizable spaces $X$, $Y$ and $G$ by homeomorphisms respectively.
A {\it factor map} $X\rightarrow Y$ is a continuous surjective $\Gamma$-equivariant map.
We say that $\alpha_X$ is a {\it (right) $G$-extension of $\alpha_Y$} if
there are a factor map $\pi: X\rightarrow Y$ and a continuous
map $P: X\times G\rightarrow X$ sending $(x, g)$ to $xg$ such that
$\pi^{-1}(\pi(x))=xG$, $xg=xg'$ only when $g=g'$, and $\gamma(xg)=\gamma(x)\gamma(g)$
for all $x\in X$, $g, g'\in G$ and $\gamma \in \Gamma$.  (Usually $G$ is a compact metrizable group, $(xg)g'=x(gg')$, and
$\Gamma$ acts on $G$ by automorphisms; but this is not necessary.)  The case $\Gamma=\Zb$ of the following
theorem was proved by R. Bowen \cite[Theorem 19]{Bow}.

\begin{theorem}[Topological Addition Formula] \label{group extension1:thm}
Let $\alpha_X$, $\alpha_Y$ and $\alpha_G$ be actions of $\Gamma$
on compact metrizable spaces $X$, $Y$ and $G$ by homeomorphisms respectively.
If $\alpha_X$ is a $G$-extension of $\alpha_Y$, then $\htopol(\alpha_X)=\htopol(\alpha_Y)+\htopol(\alpha_G)$.
\end{theorem}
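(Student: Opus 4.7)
My plan is to prove the two inequalities $\htopol(\alpha_X)\le\htopol(\alpha_Y)+\htopol(\alpha_G)$ and $\htopol(\alpha_X)\ge\htopol(\alpha_Y)+\htopol(\alpha_G)$ separately, using Theorem~\ref{T-sep} with $p=\infty$ to compute each entropy via Bowen-style $[F,\infty,\varepsilon]$-spanning or -separated sets along a F{\o}lner sequence $\{F_n\}$ of $\Gamma$. Fix compatible metrics $d_X$, $d_Y$, $d_G$ on $X$, $Y$, $G$. A key auxiliary fact, proved by a compactness argument using the freeness $xg=xg'\Rightarrow g=g'$ and compactness of $X\times G\times G$, is the \emph{uniform inverse continuity}: for each $\eta>0$ there exists $\eta'>0$ such that $d_G(g,g')\ge\eta$ implies $d_X(zg,zg')\ge\eta'$ for every $z\in X$ (a failure would extract convergent subsequences whose limits violate freeness).

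For the upper bound, fix $\varepsilon>0$. Uniform continuity of $\pi$ and of $P$ yields $\delta>0$ so that $d_Y(\pi(x),y)<\delta$ permits approximating $x$ within $\varepsilon/2$ by some element of $\pi^{-1}(y)$, and $d_G(g,g')<\delta$ implies $d_X(zg,zg')<\varepsilon/2$ uniformly in $z\in X$. For each $n$, choose $[F_n,\infty,\delta]$-spanning sets $E_Y\subseteq Y$ and $E_G\subseteq G$ of minimal cardinality, with lifts $\tilde{y}\in\pi^{-1}(y)$ for every $y\in E_Y$. Using the equivariance $\gamma(xg)=\gamma(x)\gamma(g)$, one verifies that $\{\tilde{y}\cdot g:y\in E_Y,\ g\in E_G\}$ is $[F_n,\infty,\varepsilon]$-spanning in $X$, so that
\[
r_{d_X,F_n,\infty}(\varepsilon)\le r_{d_Y,F_n,\infty}(\delta)\cdot r_{d_G,F_n,\infty}(\delta).
\]
Taking logarithms, dividing by $|F_n|$, and letting first $n\to\infty$ and then $\varepsilon\to 0$ gives the upper bound via Theorem~\ref{T-sep}.

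For the lower bound---the more delicate direction---fix $\varepsilon>0$ and choose $[F_n,\infty,3\varepsilon]$-separated sets $E_Y\subseteq Y$, $E_G\subseteq G$ of maximal cardinality, with lifts $\tilde{y}\in\pi^{-1}(y)$. The central claim is that $T:=\{\tilde{y}\cdot g:y\in E_Y,\ g\in E_G\}$ is $[F_n,\infty,\varepsilon']$-separated in $X$ for some $\varepsilon'=\varepsilon'(\varepsilon)>0$ independent of $n$. Given distinct $\tilde{y}_1\cdot g_1$ and $\tilde{y}_2\cdot g_2$ in $T$: if $y_1\ne y_2$, some $\gamma\in F_n$ yields $d_Y(\gamma y_1,\gamma y_2)>3\varepsilon$, and uniform continuity of $\pi$ forces $d_X(\gamma(\tilde{y}_1 g_1),\gamma(\tilde{y}_2 g_2))$ to be bounded below; if $y_1=y_2$ but $g_1\ne g_2$, equivariance gives $\gamma(\tilde{y}_1 g_i)=\gamma(\tilde{y}_1)\cdot\gamma(g_i)$, so $d_X$-separation at time $\gamma$ reduces to $d_G(\gamma g_1,\gamma g_2)$-separation through the uniform inverse continuity above. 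Thus $s_{d_X,F_n,\infty}(\varepsilon')\ge|E_Y|\cdot|E_G|$, and Theorem~\ref{T-sep} yields $\htopol(\alpha_X)\ge\htopol(\alpha_Y)+\htopol(\alpha_G)$ after sending $\varepsilon\to 0$.

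I expect the main technical obstacles to be (a) the uniform inverse continuity itself, handled by the compactness argument sketched above; and (b) in the upper bound, writing an arbitrary $x\in X$ as $\tilde{y}\cdot h$ for $\tilde{y}$ a \emph{prescribed} lift of some $y\in E_Y$ close to $\pi(x)$. This requires first transporting $x$ to the fiber $\pi^{-1}(y)$ and then matching $h$ to an element of $E_G$; the mismatch between $x\in\pi^{-1}(\pi(x))$ and $\tilde{y}\in\pi^{-1}(y)$ with $y\ne\pi(x)$ must be controlled uniformly, using continuity of $\pi$ together with compactness of $X$ (which ensures a uniform modulus for selecting nearby fiber points as $y$ varies).
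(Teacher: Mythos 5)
Your lower-bound argument is correct and is essentially the paper's own: the paper also takes $(F,\varepsilon)$-separated sets $E_Y\subseteq Y$ and $E_G\subseteq G$, lifts $E_Y$ to a transversal $E_X$, and uses uniform continuity of $\pi$ together with the compactness-plus-freeness fact (your ``uniform inverse continuity'') to see that $E_XE_G$ is separated at a smaller uniform scale, giving $s_F(\delta,X)\ge s_F(\varepsilon,Y)\,s_F(\varepsilon,G)$.

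The upper bound, however, has a genuine gap. Your claim that $\{\tilde{y}g : y\in E_Y,\ g\in E_G\}$ is $[F_n,\infty,\varepsilon]$-spanning, with $\delta$ depending only on $\varepsilon$, does not follow from continuity of $\pi$ and $P$ and compactness. The obstruction you flag in (b) is not a time-$e_\Gamma$ matching problem but a shadowing problem over the whole window $F_n$: knowing $d_Y(\gamma\pi(x),\gamma y)\le\delta$ for all $\gamma\in F_n$ gives, for each individual $\gamma$, some point of the fibre $\pi^{-1}(\gamma y)=\gamma(\tilde{y})G$ near $\gamma x$, but that point is $\gamma(\tilde{y})h_\gamma$ with a correction $h_\gamma$ depending on $\gamma$, and nothing in the extension structure forces these corrections to be of the form $h_\gamma=\gamma(h)$ for a single $h\in G$. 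Concretely, for a skew product $\gamma(y,g)=(\gamma y,\sigma(\gamma,y)\gamma g)$ over a continuous cocycle $\sigma$ (which is a $G$-extension in the paper's sense), closeness of the fibre coordinates of $\gamma x$ and $\gamma(\tilde y g)$ requires $\sigma(\gamma,\pi(x))$ to be close to $\sigma(\gamma,y)$; the modulus of continuity of $\sigma(\gamma,\cdot)$ degrades as $\gamma$ ranges over larger sets (it is built from ever longer cocycle compositions), so no $\delta>0$ independent of $F_n$ makes the asserted inequality $r_{F_n,\infty}(\varepsilon)\le r^Y_{F_n,\infty}(\delta)\,r^G_{F_n,\infty}(\delta)$ valid; if it were, one would get the hard half of the addition formula for arbitrary group extensions by an elementary two-line argument, which is not the case. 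This is exactly why the paper does not argue this way: it derives $\htopol(\alpha_X)\le\htopol(\alpha_Y)+\htopol(\alpha_G)$ from the fibre-entropy inequality $\htopol(\alpha_X)\le\htopol(\alpha_Y)+\sup_{y\in Y}\htopol(\alpha_X|y)$ (Theorem~\ref{fibre:thm}), whose proof is measure-theoretic (variational principle, conditional entropy, and the Ward--Zhang/Danilenko addition formula via Lemmas~\ref{outer CVP:lemma} and \ref{inner VP:lemma}); the only spanning-set product it uses is inside a single fibre, where $zE$ $(F,\varepsilon)$-spans $zG=\pi^{-1}(y)$ exactly and no cross-fibre comparison is needed. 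To repair your proof you would either have to reprove such a fibre-entropy theorem or supply a uniform shadowing mechanism, neither of which is available from compactness alone.
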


Let $\alpha_Y$ be an action of $\Gamma$ on
a standard probability space $(Y, \cB_Y, \mu)$ by automorphisms.
Also let $\alpha_G$ be an action of $\Gamma$ on
a compact metrizable group $G$ as (continuous) automorphisms.
Endow $G$ with its Borel $\sigma$-algebra $\cB_G$ and normalized Haar measure $\nu$.
Note that every automorphism of $G$ preserves $\nu$.
A {\it cocycle}  for
$\alpha_Y$ and $\alpha_G$ is a measurable map $\sigma: \Gamma\times Y\rightarrow G$
such that
\begin{eqnarray} \label{cocycle:eq}
\sigma(\gamma_1\gamma_2, y)=\sigma(\gamma_1, \gamma_2y)\cdot \gamma_1(\sigma(\gamma_2, y))
\end{eqnarray}
for all $\gamma_1, \gamma_2\in \Gamma$ and $y\in Y$.
Given a cocycle $\sigma$, one can define a {\it skew product} action $\alpha_Y\times_{\sigma} \alpha_G$ of $\Gamma$ on the standard probability space $(Y\times G, \cB_Y\times \cB_G, \mu \times \nu)$
by automorphisms, by
\begin{eqnarray} \label{extension:eq}
 \gamma(y, g)=(\gamma y, \sigma(\gamma, y)\cdot (\gamma g))
\end{eqnarray}
for $\gamma\in \Gamma$, $y\in Y$ and $g\in G$. It is clear that the projection $Y\times G\rightarrow Y$ is a {\it factor map} for the actions
$\alpha_Y\times_{\sigma}\alpha_G$ and $\alpha_Y$
in the sense that it is $\Gamma$-equivariant, measurable and measure-preserving. The action $\alpha_Y\times_{\sigma}\alpha_G$ is called
a {\it group extension} of the action $\alpha_Y$.  The case $\Gamma=\Zb$ of the following theorem
was proved by Thomas \cite{Thomas2}, and the case $\Gamma=\Zb^d$ for $2\le d<\infty$ was proved by Lind et al. \cite[Theorem B.1]{LSW}.

\begin{theorem}[Measure-theoretical Addition Formula] \label{group extension2:thm}
Let $\alpha_Y$ and $\alpha_G$ be actions of $\Gamma$ on a standard probability space $(Y, \cB_Y, \mu)$ and  a compact metrizable group
$G$ by automorphisms respectively. Let $\sigma$ be a cocycle for $\alpha_Y$ and $\alpha_G$. Then
$$ \rh_{\mu\times \nu}(\alpha_Y\times_{\sigma} \alpha_G)=\rh_{\mu}(\alpha_Y)+\rh(\alpha_G).$$
\end{theorem}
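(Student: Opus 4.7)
The plan is to combine the Abramov--Rokhlin formula for countable amenable group actions with a fibrewise identification of the conditional entropy. Let $\pi\colon Y\times G\to Y$ denote the coordinate projection; it is a measure-theoretic factor map from $(\alpha_Y\times_\sigma\alpha_G,\mu\times\nu)$ onto $(\alpha_Y,\mu)$. First I would invoke the Abramov--Rokhlin type addition formula for $\Gamma$-extensions of amenable group actions proved by Ward--Zhang and Danilenko via the Rudolph--Weiss orbit-equivalence method to obtain
\[
\rh_{\mu\times\nu}(\alpha_Y\times_\sigma\alpha_G)=\rh_\mu(\alpha_Y)+\rh_{\mu\times\nu}(\alpha_Y\times_\sigma\alpha_G\,|\,\pi^{-1}\cB_Y),
\]
which reduces the theorem to the identity
\[
\rh_{\mu\times\nu}(\alpha_Y\times_\sigma\alpha_G\,|\,\pi^{-1}\cB_Y)=\rh(\alpha_G). \qquad(\star)
\]

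To establish $(\star)$ I would disintegrate $\mu\times\nu$ along $\pi$: by Fubini, the conditional measure on each fibre $\{y\}\times G\cong G$ is the Haar measure $\nu$. For a finite Borel partition $\cQ$ of $G$ with lift $\tilde\cQ=\{Y\times Q:Q\in\cQ\}$, a direct computation using $\gamma(y,g)=(\gamma y,\sigma(\gamma,y)\gamma g)$ then yields, for every nonempty finite $F\subseteq\Gamma$,
\[
\rH_{\mu\times\nu}\bigl(\tilde\cQ^{F}\,\big|\,\pi^{-1}\cB_Y\bigr)=\int_Y \rH_\nu\Bigl(\bigvee_{\gamma\in F}\gamma^{-1}L_{\sigma(\gamma,y)^{-1}}\cQ\Bigr)\,d\mu(y),
\]
where $L_h$ denotes left translation by $h\in G$. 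The left-invariance of $\nu$ gives $\rH_\nu(L_h\cQ')=\rH_\nu(\cQ')$ for every partition $\cQ'$, which is the structural reason the fibrewise contribution should match $\rh(\alpha_G)$.

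For the upper bound in $(\star)$ I would observe that any finite Borel partition of $Y\times G$ can be approximated in conditional entropy by lifts of partitions of $G$ (since $\cB_G$-measurable functions generate $L^1(\mu\times\nu)$ over $\pi^{-1}\cB_Y$ up to null sets), so it suffices to control the displayed integral from above by $|F|\,\rh_\nu(\alpha_G,\cQ)+o(|F|)$; this uses left-invariance of $\nu$ together with the Ornstein--Weiss subadditivity lemma (Proposition~\ref{subadditive:prop}). For the lower bound, choose $\cQ$ approximating $\rh(\alpha_G)$ and use the framework of fibre and conditional entropies from \cite{DS} in combination with the cocycle identity \eqref{cocycle:eq} to average over $y$ and extract the lower bound $\rh_\nu(\alpha_G,\cQ)$.

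The main obstacle will be the $\ge$ direction of $(\star)$: the twists $\sigma(\gamma,y)^{-1}$ depend on both $\gamma$ and $y$ in a cocycle-coherent manner, so a pointwise comparison between the twisted join $\bigvee_{\gamma\in F}\gamma^{-1}L_{\sigma(\gamma,y)^{-1}}\cQ$ and the untwisted join $\bigvee_{\gamma\in F}\gamma^{-1}\cQ$ is neither available nor true in general. Handling this correctly appears to require a relative Shannon--McMillan--Breiman type statement for amenable group actions combined with the Rudolph--Weiss orbit-equivalence reductions already invoked in the first step, packaged within the generalization of fibre and conditional entropies of Danilenko--Seltser. The topological addition formula Theorem~\ref{group extension1:thm} will supply the companion estimate needed to close the gap, via the comparison between topological and measure-theoretic entropies on suitable compact models.
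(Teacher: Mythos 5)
Your opening move coincides with the paper's: the Ward--Zhang/Danilenko formula reduces everything to the identity $(\star)$, namely $\rh_{\mu\times\nu}(\alpha_Y\times_\sigma\alpha_G\,|\,\cB_Y)=\rh(\alpha_G)$. But your treatment of $(\star)$ has a genuine gap in both directions, and your sense of where the difficulty sits is inverted. For the upper bound, reducing to lifted partitions $\tilde\cQ$ is fine, but the claim that the integral $\int_Y \rH_\nu\bigl(\bigvee_{\gamma\in F}\gamma^{-1}L_{\sigma(\gamma,y)^{-1}}\cQ\bigr)\,d\mu(y)$ is at most $|F|\,\rh_\nu(\alpha_G,\cQ)+o(|F|)$ ``by left-invariance and Ornstein--Weiss'' is unsupported: invariance of $\mu\times\nu$ and the cocycle identity show only that $F\mapsto\rH_{\mu\times\nu}(\tilde\cQ^F|\cB_Y)$ satisfies the hypotheses of Proposition~\ref{subadditive:prop} (the cocycle identity turns the twisted join over a block $F_0c$ at $y$ into, up to one translation and the automorphism $c$, the twisted join over $F_0$ at $cy$), so the normalized limit exists --- but nothing in that argument compares the twisted join with the untwisted one for a fixed $\cQ$, which is precisely the pointwise comparison you yourself observe is unavailable. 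This $\le$ direction is in fact the hard one, and the paper proves it by first building a topological model for the merely measurable data (Lemma~\ref{model:lemma}: a compact metrizable $Y'\supseteq Y$ on which the action, the measure and the cocycle all extend, with $\sigma$ becoming continuous; the construction goes through the Gelfand spectrum of a separable $C^*$-algebra of bounded Borel functions), and then running the topological fibre-entropy machinery on the model: $\rh_{\mu\times\nu}(\alpha_Y\times_\sigma\alpha_G|\cB_Y)\le\htopol(\alpha_Y\times_\sigma\alpha_G|\mu)\le\sup_{y}\htopol(\alpha_Y\times_\sigma\alpha_G|y)=\htopol(\alpha_G)$ via Lemmas~\ref{inner VP:lemma}, \ref{outer CVP:lemma} and \ref{fibre:lemma}, the last of which is exactly the byproduct of the proof of Theorem~\ref{group extension1:thm} you hoped would ``close the gap.'' Your proposal never constructs such a compact model, and without continuity of the cocycle none of that machinery applies; this is the missing idea.

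Conversely, the direction you single out as the main obstacle, $\rh_{\mu\times\nu}(\alpha_Y\times_\sigma\alpha_G|\cB_Y)\ge\rh(\alpha_G)$, needs neither a relative Shannon--McMillan--Breiman theorem nor further orbit-equivalence input: the paper's Lemma~\ref{conditional:lemma} gives a short direct estimate. Choose a left-invariant compatible metric $d_G$, a maximal $(F,\varepsilon)$-separated set $E\subseteq G$, and $V=\{g\in G:\max_{\gamma\in F}d_G(\gamma g,e_G)\le\varepsilon/2\}$, so that $\nu(V)\le|E|^{-1}$; for a partition whose items have diameter at most $\varepsilon/2$, left-invariance of $d_G$ cancels the twist $\sigma(\gamma,y)$, each fibre of an item of the iterated partition is contained in a translate of $V$, hence $\Eb(1_P|\cB_Y)\le|E|^{-1}$ almost everywhere and $\rH_{\mu\times\nu}(\cP^F|\cB_Y)\ge\log s_F(\varepsilon,G)$; Lemma~\ref{Bowen:lemma} and the coincidence of topological and measure entropy for $\alpha_G$ finish it. (Notably, this half uses no continuity of $\sigma$ at all.) A small attribution slip: the fibre/conditional entropy framework you invoke is due to Downarowicz and Serafin \cite{DS}, not ``Danilenko--Seltser.'' In sum, the proposal reproduces the paper's first reduction but leaves both halves of $(\star)$ unproven: the $\le$ half for lack of the compact-model construction and the fibre-entropy variational argument, and the $\ge$ half for lack of the separated-set estimate.
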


As a direct consequence of Theorem~\ref{group extension1:thm} we obtain
the following Yuzvinski\u{\i} addition formula,
for which the case
$\Gamma=\Zb$ was proved by Yuzvinski\u{\i} \cite{Yuz1} and
the case $\Gamma=\Zb^d$ for $2\le d<\infty$ was proved by Lind et al. \cite[Corollary B.2]{LSW} (see also \cite[Theorem 14.1]{Sch}).
The case $\Gamma=\Zb^{\infty}$ and $G$ is abelian was proved by Miles \cite[Proposition 5.1]{Miles}.
The case $\Gamma$ is locally normal and $G$ is abelian and zero-dimensional was proved by Miles and Bj\"{o}rklund \cite[Theorem 3.1]{MB}.

\begin{corollary}[Yuzvinski\u{\i} Addition Formula] \label{Yuzvinskii:cor}
Let $\alpha_{G_1}$, $\alpha_{G_2}$ and $\alpha_{G_3}$ be actions of $\Gamma$ on
compact metrizable groups $G_1, G_2, G_3$
as (continuous) automorphisms respectively.
Suppose that there is
a $\Gamma$-equivariant short exact sequence of compact groups
$$ 1\longrightarrow G_1\longrightarrow G_2\longrightarrow G_3\longrightarrow 1.$$
Then
$\rh(\alpha_{G_2})=\rh(\alpha_{G_1})+\rh(\alpha_{G_3})$.
\end{corollary}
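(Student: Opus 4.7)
The plan is to realize the given short exact sequence as a topological group extension in the sense required by Theorem~\ref{group extension1:thm}, and then invoke that theorem together with the coincidence of topological and measure entropy for automorphic actions on compact groups.

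First, identifying $G_1$ with its (closed, normal) image in $G_2$, I would set $G_3 = G_2/G_1$ and take $\pi\colon G_2 \to G_3$ to be the canonical quotient map. Being the quotient of a compact metrizable group by a closed subgroup, $G_3$ is again compact and metrizable. By the $\Gamma$-equivariance built into the short exact sequence, $\pi$ is $\Gamma$-equivariant (and automatically continuous and surjective), hence a factor map from $\alpha_{G_2}$ onto $\alpha_{G_3}$, while $\alpha_{G_2}$ restricts to $\alpha_{G_1}$ on $G_1$.

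Next I would verify that $\alpha_{G_2}$ is a (right) $G_1$-extension of $\alpha_{G_3}$ by taking $P\colon G_2 \times G_1 \to G_2$ to be group multiplication in $G_2$, i.e.\ $P(x, g) = xg$. The three conditions listed before Theorem~\ref{group extension1:thm} are immediate: the fibers of $\pi$ are exactly the right cosets $xG_1$; $xg = xg'$ forces $g = g'$ by cancellation in $G_2$; and each $\gamma \in \Gamma$ acts on $G_2$ as an automorphism preserving $G_1$, so $\gamma(xg) = \gamma(x)\gamma(g)$, which supplies the required equivariance of $P$. Applying Theorem~\ref{group extension1:thm} then yields
$$\htopol(\alpha_{G_2}) = \htopol(\alpha_{G_3}) + \htopol(\alpha_{G_1}).$$

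To conclude, I would invoke the coincidence of topological and measure entropy for $\Gamma$-actions on compact metrizable groups by automorphisms, recalled in Section~\ref{SS-entropy} and attributed there to Deninger: for each $j \in \{1, 2, 3\}$ we have $\htopol(\alpha_{G_j}) = \rh(\alpha_{G_j})$. Substituting these equalities in the display above produces the desired identity $\rh(\alpha_{G_2}) = \rh(\alpha_{G_1}) + \rh(\alpha_{G_3})$. Since the real work sits inside Theorem~\ref{group extension1:thm}, there is no genuine obstacle to overcome here; the only point requiring any care is the verification of the $G_1$-extension structure, which amounts to observing that the quotient map of compact groups is a principal $G_1$-bundle whose right $G_1$-action is compatible with the $\Gamma$-actions.
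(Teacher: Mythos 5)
Your argument is correct, and it is precisely the route the paper itself gestures at when it calls the corollary ``a direct consequence of Theorem~\ref{group extension1:thm}'': you verify that the quotient map $\pi\colon G_2\to G_2/G_1\cong G_3$ together with right multiplication $P(x,g)=xg$ makes $\alpha_{G_2}$ a $G_1$-extension of $\alpha_{G_3}$ (the fibers are the cosets $xG_1$, cancellation gives injectivity in $g$, and equivariance follows because each $\gamma$ is an automorphism of $G_2$ preserving $G_1$), apply the topological addition formula, and then use Deninger's identity $\htopol=\rh_\mu$ for automorphic actions on compact groups, which is already built into the notation $\rh(\cdot)$. The paper's only written-out proof, however, takes the other advertised route, through the measure-theoretical addition formula (Theorem~\ref{group extension2:thm}): it chooses a Borel cross section $\psi\colon G_3\to G_2$ via Arveson's theorem, forms the cocycle $\sigma(\gamma,g_3)=(\psi(\gamma g_3))^{-1}\cdot\gamma(\psi(g_3))$, checks that $(g_3,g_1)\mapsto\psi(g_3)g_1$ is a measure isomorphism carrying $\nu_3\times\nu_1$ to $\nu_2$ and intertwining $\alpha_{G_3}\times_\sigma\alpha_{G_1}$ with $\alpha_{G_2}$, and then applies Theorem~\ref{group extension2:thm}. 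Your approach buys simplicity: no measurable selection, no cocycle bookkeeping, only the purely topological extension structure, at the cost of leaning on the topological addition formula and the topological--measure entropy coincidence. The paper's written proof, by contrast, shows the corollary already follows from the measure-theoretic addition formula alone, which is worth knowing since Theorems~\ref{group extension1:thm} and \ref{group extension2:thm} are logically independent statements in the paper; both derivations are legitimate, and yours fills in exactly the verification that the paper's ``direct consequence'' remark leaves implicit.
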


One can also obtain Corollary~\ref{Yuzvinskii:cor} from Theorem~\ref{group extension2:thm} via a standard procedure, as follows.

\begin{proof}[Proof of Corollary~\ref{Yuzvinskii:cor} using Theorem~\ref{group extension2:thm}]
We may identify $G_1$ with its image in $G_2$.
Denote by $\pi$ the map $G_2\rightarrow G_3$.
Every continuous open surjective map between compact metrizable spaces has a Borel cross section \cite[Theorem 3.4.1]{Arveson}.
Thus we can find a Borel map
$\psi: G_3\rightarrow G_2$ such that $\pi \circ \psi$ is the identity map on $G_3$.
It is easily verified that the map $\phi: G_3\times G_1\rightarrow G_2$ sending $(g_3, g_1)$ to $\psi(g_3)g_1$
is an isomorphism from the measurable space $(G_3\times G_1, \cB_{G_3}\times \cB_{G_1})$ onto
the measurable space $(G_2, \cB_{G_2})$. Furthermore, denoting
the normalized Haar measure on $G_j$ by $\nu_j$, one sees that $\phi(\nu_3\times \nu_1)$ is left-translation invariant
and hence $\phi(\nu_3\times \nu_1)=\nu_2$. It is also readily checked that the map $\sigma:\Gamma\times G_3 \to G_1$
defined by $\sigma(\gamma, g_3)=(\psi(\gamma g_3))^{-1}\cdot \gamma(\psi(g_3))$ is a cocycle for the actions $\alpha_{G_3}$ and $\alpha_{G_1}$,
and that $\phi$ intertwines the actions $\alpha_{G_3}\times_{\sigma}\alpha_{G_1}$ and $\alpha_{G_2}$.
Thus $\rh(\alpha_{G_2})=\rh_{\nu_3\times \nu_1}(\alpha_{G_3}\times_{\sigma} \alpha_{G_1})$.
Theorem~\ref{group extension2:thm} implies that $\rh_{\nu_3\times \nu_1}(\alpha_{G_3}\times_{\sigma} \alpha_{G_1})=\rh_{\nu_3}(\alpha_{G_3})
+\rh(\alpha_{G_1})$. Therefore, $\rh(\alpha_{G_2})=\rh(\alpha_{G_3})+\rh(\alpha_{G_1})$ as desired.
\end{proof}

Now we use Corollary~\ref{Yuzvinskii:cor} to obtain a formula for the entropy of $fg$.
Recall that an element $b$ of a ring $R$ is called a {\it right zero divisor} if
$ab=0$ for some non-zero element $a$ of $R$. The following result was pointed out by Deninger \cite[page 757]{Den}. For the convenience of the reader, we give a proof here.

\begin{lemma} \label{exact:lemma}
Let $f, g\in \Zb\Gamma$. Then one has a $\Gamma$-equivariant short sequence of compact groups
$$ 1\longrightarrow X_g\longrightarrow X_{fg}\longrightarrow
X_f\rightarrow 1,$$
where the homomorphism $ X_{fg}\rightarrow X_f$ is given by left multiplication by $g$.
It is exact at $X_g$ and $X_{fg}$. If furthermore $g$ is not a right zero divisor of $\Zb\Gamma$, then the above sequence
is exact.
\end{lemma}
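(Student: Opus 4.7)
The plan is to work with the identification $X_h = \{x \in (\Rb/\Zb)^\Gamma : hx = 0\}$ from \eqref{E-X}. Since $g \in \Zb\Gamma$ has finite support, the product $gx$ makes sense in $(\Rb/\Zb)^\Gamma$ for every $x$, and associativity $f(gx) = (fg)x$ shows simultaneously that $X_g \subseteq X_{fg}$ and that $h \mapsto gh$ defines a map $X_{fg} \to X_f$. Both are group homomorphisms, and they intertwine the right shift action because left multiplication by a fixed element of $\Zb\Gamma$ commutes with right shifts. Exactness at $X_g$ is the injectivity of the inclusion, and exactness at $X_{fg}$ amounts to the tautology that the kernel of $h \mapsto gh$ on $X_{fg}$ equals $\{h \in (\Rb/\Zb)^\Gamma : gh = 0\} = X_g$. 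Neither step uses any hypothesis on $g$.

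For the surjectivity at $X_f$ under the assumption that $g$ is not a right zero divisor, the plan is to apply Pontryagin duality to the short exact sequence of left $\Zb\Gamma$-modules
$$ 0 \longrightarrow \Zb\Gamma/\Zb\Gamma f \xrightarrow{\phi} \Zb\Gamma/\Zb\Gamma fg \xrightarrow{\pi} \Zb\Gamma/\Zb\Gamma g \longrightarrow 0,$$
where $\phi([a]_f) = [ag]_{fg}$ is right multiplication by $g$ and $\pi$ is the natural projection, well-defined because $\Zb\Gamma fg \subseteq \Zb\Gamma g$. Surjectivity of $\pi$ and exactness in the middle follow from the definitions; the only step that uses the hypothesis is injectivity of $\phi$: if $ag = bfg$, then $(a-bf)g = 0$, and the non-right-zero-divisibility of $g$ forces $a = bf \in \Zb\Gamma f$, so $[a]_f = 0$. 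Pontryagin duality is exact on short exact sequences of countable discrete abelian groups and reverses arrows, so it produces a short exact sequence of compact abelian groups.

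The main obstacle is to match the dualized maps with the maps stated in the lemma. The natural identification coming from duality is $\widehat{\Zb\Gamma/\Zb\Gamma h} = \{x \in (\Rb/\Zb)^\Gamma : xh^* = 0\}$, which differs from the paper's $X_h$ by the adjoint automorphism $x \mapsto \tilde x$ with $\tilde x_\gamma = x_{\gamma^{-1}}$. A direct index computation using the convention $(ab)_{\gamma'} = \sum_\gamma a_\gamma b_{\gamma^{-1}\gamma'}$ shows that the dual of $\pi$ corresponds to set inclusion, while the dual of $\phi$ corresponds to the map $x \mapsto xg^*$; the adjoint then converts this last map into left multiplication $x \mapsto gx$, since $\widetilde{xg^*} = g\tilde x$. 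This yields the desired exact sequence $1 \to X_g \to X_{fg} \to X_f \to 1$ with the stated maps, and $\Gamma$-equivariance is automatic because the whole construction is compatible with the right shift action.
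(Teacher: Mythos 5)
Your proposal is correct and follows essentially the same route as the paper: both prove the statement by applying Pontryagin duality to the dual sequence $0 \leftarrow \Zb\Gamma/\Zb\Gamma g \leftarrow \Zb\Gamma/\Zb\Gamma fg \leftarrow \Zb\Gamma/\Zb\Gamma f \leftarrow 0$ with the middle arrow given by right multiplication by $g$, and the only place the hypothesis enters is the identical cancellation argument $(a-bf)g=0 \Rightarrow a\in\Zb\Gamma f$. The only difference is organizational: you verify exactness at $X_g$ and $X_{fg}$ directly on the compact side and spell out the matching of the dualized maps with the stated ones via the adjoint $x\mapsto\tilde x$, details the paper leaves implicit after its identification \eqref{E-X}.
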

\begin{proof}
The dual sequence of the above one is the following
\begin{eqnarray} \label{exact:eq}
0\longleftarrow \Zb\Gamma/\Zb\Gamma g\longleftarrow \Zb\Gamma/\Zb\Gamma
fg\longleftarrow \Zb\Gamma/\Zb\Gamma f\longleftarrow
0,
\end{eqnarray}
where the homomorphism $\Zb\Gamma/\Zb\Gamma fg \leftarrow \Zb\Gamma/\Zb\Gamma f$ is given by right multiplication by $g$.
By the Pontryagin duality it suffices to show that
(\ref{exact:eq}) is exact at the corresponding places. Clearly it is exact at
$\Zb\Gamma/\Zb\Gamma g$ and $\Zb\Gamma/\Zb\Gamma fg$. Now assume that $g$ is not a right zero divisor of $\Zb\Gamma$.
Suppose that
$x\in \Zb\Gamma/\Zb\Gamma f$ and $xg=0$ in
$\Zb\Gamma/\Zb\Gamma fg$. Say, $x$ is represented by $\tilde{x}$
in $\Zb\Gamma$. Then $\tilde{x}g=\tilde{z}fg$ in $\Zb\Gamma$
for some $\tilde{z}\in \Zb\Gamma$. Since $g$ is not a right zero divisor in $\Zb\Gamma$,
we have $\tilde{x}=\tilde{z}f$ in $\Zb\Gamma$.
Consequently, $x=0$ and hence (\ref{exact:eq}) is also exact at
$\Zb\Gamma/\Zb\Gamma f$.
\end{proof}

If $\alpha$ is an action of $\Gamma$ on a compact Hausdorff space $X$ by homeomorphisms, and $Y$ is a closed
invariant subspace of $X$, then $\alpha$ restricts to an action $\beta$ of $\Gamma$ on $Y$, and
from the definition of topological entropy one can see easily that $\htopol(\alpha)\ge \htopol(\beta)$.
Combining this fact with Corollary~\ref{Yuzvinskii:cor} and
Lemma~\ref{exact:lemma} we obtain the following product formula.

\begin{corollary} \label{C-product}
Let $f, g\in \Zb\Gamma$. Then $\rh(\alpha_{fg})\le \rh(\alpha_f)+\rh(\alpha_g)$.
If furthermore $g$ is not a right zero divisor in $\Zb\Gamma$, then $\rh(\alpha_{fg})=\rh(\alpha_f)+\rh(\alpha_g)$.
\end{corollary}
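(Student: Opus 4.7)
The plan is to feed the short sequence provided by Lemma~\ref{exact:lemma} into the Yuzvinski\u{\i} addition formula (Corollary~\ref{Yuzvinskii:cor}), and then combine this with the monotonicity of topological entropy under passage to a closed invariant subsystem.

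First I would let $H \subseteq X_f$ denote the image of the homomorphism $X_{fg} \to X_f$ given by left multiplication by $g$. Since $X_{fg}$ is compact and the map is continuous, $H$ is automatically closed in $X_f$. A short direct calculation from the definitions of the convolution product and of the right-shift action shows that this map is $\Gamma$-equivariant: expanding both sides of $\gamma \cdot (gx) = g \cdot (\gamma x)$ reduces to the identity $\sum_{\gamma'} g_{\gamma'} x_{\gamma'^{-1} \gamma'' \gamma}$ on each coordinate $\gamma''$. Therefore $H$ is a closed $\Gamma$-invariant subgroup of $X_f$, and by Lemma~\ref{exact:lemma} the induced sequence
$$ 1 \longrightarrow X_g \longrightarrow X_{fg} \longrightarrow H \longrightarrow 1 $$
is a $\Gamma$-equivariant short exact sequence of compact metrizable groups: exactness at $H$ is automatic from the definition of $H$, while exactness at $X_g$ and $X_{fg}$ is the content of Lemma~\ref{exact:lemma}.

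Applying Corollary~\ref{Yuzvinskii:cor} to this sequence then yields
$$ \rh(\alpha_{fg}) = \rh(\alpha_g) + \rh(\alpha_f|_H), $$
where $\alpha_f|_H$ denotes the restriction of $\alpha_f$ to $H$. The remark immediately preceding the statement of Corollary~\ref{C-product} (that topological entropy cannot increase upon restriction to a closed invariant subspace) gives $\rh(\alpha_f|_H) \le \rh(\alpha_f)$, producing the inequality $\rh(\alpha_{fg}) \le \rh(\alpha_f) + \rh(\alpha_g)$. For the equality statement, when $g$ is not a right zero divisor in $\Zb\Gamma$, Lemma~\ref{exact:lemma} asserts that the original sequence is exact also at $X_f$, i.e.\ $H = X_f$; then $\rh(\alpha_f|_H) = \rh(\alpha_f)$ and equality holds.

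There is no genuine obstacle here once Corollary~\ref{Yuzvinskii:cor} and Lemma~\ref{exact:lemma} are in hand: the proof is almost entirely bookkeeping, and the only nontrivial point is the verification that the image $H$ of the multiplication-by-$g$ map is a closed $\Gamma$-invariant subgroup, so that the Yuzvinski\u{\i} addition formula may legally be applied to the truncated sequence.
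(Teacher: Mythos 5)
Your proposal is correct and follows essentially the same route as the paper: the paper's proof is precisely the one-sentence combination of Lemma~\ref{exact:lemma}, Corollary~\ref{Yuzvinskii:cor}, and the monotonicity of topological entropy under restriction to a closed invariant subspace, and your write-up just fills in the routine details (passing to the closed $\Gamma$-invariant image $H=gX_{fg}\subseteq X_f$ so that the truncated sequence $1\to X_g\to X_{fg}\to H\to 1$ is exact, with $H=X_f$ when $g$ is not a right zero divisor). There is no gap.
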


The {\it zero divisor conjecture} states that for any torsion-free group $H$, the group ring $\Zb H$ has no nontrivial right zero divisors.
See \cite[page 376--379]{Luck} and \cite[page 62--63]{MV} for relation between the zero divisor conjecture and other conjectures
such as the (strong) Atiyah conjecture and the embedding conjecture. Recall that the class of {\it elementary amenable groups}
is the smallest class of groups containing all cyclic and all finite groups and being closed under taking group extensions and direct unions.
Because of Linnell's work on the strong
Atiyah conjecture \cite{Linnell} (see also \cite{Schick1, DLMSY}), we know that the zero divisor conjecture holds for all torsion-free groups in the smallest class of groups
containing all free groups and being closed under extensions with elementary amenable quotients and under direct unions.
In particular, the zero divisor conjecture holds for all torsion-free elementary amenable groups.
See also \cite[Chapter 13]{Passman} for work on the zero divisor problem of $KH$ for a field $K$ and a group $H$.

If $f=0$ in $\Zb \Gamma$, then $\alpha_f$ is the full shift action of $\Gamma$ on $\Tb^{\Gamma}$ and hence
$\rh(\alpha_f)=\infty$. Thus we have

\begin{corollary} \label{zero divisor:cor}
Suppose that $\Gamma$ is torsion-free and satisfies the zero divisor conjecture. Then for
any $f, g\in \Zb\Gamma$, one has $\rh(\alpha_{fg})=\rh(\alpha_f)+\rh(\alpha_g)$.
\end{corollary}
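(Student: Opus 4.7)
The plan is to reduce to Corollary~\ref{C-product} by a short case split on whether $f$ and $g$ vanish. The key input is that the hypothesis on $\Gamma$ converts the conditional statement of Corollary~\ref{C-product} into an unconditional one, modulo the degenerate case $g=0$, which we handle by hand using the observation (just made in the text) that $\alpha_0$ is the full shift on $\Tb^{\Gamma}$ and hence has infinite entropy.

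First I would dispose of the degenerate cases. If $g=0$, then $fg=0$, so $\rh(\alpha_{fg})=\rh(\alpha_g)=\infty$, and since adding anything in $[0,\infty]$ to $\infty$ yields $\infty$, the formula holds trivially. If instead $f=0$ and $g\ne 0$, then $fg=0\cdot g=0$, so $\rh(\alpha_{fg})=\rh(\alpha_f)=\infty$, and again both sides are $\infty$.

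The remaining, main case is $f\ne 0$ and $g\ne 0$. Since $\Gamma$ is torsion-free and satisfies the zero divisor conjecture, $\Zb\Gamma$ has no nontrivial zero divisors; in particular, for any nonzero $a\in \Zb\Gamma$ one has $ag\ne 0$, so $g$ is not a right zero divisor in $\Zb\Gamma$. Corollary~\ref{C-product} then applies and yields $\rh(\alpha_{fg})=\rh(\alpha_f)+\rh(\alpha_g)$.

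There is essentially no obstacle here beyond careful bookkeeping: the real work has already been absorbed into Corollary~\ref{C-product}, which in turn rests on the Yuzvinski\u{\i} addition formula (Corollary~\ref{Yuzvinskii:cor}) applied to the short exact sequence of Lemma~\ref{exact:lemma}. The only subtlety worth flagging is to verify that in the degenerate cases one genuinely has $fg=0$ in $\Zb\Gamma$ (which is trivial) and that $\rh(\alpha_0)=\infty$ (which is the full-shift remark preceding the corollary); the zero divisor hypothesis is used only to remove the ``not a right zero divisor'' assumption in Corollary~\ref{C-product}.
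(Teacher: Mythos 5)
Your proof is correct and follows essentially the same route as the paper: the zero divisor hypothesis makes every nonzero $g$ a non-right-zero-divisor so that Corollary~\ref{C-product} applies, while the degenerate cases with $f=0$ or $g=0$ are settled by the remark that $\alpha_0$ is the full shift on $\Tb^{\Gamma}$ and hence has infinite entropy. Nothing is missing.
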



R. Bowen's proof of Theorems~\ref{group extension1:thm} in the case $\Gamma=\Zb$ is purely topological,
while the proofs of Thomas and Lind et al. for Theorem~\ref{group extension2:thm} in the case
$\Gamma=\Zb^d$ is purely using ergodic theory and depends on a technique of Yuzvinski\u{\i} reducing
$G$ to simpler compact groups. Our proof for these addition formulas, in each setting,
employ both topological and measure-theoretical tools. There are two main tools used in our proof.
One is Ward and Zhang's addition formula \cite[Theorem 4.4]{WZ} (see also \cite[Theorem 0.2]{Dan}), a generalization of the
Abramov-Rohlin addition formula. Another
is the various kinds of fibre entropy
for topological extensions. In particular, our proof of Theorem~\ref{group extension2:thm}, even in the case
$\Gamma=\Zb^d$, is completely different from that of Thomas and Lind et al.

The rest of this section is devoted to the proofs of Theorems~\ref{group extension1:thm} and \ref{group extension2:thm}.
Fix a (left) F{\o}lner sequence $\{F_n\}_{n\in \Nb}$  of $\Gamma$.

A systematic study of various fibre and conditional entropies was carried out in \cite{DS} for
dynamical systems of continuous maps on compact Hausdorff spaces. It will be interesting to see
to what extent the results in \cite{DS} generalize to actions of discrete amenable groups. Here we confine ourselves
to extend a few definitions and results in \cite{DS} to $\Gamma$-actions, needed for the proofs of
Theorems~\ref{group extension1:thm} and \ref{group extension2:thm}.

Let $\alpha_X$ be an action of $\Gamma$ on
a compact metrizable space $X$ by homeomorphisms.
Denote by $M_{\Gamma}(X)$ the set of all $\Gamma$-invariant Borel probability measures on $X$.
For any finite open cover $\cU$ of $X$ and any subset $Z\subseteq
X$, denote by $N(\cU|Z)$ the  minimal number of elements in $\cU$
needed to cover $Z$. Set $\cU^F:=\bigvee_{\gamma \in F} \gamma^{-1} \cU$
for a nonempty finite subset $F$ of $\Gamma$,
and
$$\htopol(\alpha_X, \cU| Z):=\limsup_{n\to \infty} \frac{1}{|F_n|}\log
N(\cU^{F_n}|Z).$$

Let $\alpha_Y$ be an action of $\Gamma$ on
another compact metrizable space $Y$ by homeomorphisms.
Consider a factor map $\pi: X\rightarrow Y$.
Given a  finite open
cover $\cU$ of $X$, note that the function $y\mapsto
N(\cU|\pi^{-1}(y))$ for $y\in Y$ is upper semicontinuous and hence
is a Borel function.
Let $\nu\in \M_{\Gamma}(Y)$.
Set
$$\rH(\cU|\nu):=\int_Y\log
N(\cU|\pi^{-1}(y)) \, d\nu(y).$$
It is easy to verify that the
function $F\mapsto \rH(\cU^F|\nu)$ defined on the set of  nonempty finite subsets
of $\Gamma$ satisfies the hypothesis in Proposition~\ref{subadditive:prop}
 and
hence $\lim_{n\to \infty}\frac{1}{|F_n|}\rH(\cU^{F_n}|\nu)$
exists and does not depend on the choice of the F{\o}lner sequence $\{F_n\}_{n\in \Nb}$.

\begin{definition} \label{fibre:def}
Let $\cU$ be a finite open cover of $X$. For $y\in Y$,
we define {\it the topological fibre entropy of
$\cU$ given $y$} as $\htopol(\alpha_X, \cU|\pi^{-1}(y))$
and denote
it by $\htopol(\alpha_X, \cU|y)$. For any $\nu \in M_{\Gamma}(Y)$,
we define the {\it topological fibre entropy of $\cU$ given $\nu$} as
$\lim_{n\to \infty}\frac{1}{|F_n|}\rH(\cU^{F_n}|\nu)$
and denote it by $\htopol(\alpha, \cU|\nu)$.
We define the {\it topological fibre entropy of $\alpha_X$ given
$y$}, and {\it given $\nu$}, respectively, as $\sup_{\cU}\htopol(\alpha_X, \cU|y)$
and $\sup_{\cU}\htopol(\alpha_X, \cU|\nu)$ respectively for the supremum being
taken over all finite open covers of $X$, and denote them by
$\htopol(\alpha_X|y)$ and $\htopol(\alpha_X|\nu)$ respectively.
\end{definition}

The following result is the analogue of part of \cite[Theorem 3]{DS}.

\begin{lemma} \label{outer CVP:lemma}
Let $\alpha_X$ and $\alpha_Y$ be actions of $\Gamma$ on compact metrizable spaces $X$ and $Y$ respectively.
Let $\pi: X\rightarrow Y$ be a factor map.
Then we have
$$ \sup_{y\in Y}\htopol(\alpha_X|y) \ge \sup_{\nu\in M_{\Gamma}(Y)} \htopol(\alpha_X|\nu).$$
\end{lemma}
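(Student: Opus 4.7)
The plan is to unfold the definition of $\htopol(\alpha_X|\nu)$ and apply a reverse Fatou argument, then remove the dependence on the cover and on $\nu$ by a supremum exchange at the end. Fix $\nu \in M_\Gamma(Y)$ and a finite open cover $\cU$ of $X$. For each $n$ put
$$g_n(y) := \frac{1}{|F_n|}\log N(\cU^{F_n}\mid \pi^{-1}(y)).$$
Each $g_n$ is upper semicontinuous (the same argument that showed $y \mapsto N(\cU\mid\pi^{-1}(y))$ is upper semicontinuous applies to $\cU^{F_n}$), hence Borel; it is also uniformly bounded by $\log |\cU^{F_n}| / |F_n| \le \log |\cU|$. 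By definition $\int_Y g_n\, d\nu = \frac{1}{|F_n|} \rH(\cU^{F_n}\mid \nu)$, and this converges to $\htopol(\alpha_X, \cU\mid \nu)$ by the Ornstein--Weiss lemma (Proposition~\ref{subadditive:prop}) applied to $F \mapsto \rH(\cU^F\mid \nu)$.

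Because the sequence $(g_n)$ is dominated by the constant $\log |\cU|$, reverse Fatou's lemma gives
$$\int_Y \limsup_n g_n(y)\, d\nu(y) \;\ge\; \limsup_n \int_Y g_n(y)\, d\nu(y) \;=\; \htopol(\alpha_X, \cU\mid \nu).$$
The integrand on the left equals $\htopol(\alpha_X, \cU\mid y)$ by definition. Hence for every $\varepsilon > 0$ the set $\{y \in Y : \htopol(\alpha_X, \cU\mid y) > \htopol(\alpha_X, \cU\mid \nu) - \varepsilon\}$ has positive $\nu$-measure (otherwise the integral would be strictly smaller), and so is nonempty. Consequently
$$\sup_{y\in Y} \htopol(\alpha_X\mid y) \;\ge\; \sup_{y\in Y} \htopol(\alpha_X, \cU\mid y) \;\ge\; \htopol(\alpha_X, \cU\mid \nu) - \varepsilon.$$

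Letting $\varepsilon \to 0$, then taking the supremum over all finite open covers $\cU$ of $X$ on the right, yields $\sup_{y\in Y} \htopol(\alpha_X\mid y) \ge \htopol(\alpha_X \mid \nu)$. Taking the supremum over $\nu \in M_\Gamma(Y)$ finishes the proof. The only real work is the reverse Fatou step, and the two ingredients that make it legal --- Borel measurability of $g_n$ (from upper semicontinuity) and uniform domination by $\log |\cU|$ --- are immediate. Note that we never have to find a single $y$ that simultaneously witnesses the fibre entropy for all covers; pushing the supremum over $\cU$ outside the integral at the end of the argument is what avoids that obstacle.
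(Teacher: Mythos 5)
Your proof is correct and is essentially the paper's argument: the paper's displayed computation is exactly the reverse Fatou step (written out via monotone convergence applied to $\sup_{m\ge n}\frac{1}{|F_m|}\log N(\cU^{F_m}|\pi^{-1}(y))$, using the same Borel measurability and uniform bound by $\log N(\cU)$), followed by the same reduction to a fixed cover $\cU$ and fixed $\nu$. Your detour through a positive-measure set is an equivalent way of saying that the supremum over $y$ dominates the $\nu$-integral, so nothing of substance differs.
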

\begin{proof}
It suffices to prove $\sup_{y\in Y}\htopol(\alpha_X, \cU|y)\ge
\htopol(\alpha_X, \cU|\nu)$ for every finite open cover $\cU$ of $X$ and every $\nu \in M_{\Gamma}(Y)$. Since the
function $y\mapsto N(\cU^F|\pi^{-1}(y))$ is a Borel function on
$Y$ for any nonempty finite subset $F$ of $\Gamma$, the function
$y\mapsto \htopol(\alpha_X, \cU|y)$ is also Borel. Note that
\begin{align} \label{E-uniform bound}
\frac{1}{|F|}\log N(\cU^{F}|\pi^{-1}(y))\le \frac{1}{|F|}\log N(\cU^{F})\le \log N(\cU)
\end{align}
for any nonempty finite subset $F$ of $\Gamma$ and $y \in Y$.
Thus
\begin{eqnarray*}
 \sup_{y\in Y}\htopol(\alpha_X, \cU|y)&\ge &\int_Y \htopol(\alpha_X, \cU|y)\, d\nu(y) \\
&=& \int_Y \lim_{n\to \infty}\sup_{m\ge n}\frac{1}{|F_m|}\log N(\cU^{F_m}|\pi^{-1}(y))\, d\nu(y) \\
&=& \lim_{n\to \infty}
\int_Y \sup_{m\ge n}\frac{1}{|F_m|}\log N(\cU^{F_m}|\pi^{-1}(y))\, d\nu(y) \\
&\ge &\lim_{n\to \infty} \sup_{m\ge n}
\frac{1}{|F_m|}\int_Y \log N(\cU^{F_m}|\pi^{-1}(y))\, d\nu(y) \\
&=&\lim_{n\to \infty} \sup_{m\ge n} \frac{1}{|F_m|} \rH(\cU^{F_m}|\nu) \\
&=&\htopol(\alpha_X, \cU|\nu),
\end{eqnarray*}
where the third lines comes from Lebesgue's monotone convergence theorem \cite[Theorem 1.26]{Rudin} and the uniform upper bound in \eqref{E-uniform bound}.
\end{proof}

The factor map $\pi: X\rightarrow Y$ induces a surjective continuous affine map from the space $M(X)$ of Borel probability measures
on $X$ to $M(Y)$. For any $\nu \in M_{\Gamma}(Y)$, take $\mu'\in M(X)$ with $\pi(\mu')=\nu$ and let $\mu$ be a limit  point of
the sequence $\{\frac{1}{|F_n|}\sum_{\gamma \in F_n}\gamma \mu'\}_{n\in \Nb}$ in the compact space $M(X)$. Then
$\mu$ is in $M_{\Gamma}(X)$ and $\pi(\mu)=\nu$. Thus
\begin{eqnarray} \label{E-invaraint measure surjective}
\pi(M_{\Gamma}(X))=M_{\Gamma}(Y).
\end{eqnarray}
Note that $\pi^{-1}(\cB_Y)$ is a $\Gamma$-invariant sub-$\sigma$-algebra of $\cB_X$.
We shall identify $\cB_Y$ with $\pi^{-1}(\cB_Y)$, and write $\rH_{\mu}(\cdot|\pi^{-1}(\cB_Y))$ and
$\rh_{\mu}(\cdot |\pi^{-1}(\cB_Y))$ simply
as $\rH_{\mu}(\cdot|\cB_Y)$ and $\rh_{\mu}(\cdot|\cB_Y)$ respectively.

The following result is the analogue of part of \cite[Theorem 4]{DS}.

\begin{lemma} \label{inner VP:lemma}
Let the assumptions be as in Lemma~\ref{outer CVP:lemma}.
For any $\nu\in M_{\Gamma}(Y)$, we have
$$ \htopol(\alpha_X|\nu)\ge \sup_{\mu\in M_{\Gamma}(X), \pi\mu=\nu}\rh_{\mu}(\alpha_X|\cB_Y).$$
\end{lemma}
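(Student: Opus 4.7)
The plan is to prove Lemma~\ref{inner VP:lemma} by a conditional Goodwyn-style argument. It suffices to show that for every $\mu \in M_\Gamma(X)$ with $\pi\mu = \nu$ and every finite Borel partition $\cP = \{P_1,\ldots,P_k\}$ of $X$ one has $\rh_\mu(\alpha_X,\cP|\cB_Y) \le \htopol(\alpha_X|\nu)$; taking suprema over $\cP$ and over such $\mu$ then yields the lemma.

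Fix $\varepsilon>0$. Using regularity of Borel measures on a compact metrizable space, I choose compact sets $K_i\subseteq P_i$ with $\mu(P_i\setminus K_i)$ so small that $K_0:=X\setminus\bigsqcup_{i=1}^k K_i$ has $\mu(K_0)$ arbitrarily small. Set $\cQ:=\{K_0,K_1,\ldots,K_k\}$ and $U_i:=X\setminus\bigcup_{1\le j\le k,\,j\ne i}K_j$, so that $\cU:=\{U_1,\ldots,U_k\}$ is a finite open cover of $X$. The geometric key is that $K_i\subseteq U_i$ with $K_j\cap U_i=\emptyset$ for $1\le j\ne i\le k$, while $K_0\subseteq U_i$ for every $i$. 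Consequently, for any $U=\bigcap_{\gamma\in F}\gamma^{-1}U_{s(\gamma)}\in\cU^F$ and any $x\in U$, the index $t(\gamma)$ of the $\cQ$-atom containing $\gamma x$ necessarily lies in $\{0,s(\gamma)\}$ for each $\gamma\in F$.

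The chain rule combined with $\rH_\mu(\cP|\cQ)\le\mu(K_0)\log k$ (each $K_i$, $i\ge 1$, sits in a single $P_i$) gives, for every nonempty finite $F\subseteq\Gamma$,
\[
\rH_\mu(\cP^F|\cB_Y)\le \rH_\mu(\cQ^F|\cB_Y)+|F|\,\mu(K_0)\log k.
\]
Disintegrating $\mu=\int_Y\mu_y\,d\nu(y)$ yields $\rH_\mu(\cQ^F|\cB_Y)=\int_Y\rH_{\mu_y}(\cQ^F)\,d\nu(y)$, so I must estimate the integrand pointwise. The naive count of $\cQ^F$-atoms meeting a given $U\in\cU^F$ is $2^{|F|}$, which leaves a spurious $\log 2$; to sharpen it I pass through the binary partition $\cR:=\{K_0,X\setminus K_0\}$ and use
\[
\rH_{\mu_y}(\cQ^F)=\rH_{\mu_y}(\cR^F)+\rH_{\mu_y}(\cQ^F|\cR^F).
\]
Within any atom of $\cR^F$ the pattern $A=\{\gamma\in F:\gamma x\in K_0\}$ is fixed, so the geometric observation forces $t(\gamma)=s(\gamma)$ for $\gamma\notin A$; hence at most one $\cQ^F$-atom inside that $\cR^F$-atom meets each $U\in\cU^F$, giving $\rH_{\mu_y}(\cQ^F|\cR^F)\le\log N(\cU^F|\pi^{-1}(y))$. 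Subadditivity of entropy and $\Gamma$-invariance of $\mu$ also yield
\[
\int_Y\rH_{\mu_y}(\cR^F)\,d\nu(y)\le \rH_\mu(\cR^F)\le |F|\bigl(-\mu(K_0)\log\mu(K_0)-(1-\mu(K_0))\log(1-\mu(K_0))\bigr).
\]

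Assembling these estimates, dividing by $|F_n|$, and letting $n\to\infty$ along the F{\o}lner sequence yields
\[
\rh_\mu(\alpha_X,\cP|\cB_Y)\le \htopol(\alpha_X,\cU|\nu)+\delta(\mu(K_0),k),
\]
where $\delta(t,k):=t\log k-t\log t-(1-t)\log(1-t)\to 0$ as $t\to 0$. Since $\htopol(\alpha_X,\cU|\nu)\le\htopol(\alpha_X|\nu)$ and $\varepsilon$ is arbitrary, the required inequality follows. The principal obstacle is precisely avoiding the spurious $|F|\log 2$ from the naive count of $\cQ^F$-atoms inside each $U\in\cU^F$; the auxiliary binary partition $\cR$ trades it for the vanishing binary-entropy term, after which the whole argument is a direct amenable-group adaptation of the classical Goodwyn/Misiurewicz proof.
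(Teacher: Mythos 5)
Your argument is correct, but it follows a genuinely different route from the paper's. You use inner regularity (compact $K_i\subseteq P_i$), the classical Goodwyn--Misiurewicz cover $U_i=X\setminus\bigcup_{j\neq i}K_j$, and then control everything fibrewise by disintegrating $\mu=\int_Y\mu_y\,d\nu(y)$ over $\pi$; the auxiliary binary partition $\cR=\{K_0,X\setminus K_0\}$ is exactly the right device to convert the naive bound ``each $U\in\cU^F$ meets at most $2^{|F|}$ atoms of $\cQ^F$'' into ``at most one atom per $\cR^F$-cell,'' so the error is the vanishing term $\mu(K_0)\log k+H(\mu(K_0))$ rather than a spurious $\log 2$. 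The paper instead uses outer regularity (open $U_i\supseteq P_i$), works with conditional expectations rather than disintegrations, builds a Borel partition $\cQ$ subordinate to $\cU^F$ and measurable over a partition $\beta$ of $Y$ recording which subfamilies of $\cU^F$ cover each fibre (giving $\rH_\mu(\cQ|\hat\beta)\le \rH(\cU^F|\nu)$), and controls the remaining term via the quantity $R_\mu(\cU)$ for partitions adapted to covers, invoking the subadditivity $R_\mu(\cU^F)\le|F|R_\mu(\cU)$ and a continuity estimate from Moulin Ollagnier's book to make $R_\mu(\cU)$ small. What your approach buys is a more elementary and self-contained proof, avoiding the adapted-partition machinery and the cited auxiliary results, at the cost of invoking measure disintegration and the fact that $\mu_y$ is carried by $\pi^{-1}(y)$ --- both legitimate here since $X$ and $Y$ are compact metrizable, though you should say a word about why $\int_Y \rH_{\mu_y}(\cdot)\,d\nu$ agrees with the paper's definition of $\rH_\mu(\cdot|\cB_Y)$ via $\Eb(1_Q|\cB_Y)(x)=\mu_{\pi(x)}(Q)$ a.e.; the paper's route avoids disintegration altogether.
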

\begin{proof} We combine the ideas in the proofs of
\cite[Theorem 4]{DS} and \cite[Theorem 5.2.8]{JMO}.
Let $\mu\in M_{\Gamma}(X)$
with $\pi (\mu)=\nu$.
Let
$\cP=\{P_1, \dots, P_k\}$ be a finite Borel partition of $X$ and
let $\varepsilon>0$. It suffices to show that there exists a finite
open cover $\cU$ of $X$ such that $\rh_{\mu}(\alpha_X, \cP|\cB_Y)\le \htopol(\alpha_X, \cU|\nu)+\varepsilon$.

We may assume that $\min_{1\le i\le k}\mu(P_i)>0$. Let $\delta$ be a small positive
constant which we shall determine later. Since
$\mu$ is regular \cite[Theorem 17.11]{K1}, we may find an open set $U_i\supseteq P_i$ for each $1\le i\le k$
such that $\mu(U_i\setminus P_i)<\delta$. Then $\cU=\{U_1, \dots, U_k\}$ is an open
cover of $X$.

Let $F$ be a nonempty finite subset of $\Gamma$. Define an
equivalence relation $\sim$ on $Y$ as $y\sim y'$ whenever
$\pi^{-1}(y)$ and $\pi^{-1}(y')$ are covered by exactly the same
subfamilies of $\cU^F$. Denote by $\beta$ the finite partition of
$Y$ into the equivalence classes. It is readily verified that each
item of $\beta$ is the intersection of a closed set and an open
set, and hence is Borel. For each $D\in \beta$ we can find some
$\cV_D\subseteq \cU^F$ such that $\cV_D$ covers $\pi^{-1}(D)$ and
$|\cV_D|=N(\cU^F|\pi^{-1}(y))$ for every $y\in D$. It is easy to
construct a Borel partition $\cQ_D=\{Q_{D, R}: R\in \cV_D\}$ of
$\pi^{-1}(D)$ with $Q_{D, R}\subseteq R$ for each $R\in \cV_D$.
Set $Q_R:=\bigcup_{D\in \beta}Q_{D, R}$ for $R\in \bigcup_{D\in
\beta}\cV_D$. Then $\cQ:=\{Q_R: R\in \bigcup_{D\in \beta}\cV_D\}$ is
a Borel partition of $X$.
For any finite Borel partition $\cP'$ of $X$,
denote by $\widehat{\cP'}$ the $\sigma$-algebra generated by the items of $\cP'$.
Note that for any $m$-item Borel partition
$\cP'$ of $X$, one has $\rH_{\mu}(\cP')\le \log m$ \cite[page 80]{Wal}.
Thus
\begin{eqnarray} \label{inner1:eq}
\rH_{\mu}(\cQ|\hat{\beta})\le \sum_{D\in \beta}\nu(D)\log |\cV_D|=
\int_Y \log N(\cU^F|\pi^{-1}(y))\, d\nu(y)=\rH(\cU^F|\nu).
\end{eqnarray}

We say that a finite partition $\cP'$ of $X$ is {\it adapted} to a
finite open cover $\cU'$ of $X$ if there is an injective (not necessarily surjective) map $\psi$ from
$\cP'$ to $\cU'$ such that each $P\in \cP'$ is contained in
$\psi(P)$. Denote by $R_{\mu}(\cU')$ the supremum of
$\rH_{\mu}(\cP'|\widehat{\cQ'})$ for all Borel partitions $\cP'$ and $\cQ'$
of $X$
adapted to $\cU'$.
By \cite[Prop. 5.2.11]{JMO} one has
$R_{\mu}(\cU'\vee \cV')\le R_{\mu}(\cU')+R_{\mu}(\cV')$ for all
finite open covers $\cU'$ and $\cV'$ of $X$. Note that both
$\cP^F$ and $\cQ$ are adapted to $\cU^F$ and hence
\begin{eqnarray} \label{inner2:eq}
\rH_{\mu}(\cP^F|\hat{\cQ})\le R_{\mu}(\cU^F)\le |F|R_{\mu}(\cU).
\end{eqnarray}

For two sub-$\sigma$-algebras $\cB_1$ and $\cB_2$ of $\cB_X$,
denote  by $\cB_1\vee \cB_2$ the sub-$\sigma$-algebra of $\cB_X$ generated by $\cB_1$
and $\cB_2$.
We have
\begin{eqnarray*}
\rH_{\mu}(\cP^F|\cB_Y)&\le & \rH_{\mu}(\cP^F\vee \cQ|\cB_Y)\\
&=&  \rH_{\mu}(\cQ|\cB_Y)+\rH_{\mu}(\cP^F|\hat{\cQ}\vee \cB_Y) \\
&\le &  \rH_{\mu}(\cQ|\hat{\beta})+\rH_{\mu}(\cP^F|\hat{\cQ}) \\
&\overset{(\ref{inner1:eq}), (\ref{inner2:eq})}\le & \rH(\cU^F|\nu)+|F|R_{\mu}(\cU).
\end{eqnarray*}
Divide both sides of the above inequality by $|F|$, replace $F$ by $F_n$ and take
limits. We obtain $\rh_{\mu}(\alpha_X, \cP|\cB_Y)\le \htopol(\alpha_X, \cU|\nu)+R_{\mu}(\cU)$. It remains
to show that $R_{\mu}(\cU)\le \varepsilon$ when $\delta$ is small enough.

We may assume that $\delta<\frac{1}{k}\min_{1\le i\le k}\mu(P_i)$. Then the sum of the $\mu$-measures of the
elements in any proper subset of $\cU$ is strictly less than $1$. It follows that
every Borel partition of $X$ adapted to $\cU$ has exactly $k$ items.
Let $\cP'=\{P'_1, \dots, P'_k\}$ and
$\cQ'=\{Q'_1, \dots, Q'_k\}$ be Borel partitions of $X$ adapted to $\cU$ with
$P'_i, Q'_i\subseteq U_i$ for each $1\le i\le k$.
By \cite[Lemma 4.3.9]{JMO} one has
$\rH_{\mu}(\cP'|\widehat{\cQ'})\le 2k^2\xi(2d(\cP', \cQ')/k^2)$, where
$d(\cP', \cQ'):=\frac{1}{2}\sum_{1\le i\le k}\mu(P'_i\bigtriangleup Q'_i)$ and
$\xi(t):=\max_{0\le s\le t} (-s\log s)$ for $0\le t\le 1$.
Note that
\begin{eqnarray*}
\sum_{1\le i\le k}\mu(P'_i\setminus Q'_i) &\le &\sum_{1\le i\le k}\mu(U_i\setminus Q'_i)=\sum_{1\le i\le k}(\mu(U_i)-\mu(Q'_i))\\
&=& \sum_{1\le i\le k}\mu(U_i)-1=\sum_{1\le i\le k}(\mu(U_i)-\mu(P_i))\\
&=&\sum_{1\le i\le k}\mu(U_i\setminus P_i)<k\delta.
\end{eqnarray*}
Similarly, $\sum_{1\le i\le k}\mu(Q'_i\setminus P'_i) <k\delta$. It follows that $d(\cP', \cQ')<k\delta$.
Thus $R_{\mu}(\cU)\le 2k^2\xi(2\delta/k)$. Therefore it suffices
to require further $\xi(2\delta/k)\le \varepsilon/(2k^2)$.
\end{proof}


The case $\Gamma=\Zb$ of the next theorem was proved by R. Bowen \cite[Theorem 17]{Bow}.
Our proof for the general case takes the approach in \cite{DS}.

\begin{theorem} \label{fibre:thm}
Let the assumptions be as in Lemma~\ref{outer CVP:lemma}.
We have
$$ \htopol(\alpha_X)\le \htopol(\alpha_Y)+\sup_{y\in Y}\htopol(\alpha_X|y).$$
\end{theorem}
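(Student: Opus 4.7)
My plan is to route the proof through measure theory rather than attempt a direct Bowen-style combinatorial argument. The natural ingredients are the variational principle for continuous actions of countable amenable groups, the Ward--Zhang addition formula for $\Gamma$-extensions \cite[Theorem~4.4]{WZ}, and the already-established Lemmas~\ref{outer CVP:lemma} and \ref{inner VP:lemma} together with the surjectivity \eqref{E-invaraint measure surjective}. The two lemmas were set up precisely so that the topological fibre entropy dominates the measure-theoretic conditional entropy, so once the entropy on $X$ is split appropriately the inequality should fall out as a chain.

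The argument I would run is the following. First, by the variational principle,
$$\htopol(\alpha_X)=\sup_{\mu\in M_\Gamma(X)}\rh_\mu(\alpha_X).$$
Second, for each $\mu\in M_\Gamma(X)$ the Ward--Zhang addition formula gives
$$\rh_\mu(\alpha_X)=\rh_{\pi\mu}(\alpha_Y)+\rh_\mu(\alpha_X|\cB_Y),$$
and taking suprema,
$$\htopol(\alpha_X)\le\sup_{\mu}\rh_{\pi\mu}(\alpha_Y)+\sup_{\mu}\rh_\mu(\alpha_X|\cB_Y).$$
Third, because $\pi(M_\Gamma(X))=M_\Gamma(Y)$ by \eqref{E-invaraint measure surjective}, the first supremum equals $\sup_{\nu\in M_\Gamma(Y)}\rh_\nu(\alpha_Y)=\htopol(\alpha_Y)$ by a second application of the variational principle (now on $Y$). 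Fourth, I rewrite
$$\sup_\mu\rh_\mu(\alpha_X|\cB_Y)=\sup_{\nu\in M_\Gamma(Y)}\sup_{\mu:\,\pi\mu=\nu}\rh_\mu(\alpha_X|\cB_Y)$$
and apply Lemma~\ref{inner VP:lemma} fibrewise to bound this by $\sup_\nu\htopol(\alpha_X|\nu)$, and then Lemma~\ref{outer CVP:lemma} to bound the latter by $\sup_{y\in Y}\htopol(\alpha_X|y)$. Assembling the four steps yields the theorem.

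The main obstacle in this plan is external rather than internal to the deduction itself, which is essentially bookkeeping. The heavy lifting is done by two substantial cited results: the variational principle for countable amenable group actions, and the Ward--Zhang formula (which in turn rests on the Rudolph--Weiss orbit-equivalence method). If one instead insisted on a self-contained Bowen-style topological proof, the serious difficulty would be a synchronization issue: upper semicontinuity of $y\mapsto N(\cU^{F}|\pi^{-1}(y))$ does supply, for each $y\in Y$ and each $\varepsilon>0$, a F{\o}lner index $n(y)$ and an open neighborhood of $y$ on which $\frac{1}{|F_{n(y)}|}\log N(\cU^{F_{n(y)}}|\pi^{-1}(y'))\le\sup_{y}\htopol(\alpha_X,\cU|y)+\varepsilon$, but welding finitely many such local estimates (extracted by a compactness-based finite subcover of $Y$) into a single uniform global bound for $N(\cU^{F_m})$ would require a quasitiling of $F_m$ by translates of the $F_{n(y_i)}$'s in place of $\mathbb{Z}$'s interval decomposition.
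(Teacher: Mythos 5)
Your proposal is essentially the paper's own proof: the paper deduces the theorem in exactly this way, combining the variational principle, the addition formula $\rh_{\mu}(\alpha_X)=\rh_{\pi \mu}(\alpha_Y)+\rh_{\mu}(\alpha_X|\cB_Y)$ from \cite{Dan} (the Ward--Zhang formula), Lemmas~\ref{outer CVP:lemma} and \ref{inner VP:lemma}, and the surjectivity \eqref{E-invaraint measure surjective}. The only detail the paper adds is a short separate verification that the addition formula also holds when $\Gamma$ is finite, since the cited result is stated for infinite groups.
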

\begin{proof} By Theorem 0.2 of \cite{Dan}, when $\Gamma$ is infinite, we have
\begin{eqnarray} \label{ZW addition:eq}
\rh_{\mu}(\alpha_X)=\rh_{\pi \mu}(\alpha_Y)+\rh_{\mu}(\alpha_X|\cB_Y)
\end{eqnarray}
for every $\mu \in M_\Gamma(X)$. If $\Gamma$ is finite and $\alpha_Z$ is an action of $\Gamma$ on a standard probability space $(Z, \cB_Z, \mu_Z)$ by automorphisms and $\cD$ is a $\Gamma$-invariant sub-$\sigma$-algebra of $\cB_Z$,
then clearly $\rh_{\mu_Z}(\alpha_Z|\cD)=\frac{H(\mu_Z|\cD)}{|\Gamma|}$,  where $H(\mu_Z|\cD)$ denotes the supremum of $H_{\mu_Z}(\cP|\cD)$ for $\cP$ running over
all finite measurable partitions of $Z$. If $\mu_Z$ is purely atomic in the sense that $\sum_{z\in Z}\mu_Z(\{z\})=1$, then
$H(\mu_Z|\{\emptyset, Z\})=\sum_{z\in Z}-\mu_Z(\{z\})\log \mu_Z(\{z\})$. If $\mu_Z$ is not purely atomic, then there is some
$Z'\in \cB_Z$ with $\mu_Z(Z')>0$ such that $Z'$ equipped with the restriction of $\cB_Z$ and $\mu_Z$ is isomorphic to the interval $[0, \mu_Z(Z')]$ equipped with the Borel structure of
its canonical topology and the Lebesgue measure \cite[Theorem 17.41]{K1}, and hence $H(\mu_Z|\{\emptyset, Z\})=\infty$.
It follows easily that the formula (\ref{ZW addition:eq}) holds also when $\Gamma$ is finite.

By the variational principle \cite[page 76]{JMO} we have
$\htopol(\alpha_X)=\sup_{\mu\in M_{\Gamma}(X)} \rh_{\mu}(\alpha_X)$ and
$\htopol(\alpha_Y)=\sup_{\nu\in M_{\Gamma}(Y)} \rh_{\nu}(\alpha_Y)$.
Thus Theorem~\ref{fibre:thm} follows from Lemmas~\ref{outer CVP:lemma}
and \ref{inner VP:lemma}, and (\ref{E-invaraint measure surjective}).
\end{proof}

Fix a compatible matric $d$ on $X$. For any $\varepsilon>0$ and
any nonempty finite subset $F\subseteq \Gamma$, we say that a set
$E\subseteq X$ is {\it $(F, \varepsilon)$-separated} if for any
$x\neq y$ in $E$ there is some $\gamma \in F$ with $d(\gamma
x, \gamma y)> \varepsilon$ and we say that a set $E'\subseteq
X$ {\it $(F, \varepsilon)$-spans} another subset $Z\subseteq X$ if
for any $x\in Z$ there is some $y\in E'$ with $d(\gamma x,
\gamma y)\le \varepsilon$ for all $\gamma \in F$. For any
$Z\subseteq X$, denote by $r_F(\varepsilon, Z)$ the smallest
cardinality of any set $E$ which $(F, \varepsilon)$-spans $Z$ and
denote by $s_F(\varepsilon, Z)$ the largest cardinality of any
$(F, \varepsilon)$-separated set $E$ contained in $Z$.

It is routine to prove the following lemma (cf. \cite[Lemma 1]{Bow}
\cite[Prop 2.1]{Den}).

\begin{lemma} \label{Bowen:lemma}
Let $\alpha_X$ be an action of $\Gamma$ on a compact metrizable space $X$ by homeomorphisms.
For any $Z\subseteq X$, we have
$$ \sup_{\cU}\htopol(\alpha_X, \cU|Z)=\lim_{\varepsilon \to 0}\limsup_{n\to \infty}
\frac{1}{|F_n|}\log r_{F_n}(\varepsilon, Z)=
\lim_{\varepsilon \to 0}\limsup_{n\to \infty}
\frac{1}{|F_n|}\log s_{F_n}(\varepsilon, Z),$$
where the supremum is taken  over all finite open covers of $X$.
\end{lemma}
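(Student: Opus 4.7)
The plan is to mimic R. Bowen's classical argument comparing open cover, spanning, and separated definitions of entropy, adapted to F\o lner sequences for amenable $\Gamma$. The only ingredients needed are (i) the elementary comparisons $r_F(\varepsilon, Z) \le s_F(\varepsilon, Z) \le r_F(\varepsilon/2, Z)$, which hold for every nonempty finite $F \subseteq \Gamma$ by the standard maximality argument, (ii) the existence of Lebesgue numbers for finite open covers of the compact metrizable space $X$, and (iii) the definition of $\htopol(\alpha_X, \cU|Z)$. Once these are in hand, the two limits over spanning and separated sets agree, so it remains to sandwich $\sup_\cU \htopol(\alpha_X, \cU|Z)$ between them.

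For the inequality $\sup_\cU \htopol(\alpha_X, \cU|Z) \le \lim_{\varepsilon\to 0}\limsup_n \frac{1}{|F_n|}\log r_{F_n}(\varepsilon, Z)$, I would fix a finite open cover $\cU$ of $X$ and pick a Lebesgue number $\varepsilon > 0$ so that every $d$-ball of radius $\varepsilon$ in $X$ lies inside some element of $\cU$. Given any $(F_n, \varepsilon)$-spanning set $E$ of $Z$, for each $y \in E$ and each $\gamma \in F_n$ the ball of radius $\varepsilon$ around $\gamma y$ is contained in some $U_{y,\gamma} \in \cU$, so the set $\{x\in X : d(\gamma x, \gamma y)\le \varepsilon\ \forall \gamma \in F_n\}$ is contained in the single element $\bigcap_{\gamma\in F_n}\gamma^{-1}U_{y,\gamma}$ of $\cU^{F_n}$. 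Hence $N(\cU^{F_n}|Z)\le r_{F_n}(\varepsilon, Z)$, giving $\htopol(\alpha_X,\cU|Z)\le \limsup_n |F_n|^{-1}\log r_{F_n}(\varepsilon, Z)$, and then the supremum over $\cU$ and limit as $\varepsilon\to 0$ yields the desired bound.

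For the reverse inequality $\lim_{\varepsilon\to 0}\limsup_n \frac{1}{|F_n|}\log s_{F_n}(\varepsilon, Z) \le \sup_\cU \htopol(\alpha_X, \cU|Z)$, I would fix $\varepsilon > 0$ and cover $X$ by finitely many open balls of radius $\varepsilon/2$, forming $\cU$. If $E \subseteq Z$ is $(F_n, \varepsilon)$-separated and two points $x \neq y$ of $E$ lay in the same element of $\cU^{F_n}$, then for every $\gamma \in F_n$ the points $\gamma x$ and $\gamma y$ would share an $\varepsilon/2$-ball, forcing $d(\gamma x, \gamma y) < \varepsilon$ and contradicting separation. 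Thus each element of $\cU^{F_n}$ covering $Z$ contains at most one point of $E$, whence $s_{F_n}(\varepsilon, Z) \le N(\cU^{F_n}|Z)$; dividing by $|F_n|$, taking $\limsup$, and then $\varepsilon\to 0$ completes the proof.

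No step is a genuine obstacle here; the argument is essentially R. Bowen's, with the F\o lner averaging replacing time averaging, and it does not interact with the amenability of $\Gamma$ beyond the fact that $\htopol(\alpha_X,\cU|Z)$ is defined via $\limsup_n \frac{1}{|F_n|}\log N(\cU^{F_n}|Z)$. The mildest care needed is to ensure that the choice of Lebesgue number in the first direction does not depend on $F_n$ (which is automatic, since $\cU^{F_n}$ is refined by taking $\bigcap_{\gamma\in F_n}\gamma^{-1}U_{y,\gamma}$ point by point), and that the two $\lim_{\varepsilon\to 0}$ agree, which follows from the monotonicity of $r_{F_n}(\varepsilon, Z)$ and $s_{F_n}(\varepsilon, Z)$ in $\varepsilon$ together with the sandwich $r_F(\varepsilon, Z)\le s_F(\varepsilon, Z)\le r_F(\varepsilon/2, Z)$.
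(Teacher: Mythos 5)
Your proof is correct and is exactly the routine Bowen-style argument the paper has in mind (the paper omits the proof, citing \cite[Lemma 1]{Bow} and \cite[Prop 2.1]{Den}): Lebesgue numbers give $N(\cU^{F_n}|Z)\le r_{F_n}(\varepsilon,Z)$, covers by $\varepsilon/2$-balls give $s_{F_n}(\varepsilon,Z)\le N(\cU^{F_n}|Z)$, and the sandwich $r_F(\varepsilon,Z)\le s_F(\varepsilon,Z)\le r_F(\varepsilon/2,Z)$ closes the loop. No issues.
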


We are ready to prove Theorem~\ref{group extension1:thm}.

\begin{proof}[Proof of Theorem~\ref{group extension1:thm}]
If $\Gamma$ is finite and $\alpha_Z$ is an action of $\Gamma$ on a compact Hausdorff space $Z$
by homeomorphisms, then clearly $\htopol(\alpha_Z)=\frac{\log |Z|}{|\Gamma|}$ when $Z$ is finite while
$\htopol(\alpha_Z)=\infty$ when $Z$ is infinite. It follows that Theorem~\ref{group extension1:thm} holds
when $\Gamma$ is finite. Thus we may assume that $\Gamma$ is infinite.
We follow the proof of
\cite[Theorem 19]{Bow}, but using Theorem~\ref{fibre:thm} and Lemma~\ref{Bowen:lemma}.

Fix compatible metrics $d_X$, $d_Y$ and $d_G$ for $X$, $Y$ and $G$ respectively.
To show $\htopol(\alpha_X)\le \htopol(\alpha_Y)+\htopol(\alpha_G)$, by Theorem~\ref{fibre:thm} it suffices to
show $\htopol(\alpha_X|y)\le \htopol(\alpha_G)$ for every $y\in Y$. Take $z\in \pi^{-1}(y)$. Given $\varepsilon>0$,
take $\delta>0$ such that $d_X(xg_1, xg_2)\le \varepsilon$ for any $x\in X$ and $g_1, g_2\in G$ with
$d_G(g_1, g_2)\le \delta$. Let $F$ be a nonempty finite subset of $\Gamma$. If a subset $E$ of $G$ $(F, \delta)$-spans
$G$, then $zE$ $(F, \varepsilon)$-spans $zG=\pi^{-1}(y)$. Thus $r_F(\varepsilon, \pi^{-1}(y))\le r_F(\delta, G)$.
By Lemma~\ref{Bowen:lemma} we get $\htopol(\alpha_X|y)\le \htopol(\alpha_G)$ as desired.

Next we show $\htopol(\alpha_X)\ge \htopol(\alpha_Y)+\htopol(\alpha_G)$. Given $\varepsilon>0$, since $X$ and $G$ are compact
and $xg=xg'$ only when $g=g'$, we can
find $\delta>0$ such that $d_X(x_1, x_2)>\delta$ for any $x_1, x_2\in X$ with
$d_Y(\pi(x_1), \pi(x_2))> \varepsilon$, and that $d_X(xg_1, xg_2)>\delta$ for any $x\in X$ and $g_1, g_2\in G$ with
$d_G(g_1, g_2)>\varepsilon$. Let $F$ be a nonempty finite subset of $\Gamma$. Let $E_Y$ and $E_G$ be subsets
of $Y$ and $G$ being $(F, \delta)$-separated respectively. Take $E_X\subseteq X$ such that the restriction
of $\pi$ on $E_X$ maps $E_X$ bijectively to $E_Y$. We claim that $|E_XE_G|=|E_X|\cdot |E_G|$ and that
$E_XE_G$ is $(F, \delta)$-separated.
If $x_1, x_2$ are distinct points in $E_X$ and $g_1, g_2\in E_G$, then $\pi(x_1), \pi(x_2)\in E_Y$ are distinct,
thus for some $\gamma\in F$
one has $d_Y(\pi(\gamma (x_1g_1)), \pi(\gamma (x_2g_2)))=
d_Y(\gamma \pi(x_1), \gamma \pi(x_2))> \varepsilon$ and hence $d_X(\gamma (x_1g_1), \gamma (x_2g_2))>\delta$.
If $g_1, g_2$ are distinct points in $E_G$ and $x\in E_X$,
then
for some $\gamma\in F$
one has $d_G(\gamma (g_1), \gamma (g_2))> \varepsilon$ and hence $d_X(\gamma(xg_1), \gamma (xg_2))=d_X(\gamma (x)\gamma (g_1), \gamma (x)\gamma (g_2))>\delta$. This proves the claim. Thus $s_F(\delta, X)\ge s_F(\varepsilon, Y)s_F(\varepsilon, G)$.
By Lemma~\ref{Bowen:lemma} we get $\htopol(\alpha_X)\ge \htopol(\alpha_Y)+\htopol(\alpha_G)$ as desired.
\end{proof}

Let $X$ be a $G$-extension of $Y$. In the second paragraph of the proof of Theorem~\ref{group extension1:thm},
we have proved that $\htopol(\alpha_X|y)\le \htopol(\alpha_G)$ for every $y\in Y$. The argument in the third paragraph of the proof
also shows that $\htopol(\alpha_X|y)\ge \htopol(\alpha_G)$ for every $y\in Y$. For later use, we record this as

\begin{lemma} \label{fibre:lemma}
Let the assumptions be as in Theorem~\ref{group extension1:thm}.
Then $\htopol(\alpha_X|y)=\htopol(\alpha_G)$ for every $y\in Y$.
\end{lemma}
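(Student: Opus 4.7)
The plan is to observe that both inequalities are essentially already contained in the proof of Theorem~\ref{group extension1:thm}, once one restricts attention to a single fibre $\pi^{-1}(y)$ rather than to all of $X$. I will fix compatible metrics $d_X$ on $X$ and $d_G$ on $G$, use Lemma~\ref{Bowen:lemma} to replace the supremum over finite open covers by the $(F, \varepsilon)$-spanning and $(F, \varepsilon)$-separating formulations, and then exploit the continuity of the action map $P: X \times G \to X$ and the injectivity of $g \mapsto xg$.

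For the inequality $\htopol(\alpha_X | y) \le \htopol(\alpha_G)$, I would fix $y \in Y$ and pick $z \in \pi^{-1}(y)$. Given $\varepsilon > 0$, by the uniform continuity of $P$ on the compact set $X \times G$ there is some $\delta > 0$ such that $d_G(g_1, g_2) \le \delta$ implies $d_X(xg_1, xg_2) \le \varepsilon$ for every $x \in X$. If $E \subseteq G$ is $(F, \delta)$-spanning in $G$ for a nonempty finite $F \subseteq \Gamma$, then $zE$ is $(F, \varepsilon)$-spanning in $zG = \pi^{-1}(y)$, so $r_F(\varepsilon, \pi^{-1}(y)) \le r_F(\delta, G)$. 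Taking limits along $\{F_n\}$ and applying Lemma~\ref{Bowen:lemma} to both sides yields the desired bound. This is exactly the argument already carried out in the second paragraph of the proof of Theorem~\ref{group extension1:thm}.

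For the reverse inequality $\htopol(\alpha_X | y) \ge \htopol(\alpha_G)$, I would again fix $z \in \pi^{-1}(y)$. Since $xg = xg'$ forces $g = g'$ and $X, G$ are compact, for any $\varepsilon > 0$ there exists $\delta > 0$ such that $d_G(g_1, g_2) > \varepsilon$ implies $d_X(xg_1, xg_2) > \delta$ uniformly in $x \in X$. Now if $E_G \subseteq G$ is $(F, \varepsilon)$-separated, then for distinct $g_1, g_2 \in E_G$ there is $\gamma \in F$ with $d_G(\gamma g_1, \gamma g_2) > \varepsilon$, hence $d_X(\gamma(zg_1), \gamma(zg_2)) = d_X(\gamma(z)\gamma(g_1), \gamma(z)\gamma(g_2)) > \delta$. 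Thus $zE_G \subseteq \pi^{-1}(y)$ is $(F, \delta)$-separated, giving $s_F(\delta, \pi^{-1}(y)) \ge s_F(\varepsilon, G)$. Again Lemma~\ref{Bowen:lemma} delivers the conclusion.

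There is no real obstacle here: the lemma is simply the single-fibre fragment of the two-sided estimate already developed in Theorem~\ref{group extension1:thm}, and the only point to be careful about is that the uniform continuity constants $\delta$ do not depend on the base point $z \in \pi^{-1}(y)$, which is guaranteed by compactness of $X$.
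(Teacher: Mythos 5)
Your argument is correct and is essentially identical to the paper's: the paper proves this lemma precisely by noting that the second paragraph of the proof of Theorem~\ref{group extension1:thm} gives $\htopol(\alpha_X|y)\le \htopol(\alpha_G)$ via spanning sets, and that the separation argument of the third paragraph, specialized to a single fibre $zG=\pi^{-1}(y)$ with $z$ fixed, gives the reverse inequality, both via Lemma~\ref{Bowen:lemma}. Your handling of the uniform constants via compactness matches the paper's.
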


Next we consider group extensions constructed out of continuous cocycles.

\begin{lemma} \label{conditional:lemma}
Let $\alpha_Y$ and $\alpha_G$ be actions of $\Gamma$ on
a compact metrizable space $Y$ and a compact metrizable group $G$ by homeomorphisms
and (continuous) automorphisms respectively. Let $\sigma: \Gamma \times Y\rightarrow G$ be a continuous cocycle, i.e.
a continuous map satisfying (\ref{cocycle:eq}). Consider the action $\alpha_Y\times_{\sigma}\alpha_G$
of $\Gamma$ on the compact metrizable space $Y\times G$ by homeomorphisms, defined
by (\ref{extension:eq}). For any $\mu\in M_{\Gamma}(Y)$, denoting by $\nu$ the normalized Haar measure of $G$, we have
$$ \rh_{\mu\times \nu}(\alpha_Y\times_{\sigma}\alpha_G|\cB_Y)= \rh(\alpha_G).$$
\end{lemma}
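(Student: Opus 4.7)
I would prove the equality $\rh_{\mu\times\nu}(\alpha_Y\times_\sigma\alpha_G\mid\cB_Y)=\rh(\alpha_G)$ by establishing the two inequalities separately, leveraging the topological fibre entropy machinery already developed.

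\emph{Upper bound.} First observe that $X:=Y\times G$, equipped with the action $\alpha_Y\times_\sigma\alpha_G$, is a topological (right) $G$-extension of $Y$ via the projection $\pi\colon X\to Y$ and the map $(y,g)\cdot g_0:=(y,gg_0)$: one verifies $\gamma((y,g)\cdot g_0)=(\gamma(y,g))\cdot\gamma(g_0)$. Lemma~\ref{fibre:lemma} then yields $\htopol(\alpha_Y\times_\sigma\alpha_G\mid y)=\htopol(\alpha_G)$ for every $y\in Y$, and this common value equals $\rh(\alpha_G)$ by Deninger's theorem (recalled in Subsection~\ref{SS-entropy}). Applying Lemma~\ref{inner VP:lemma} to the measure $\mu\times\nu\in M_\Gamma(X)$, whose $\pi$-pushforward is $\mu$, followed by Lemma~\ref{outer CVP:lemma}, gives
$$\rh_{\mu\times\nu}(\alpha_Y\times_\sigma\alpha_G\mid\cB_Y)\le\htopol(\alpha_Y\times_\sigma\alpha_G\mid\mu)\le\sup_{y\in Y}\htopol(\alpha_Y\times_\sigma\alpha_G\mid y)=\rh(\alpha_G).$$

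\emph{Lower bound.} By the Ward--Zhang--Danilenko formula \eqref{ZW addition:eq} applied to $\mu\times\nu$, the desired inequality is equivalent to $\rh_{\mu\times\nu}(\alpha_Y\times_\sigma\alpha_G)\ge\rh_\mu(\alpha_Y)+\rh(\alpha_G)$. For finite Borel partitions $\cP$ of $Y$ and $\cQ$ of $G$, I would form the joint partition $\widehat{\cP}\vee\widetilde{\cQ}$ of $X$, with $\widehat{\cP}=\pi^{-1}\cP$ and $\widetilde{\cQ}=\{Y\times Q:Q\in\cQ\}$, and aim to show that $\rh_{\mu\times\nu}(\alpha_Y\times_\sigma\alpha_G,\widehat{\cP}\vee\widetilde{\cQ})\ge\rh_\mu(\alpha_Y,\cP)+\rh_\nu(\alpha_G,\cQ)$; taking suprema over $\cP$ and $\cQ$ and using Deninger's coincidence $\htopol(\alpha_G)=\rh_\nu(\alpha_G)$ then yields the required bound. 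Upon disintegrating $\mu\times\nu$ over $\pi$, the fibrewise conditional entropy assumes the form
$$\rH_{\mu\times\nu}(\widetilde{\cQ}^{F_n}\mid\cB_Y)=\int_Y H_\nu\Bigl(\bigvee_{\gamma\in F_n}\phi_{\gamma,y}^{-1}\cQ\Bigr)\,d\mu(y),$$
where $\phi_{\gamma,y}(g):=\sigma(\gamma,y)\gamma(g)$ is an affine self-map of $G$ that preserves $\nu$ by the bi-invariance of Haar measure, and a parallel (easier) manipulation handles the $\widehat{\cP}^{F_n}$-contribution.

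\emph{Main obstacle.} The principal difficulty lies in controlling the cocycle twist in this fibrewise entropy. The maps $\phi_{\gamma,y}$ are not group automorphisms of $G$, so in general $\bigvee_{\gamma\in F_n}\phi_{\gamma,y}^{-1}\cQ$ does not agree with $\cQ^{F_n}$, the join under $\alpha_G$ alone, even though each $\phi_{\gamma,y}^{-1}\cQ$ is merely a translate of $\gamma^{-1}\cQ$. To circumvent this, I would exploit the joint continuity of $\sigma$: for any nonempty finite $F\subseteq\Gamma$ and $\varepsilon>0$, by uniform continuity on the compact set $Y$ one can find a finite Borel partition of $Y$ on whose atoms $y\mapsto\sigma(\gamma,y)$ is $\varepsilon$-close to a constant for every $\gamma\in F$ simultaneously. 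On each such atom the cocycle is effectively a coboundary, so the skew product is essentially conjugate to the direct product $\alpha_Y\times\alpha_G$, for which the additivity $\rh_{\mu\times\nu}(\alpha_Y\times\alpha_G,\widehat{\cP}\vee\widetilde{\cQ})=\rh_\mu(\alpha_Y,\cP)+\rh_\nu(\alpha_G,\cQ)$ is routine. An Ornstein--Weiss quasi-tiling argument would then patch these local estimates uniformly across the F{\o}lner set $F_n$, the hard part being to keep the accumulated error $o(|F_n|)$ as $F_n$ exhausts $\Gamma$.
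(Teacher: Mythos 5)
Your upper bound is exactly the paper's argument (the skew product is a topological $G$-extension, then Lemmas~\ref{outer CVP:lemma}, \ref{inner VP:lemma} and \ref{fibre:lemma}), and that half is fine. The lower bound, however, has a genuine gap, in two places. First, the reduction via the Ward--Zhang--Danilenko formula is not an equivalence: from \eqref{ZW addition:eq} the inequality $\rh_{\mu\times\nu}(\alpha_Y\times_\sigma\alpha_G)\ge\rh_\mu(\alpha_Y)+\rh(\alpha_G)$ only yields $\rh_{\mu\times\nu}(\alpha_Y\times_\sigma\alpha_G\mid\cB_Y)\ge\rh(\alpha_G)$ when $\rh_\mu(\alpha_Y)<\infty$, and the lemma is stated for arbitrary $\mu\in M_\Gamma(Y)$ on an arbitrary compact metrizable $Y$, where $\rh_\mu(\alpha_Y)=\infty$ is entirely possible; in that case the unconditional inequality is vacuous and gives no information about the conditional entropy. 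Second, even granting the reduction, the step you yourself flag as the main obstacle is not actually proved: making $\sigma(\gamma,\cdot)$ nearly constant on atoms of a partition of $Y$ does not make the skew product ``essentially conjugate'' to the direct product, because even for a literally constant cocycle the fibre maps $\phi_{\gamma,y}=L_{\sigma(\gamma,y)}\circ\gamma$ differ from $\gamma$ by $\gamma$-dependent left translations, so $\bigvee_{\gamma\in F_n}\phi_{\gamma,y}^{-1}\cQ$ is still not $\cQ^{F_n}$; and the quasitiling patching with error $o(|F_n|)$ is precisely the part left unargued. As written, the lower bound is a plan, not a proof.

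The paper's proof shows the obstacle can be dissolved rather than patched: replace $d_G$ by $\int_G d_G(g\cdot,g\cdot)\,d\nu(g)$ so that $d_G$ is left-translation invariant. Then for $(y,g_1),(y,g_2)$ in the same atom of $\cP^{F}$ (for any finite Borel partition $\cP$ of $Y\times G$ with atoms of diameter at most $\varepsilon/2$) one computes $d_G(\gamma g_1,\gamma g_2)=d_G(\sigma(\gamma,y)\gamma g_1,\sigma(\gamma,y)\gamma g_2)\le\varepsilon/2$ for all $\gamma\in F$: the cocycle cancels \emph{exactly}, with no approximation. Hence each fibre of an atom lies in a translate of the Bowen ball $V=\{g:\max_{\gamma\in F}d_G(\gamma g,e_G)\le\varepsilon/2\}$, so $\Eb(1_P\mid\cB_Y)\le\nu(V)\le s_F(\varepsilon,G)^{-1}$ (since an $(F,\varepsilon)$-separated set $E$ gives disjoint translates $gV$, $g\in E$), whence $\rH_{\mu\times\nu}(\cP^{F}\mid\cB_Y)\ge\log s_F(\varepsilon,G)$ and, by Lemma~\ref{Bowen:lemma}, $\rh_{\mu\times\nu}(\alpha_Y\times_\sigma\alpha_G\mid\cB_Y)\ge\rh(\alpha_G)$ directly, conditioning on all of $\cB_Y$ and with no tiling, no continuity modulus for $\sigma$, and no finiteness assumption on $\rh_\mu(\alpha_Y)$. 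If you want to salvage your route, you would need both to handle the infinite-entropy case separately and to supply the missing error control; the invariant-metric trick makes both issues disappear.
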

\begin{proof}
Note that $\alpha_Y\times_{\sigma} \alpha_G$ is a $G$-extension of $\alpha_Y$ and $\pi(\mu\times \nu)=\mu$, where $\pi$ denotes the projection $Y\times G\rightarrow Y$.
From Lemmas~\ref{outer CVP:lemma}, \ref{inner VP:lemma} and \ref{fibre:lemma} we
have
$$\rh_{\mu\times \nu}(\alpha_Y\times_{\sigma} \alpha_G|\cB_Y)\le \htopol(\alpha_Y\times_{\sigma} \alpha_G|\mu)\le \sup_{y\in Y}\htopol(\alpha_Y\times_{\sigma} \alpha_G|y)=\rh(\alpha_G).$$
Thus it suffices to show $ \rh_{\mu\times \nu}(\alpha_Y\times_{\sigma} \alpha_G|\cB_Y)\ge \rh(\alpha_G)$.

Take compatible metrics $d_Y$ and $d_G$ on $Y$ and $G$ respectively. Replacing $d_G(\cdot, \cdot)$
by $\int_G d_G(g\cdot, g\cdot)\, d\nu(g)$ if necessary, we may assume that $d_G$ is left-translation invariant.
We endow $Y\times G$ with the metric $d_{Y\times G}((y_1, g_1), (y_2, g_2))=\max(d_Y(y_1, y_2), d_G(g_1, g_2))$.

Let $\varepsilon>0$ and $F$ be a nonempty finite subset of $\Gamma$.
Let $E$ be an $(F, \varepsilon)$-separated subset of $G$ with $|E|=s_F(\varepsilon, G)$.
Set $V=\{g\in G: \max_{\gamma \in F}d_G(\gamma g, e_G)\le \varepsilon/2\}$, where
$e_G$ denotes the identity element of $G$.
Then $V$ is a closed subset of $G$, and the sets $gV$ for $g\in E$ are pairwise disjoint.
Thus $1\ge \nu(\bigcup_{g\in E}gV)=\sum_{g\in E}\nu(gV)=|E|\nu(V)$. Therefore $\nu(V)\le |E|^{-1}$.

Let $\cP$ be a finite Borel partition of $Y\times G$ with each item having diameter no bigger than $\varepsilon/2$, under $d_{Y\times G}$.
Let $P$ be an item of $\cP^F$, and let $(y, g_1), (y, g_2)\in P$. Then
for each $\gamma\in F$, one has
\begin{eqnarray*}
\varepsilon/2 &\ge &d_{Y\times G}(\gamma (y, g_1), \gamma (y, g_2))\\
&=&d_{Y\times G}((\gamma y, \sigma(\gamma, y)(\gamma g_1)),
  (\gamma y, \sigma(\gamma, y)(\gamma g_2)))\\
  &=&d_G(\sigma(\gamma, y)(\gamma g_1), \sigma(\gamma, y)(\gamma g_2))\\
  &=&
  d_G(\gamma g_1, \gamma g_2))=d_G(\gamma (g_1^{-1}g_2), e_G),
\end{eqnarray*}
where the last two equalities come from the left-translation invariance of $d_G$. Thus
$g_1^{-1}g_2\in V$ and hence $g_2\in g_1V$. It follows that
$$ \Eb(1_{P}|\cB_Y)(x)=\int_G1_P(\pi(x), g')\, d\nu(g')\le \nu(V)\le |E|^{-1}$$
for $\mu \times \nu$ a.e. $x\in Y\times G$, where $1_P$ denotes the characteristic function of $P$. Therefore
\begin{eqnarray*}
\rH_{\mu\times \nu}(\cP^F|\cB_Y)
&=&\sum_{P\in \cP^F}\int_{Y\times G}-1_P(x)\log \Eb(1_P|\cB_Y)(x)\, d(\mu\times \nu)(x) \\
&\ge & \sum_{P\in \cP^F}\int_{Y\times G}-1_P(x)\log |E|^{-1}\, d(\mu\times \nu)(x) =\log |E|=\log s_F(\varepsilon, G).
\end{eqnarray*}
It follows that $\rh_{\mu\times \nu}(\alpha_X\times_{\sigma}\alpha_G, \cP|\cB_Y)\ge \limsup_{n\to \infty}\frac{1}{|F_n|}\log s_{F_n}(\varepsilon, G)$.
By Lemma~\ref{Bowen:lemma} we get $\rh_{\mu\times  \nu}(\alpha_Y\times_{\sigma} \alpha_G|\cB_Y)\ge \rh(\alpha_G)$ as desired.
\end{proof}

Now we show that every measure-theoretical group extension has a topological model.

\begin{lemma} \label{model:lemma}
Let the assumptions be as in Theorem~\ref{group extension2:thm}.
Then there exists a compact metrizable space $Y'$ containing $Y$ such that
$Y$ is a dense Borel subset of $Y'$, $\cB_Y$ is the restriction of $\cB_{Y'}$ on $Y$, the action of $\Gamma$ on $Y$ extends
to an action of $\Gamma$ on $Y'$ by homeomorphisms, the measure $\mu$ extends to a $\Gamma$-invariant
Borel probability measure on $Y'$, and $\sigma$ extends to a continuous cocycle $\Gamma\times Y'\rightarrow G$.
\end{lemma}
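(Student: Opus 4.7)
The plan is to realize $Y'$ as the Gelfand spectrum of a carefully chosen separable $\Gamma$-invariant unital $C^*$-subalgebra $\cA$ of the algebra $B_\infty(Y)$ of bounded Borel functions on $Y$, chosen large enough to remember the Borel structure of $Y$ and the cocycle $\sigma$. First fix a Polish topology on $Y$ compatible with $\cB_Y$ and pick a countable family $\{h_m\}\subseteq C_b(Y)$ separating points of $Y$. Since $G$ is compact metrizable, pick a countable dense unital $*$-subalgebra $\{g_n\}_{n\in\Nb}$ of $C(G)$. For $n\in\Nb$ and $\gamma\in\Gamma$ define $\sigma_{n,\gamma}(y):=g_n(\sigma(\gamma,y))\in B_\infty(Y)$, and let $\cA$ be the smallest $\Gamma$-invariant unital $C^*$-subalgebra of $B_\infty(Y)$ (under the sup norm) containing all $\sigma_{n,\gamma}$ and all $h_m\circ\gamma^{-1}$. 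Since $\Gamma$ is countable, $\cA$ is generated by countably many functions and hence is separable.

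Let $Y'=\spec(\cA)$, a compact metrizable space with $\cA\cong C(Y')$ via the Gelfand transform $f\mapsto\widetilde f$. The point-evaluation map $i\colon Y\to Y'$ is Borel and injective (since $\{h_m\}\subseteq\cA$ separates points of $Y$) with dense image. By the Lusin--Souslin theorem, $i(Y)$ is a Borel subset of $Y'$ and $i$ is a Borel isomorphism onto it, so we identify $Y$ with $i(Y)\subseteq Y'$; this gives $\cB_Y=\cB_{Y'}|_Y$. The $\Gamma$-invariance of $\cA$ provides an isometric action $\gamma\cdot f=f\circ\gamma^{-1}$ on $\cA$, which dualizes via Gelfand duality to a $\Gamma$-action on $Y'$ by homeomorphisms extending the given action on $Y$, and $i_*\mu$ is the required $\Gamma$-invariant Borel probability measure on $Y'$, concentrated on $Y$.

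It remains to extend $\sigma$ to a continuous cocycle on $Y'$. Fix $\gamma\in\Gamma$ and for each $y'\in Y'$ consider the $\Cb$-valued map $\Lambda_{\gamma,y'}$ on $\{g_n\}$ defined by $\Lambda_{\gamma,y'}(g_n):=\widetilde{\sigma_{n,\gamma}}(y')$. For $y\in Y$, $\Lambda_{\gamma,y}(g_n)=g_n(\sigma(\gamma,y))$, so $\Lambda_{\gamma,y}$ is the restriction to $\{g_n\}$ of the character of $C(G)$ evaluating at $\sigma(\gamma,y)$; in particular it is unital, multiplicative and sup-norm contractive on $\{g_n\}$. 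Each of these three identities is an identity between elements of $C(Y')$ (here we use that $\{g_n\}$ is closed under products, so that e.g.\ $\sigma_{n,\gamma}\sigma_{m,\gamma}=\sigma_{k(n,m),\gamma}$ in $\cA$), hence they pass from the dense subset $Y\subseteq Y'$ to all of $Y'$. Consequently $\Lambda_{\gamma,y'}$ extends uniquely to a character of $C(G)$, defining a point $\sigma'(\gamma,y')\in G$ characterized by $g(\sigma'(\gamma,y'))=\widetilde{g\circ\sigma(\gamma,\cdot)}(y')$ for every $g\in C(G)$. Continuity of $y'\mapsto\sigma'(\gamma,y')$ follows because $\{g_n\}$ separates points of $G$ and each $\widetilde{\sigma_{n,\gamma}}$ is continuous; the cocycle identity \eqref{cocycle:eq} for $\sigma'$ holds on the dense subset $Y$ by hypothesis and then on all of $Y'$ by continuity of both sides in $y'$.

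The main obstacle is this last step: forcing every $y'\in Y'$ to yield a genuine $G$-valued extension of $\sigma$. This is handled by picking $\{g_n\}$ to be a multiplicatively closed unital $*$-subalgebra (rather than just a dense sequence) in $C(G)$, which reduces multiplicativity of $\Lambda_{\gamma,y'}$ to an identity between continuous functions on $Y'$, and by making $\cA$ $\Gamma$-invariant from the outset so that the $\Gamma$-action on $Y$ has an automatic homeomorphism extension.
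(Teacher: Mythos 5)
Your proposal is correct and follows essentially the same route as the paper: take the Gelfand spectrum of a separable $\Gamma$-invariant unital $C^*$-subalgebra of bounded Borel functions on $Y$ containing a countable point-separating family together with the functions $g\circ\sigma(\gamma,\cdot)$, embed $Y$ via evaluation and apply Lusin--Souslin, and recover $\sigma'(\gamma,y')$ as the point of $G$ implementing the character $f\mapsto y'(f\circ\sigma_\gamma)$ of $C(G)$, with the cocycle identity passing to $Y'$ by density and continuity. The only cosmetic differences are that the paper uses indicator functions $1_A$ (rather than bounded continuous functions for a chosen Polish topology) to separate points, and works with all of $C(G)$ at once — so multiplicativity of the character is automatic rather than checked on a countable multiplicatively closed family as you do.
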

\begin{proof}
Denote by $\sB(Y)$ the set of bounded $\Cb$-valued Borel functions on $Y$.
It is complete under the supremum norm $\|\cdot \|$, and is a unital algebra under the pointwise addition
and multiplication. Furthermore, it is a $*$-algebra with the $*$-operation defined
by $f^*(y)=\overline{f(y)}$ for $f\in \sB(Y)$ and $y\in Y$. It is clear that
$\| f^*f\|=\| f\|^2$ for every $f\in \sB(Y)$. Thus $\sB(Y)$ is a unital commutative
$C^*$-algebra (see Section~\ref{SS-determinant and algebra}). Note that the action of $\Gamma$ on $Y$ induces an action of $\Gamma$ on $\sB(Y)$
as isometric $*$-algebra automorphisms naturally.

Since $\cB_Y$ is the Borel $\sigma$-algebra for some Polish topology on $Y$,
we can find  a countable subset $W$ of $\cB_Y$ separating the points of $Y$.
That is, for any distinct $y_1, y_2$ in $Y$, we can find $A\in W$ such that
$1_A(y_1)\neq 1_A(y_2)$, where $1_A$ denotes the characteristic function of $A$.
Set $V_1=\{1_A\in \sB(Y): A\in W\}$.

Note that the algebra $C(G)$ of continuous $\Cb$-valued functions on $G$ is also a normed space under the
supremum norm.
Since $G$ is compact metrizable, $C(G)$ is separable.
Write $\sigma$ as $\sigma_{\gamma}:Y\rightarrow G$ for $\gamma\in \Gamma$. That is,
$\sigma_{\gamma}(y)=\sigma(\gamma, y)$ for $\gamma\in \Gamma$ and $y\in Y$. Then $f\circ \sigma_{\gamma}$ is
in $\sB(Y)$ for every $f\in C(G)$ and $\gamma \in \Gamma$. Set $V_2=\{f \circ \sigma_{\gamma}\in \sB(Y): f\in C(G), \gamma\in \Gamma\}$.
Since $C(G)$ is separable and $\Gamma$ is countable, $V_2$ is a separable subset of $\sB(Y)$.

Denote by $\sA$ the closed $\Gamma$-invariant sub-$*$-algebra of $\sB(Y)$ generated by $V_1\cup V_2$.
Then $\sA$ is separable and contains the constant functions. Denote by $Y'$ the {\it Gelfand spectrum} of
$\sA$, i.e., the set of all unital algebra homomorphisms $\sA\rightarrow \Cb$ \cite[page 219]{Conway}. Note
that $Y'$ is contained in the unit ball of the Banach space dual $\sA'$ of $\sA$ \cite[Proposition VII.8.4]{Conway}.
Endowed with the relative weak$^*$-topology, $Y'$ is a compact Hausdorff space \cite[Proposition VII.8.6]{Conway}.
Since $\sA$ is separable, $Y'$ is metrizable. Clearly the action of $\Gamma$ on $\sA$ induces an action
of $\Gamma$ on $Y'$ by homeomorphisms.

For each $y\in Y$, the evaluation at $y$ gives rise to an element $\psi(y)$ of $Y'$.
Since $W$ separates the points of $Y$, the map $\psi: Y\rightarrow Y'$ is injective.
Consider the {\it Gelfand transform} $\varphi: \sA\rightarrow C(Y')$ defined by
$\varphi(a)(y')=y'(a)$ for $a\in \sA$ and $y'\in Y'$ \cite[page 220]{Conway}.
Note that $\sA$ is a unital commutative $C^*$-algebra.
Thus $\varphi$ is an isometric $*$-isomorphism of $\sA$ onto $C(Y')$ \cite[Theorem VIII.2.1]{Conway}.
Also note that $\varphi(f)\circ \psi=f$ for
every $f\in \sA$. It follows that $\psi$ is measurable and $\Gamma$-equivariant. Recall that a measurable
space $(X, \cB_X)$ is called a  {\it standard Borel space} if
$\cB_X$ is the Borel $\sigma$-algebra for some Polish topology on $X$. The Lusin-Souslin theorem
says that for any injective measurable map $\zeta$ from one standard Borel space $(X, \cB_X)$ to another
standard Borel space $(Z, \cB_Z)$,
the image $\zeta(X)$ is measurable and $\zeta$ is an isomorphism from $(X, \cB_X)$ to $(\zeta(X), \cB_Z|_{\zeta(X)})$ \cite[page 89]{K1}.
Thus, identifying $Y$ with $\psi(Y)$, we have $Y\in \cB_{Y'}$ and $\cB_Y$ is the restriction of $\cB_{Y'}$ on $Y$.
Then $\mu$ can be though of as a Borel probability measure on $Y'$ via setting $\mu(Y'\setminus Y)=0$. Clearly
$\mu$ is still $\Gamma$-invariant. Note that $Y$ separates $\varphi(\sA)=C(Y')$.
By the Urysohn lemma \cite[page 115]{Kelley}, for any disjoint nonempty closed subsets $Z_1$ and $Z_2$ of $Y'$, there exists
$f\in C(Y')$ with $f|_{Z_1}=1$ and $f|_{Z_2}=0$. It follows that $Y$ is dense in $Y'$.

Each $\gamma\in \Gamma$ and each $y'\in Y'$
give rise to a unital algebra homomorphism $C(G)\rightarrow \Cb$ sending $f$ to $y'(f\circ \sigma_{\gamma})$.
Note that every unital algebra homomorphism $C(G)\rightarrow \Cb$ is given by the evaluation at a unique point
of $G$ \cite[Theorem VII.8.7]{K1}. Thus there is a unique point in $G$, denoted by $\sigma'_{\gamma}(y')$, such that
$f(\sigma'_{\gamma}(y'))= y'(f\circ \sigma_{\gamma})$ for every $f\in C(G)$. Clearly the map $\sigma'_{\gamma}:Y'\rightarrow G$
is continuous and extends $\sigma_{\gamma}$ for every $\gamma\in \Gamma$. Write $\sigma'(\gamma, y')$ for
$\sigma'_{\gamma}(y')$. Since $Y$ is dense in $Y'$, by continuity $\sigma'$ also satisfies the cocycle condition (\ref{cocycle:eq}).
\end{proof}

We are ready to prove Theorem~\ref{group extension2:thm}.

\begin{proof}[Proof of Theorem~\ref{group extension2:thm}]
By Lemmas~\ref{model:lemma} and \ref{conditional:lemma} we have $\rh_{\mu\times \nu}(\alpha_Y\times_{\sigma} \alpha_G|\cB_Y)= \rh(G)$.
Thus the desired formula follows from
Ward and Zhang's addition formula $\rh_{\mu\times \nu}(\alpha_Y\times_{\sigma} \alpha_G)=\rh_{\mu}(\alpha_Y)+\rh_{\mu\times \nu}(\alpha_Y\times_{\sigma} \alpha_G|\cB_Y)$ \cite[Theorem 4.4]{WZ} \cite[Theorem 0.2]{Dan}.
\end{proof}

\section{Approximation of Fuglede-Kadison Determinant} \label{S-determinant}

Throughout this section $\Gamma$ will be a discrete amenable group.
As we pointed out at the end of Section~\ref{S-finite group}, one of the main difficulties to establish the intuitive equalities
(\ref{E-intuition}) is that $f_{F_n}$ may fail to be invertible even when $f$ is invertible in $\cL\Gamma$. Our method of dealing with this difficulty is to ``perturb" $f_{F_n}$ to make it invertible. Here the meaning of $S_n\in B(\Cb[F_n])$ being a perturbation of $f_{F_n}$ is that $\rank(S_n-f_{F_n})$ is small compared to $|F_n|$. Our task in the section is to calculate $\ddet_{\cL\Gamma}f$ in terms of the determinants of $S_n$. Though Corollary~\ref{C-determinant formula} gives a precise formula for such a calculation, for some technical reason which will be explained in Remark~\ref{R-uniform bound}, we have
to get an approximate formula as follows:

\begin{theorem} \label{T-approximate determinant formula}
Let $f\in \Cb\Gamma$ be  invertible in $\cL\Gamma$. For any $C_1>0$ and $\varepsilon>0$, there exists $\delta>0$ such
that if $\{F_n\}_{n\in J}$ is  a (left) F{\o}lner net of $\Gamma$ and  $S_n\in B(\Cb[F_n])$  is  invertible for each
$n\in J$ such that $\sup_{n\in J}\max(\|S_n\|, \|S_n^{-1}\|)\le C_1$ and $\limsup_{n\to \infty}\frac{\rank(S_n-f_{F_n})}{|F_n|}\le \delta$, then
\begin{eqnarray*}
\limsup_{n\to \infty}\big|\log \ddet_{\cL\Gamma} f-\frac{1}{|F_n|}\log |\det S_n|\big|<\varepsilon.
\end{eqnarray*}
\end{theorem}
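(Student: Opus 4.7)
The plan is to reduce the claim to a trace comparison, then combine a polynomial approximation of $\log$ with rank--norm estimates. Writing $\log \ddet_{\cL\Gamma} f = \tfrac{1}{2}\tr_{\cL\Gamma}\log(f^*f)$ and $\tfrac{1}{|F_n|}\log|\det S_n| = \tfrac{1}{2}\tr_{B(\Cb[F_n])}\log(S_n^*S_n)$, and setting $g := f^*f \in \Cb\Gamma$, the invertibility of $f$ in $\cL\Gamma$ forces $\sigma(g) \subseteq [\|f^{-1}\|^{-2}, \|f\|^2]$, while the hypothesis on $S_n$ forces $\sigma(S_n^*S_n) \subseteq [C_1^{-2},C_1^2]$; both lie in a common compact interval $[A,B] \subset (0,\infty)$ determined only by $f$ and $C_1$. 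The first step is to fix, once and for all, a polynomial $P \in \Rb[t]$ with $\sup_{t \in [A,B]}|P(t)-\tfrac{1}{2}\log t| < \varepsilon/4$. By continuous functional calculus and the norm-contractivity of tracial states, $P$ brings each of the two traces within $\varepsilon/4$ of the required quantity, so the problem reduces to bounding $|\tr_{\cL\Gamma}P(g) - \tr_{B(\Cb[F_n])}P(S_n^*S_n)|$ by $\varepsilon/2$ in the limit. Now $P(g) \in \Cb\Gamma$, and commutation of $\cL\Gamma$ with the right regular representation gives $\langle P(g)\delta_\gamma,\delta_\gamma\rangle = \tr_{\cL\Gamma}P(g)$ for every $\gamma \in \Gamma$, yielding the \emph{exact} identity $\tr_{\cL\Gamma}P(g) = \tr_{B(\Cb[F_n])}(P(g))_{F_n}$ for all $n$.

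It therefore remains to bound $|\tr_{B(\Cb[F_n])}Y_n|$ with $Y_n := (P(g))_{F_n} - P(S_n^*S_n)$, for which I would use $|\tr_{B(\Cb[F_n])}Y_n| \le \rank(Y_n)\cdot \|Y_n\|/|F_n|$. The norm $\|Y_n\|$ is uniformly bounded by a constant $M = M(P, f, C_1)$ coming from the spectral bounds. To estimate the rank, write $P(t) = \sum_k c_k t^k$ and split
\begin{equation*}
(g^k)_{F_n} - (S_n^*S_n)^k \;=\; \bigl((g^k)_{F_n} - (g_{F_n})^k\bigr) \;+\; \bigl((g_{F_n})^k - (S_n^*S_n)^k\bigr).
\end{equation*}
The first parenthesis is controlled by the inductive identity $(g^k)_{F_n} = g_{F_n}(g^{k-1})_{F_n} + p_{F_n} g(1-p_{F_n})g^{k-1}\iota_{F_n}$: each excess term factors through an operator whose range lies in $\Cb[K_g^j F_n \setminus F_n]$, giving a rank bound $\sum_{j=1}^{k-1}|K_g^j F_n \setminus F_n| = o(|F_n|)$ along any F{\o}lner net. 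The second parenthesis, by the telescoping identity $A^k - B^k = \sum_{j=0}^{k-1}A^j(A-B)B^{k-1-j}$, has rank at most $k\cdot \rank(g_{F_n} - S_n^*S_n)$; in turn $\rank(g_{F_n} - S_n^*S_n) \le |K_f F_n \setminus F_n| + 2\rank(S_n - f_{F_n})$, because $(f^*f)_{F_n} - f_{F_n}^*f_{F_n} = p_{F_n}f^*(1-p_{F_n})f\iota_{F_n}$ factors through an operator of rank at most $|K_f F_n \setminus F_n|$, and $S_n^*S_n - f_{F_n}^*f_{F_n}$ splits as $(S_n^* - f_{F_n}^*)S_n + f_{F_n}^*(S_n - f_{F_n})$. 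Summing over $k$ and taking $\limsup_n$, one obtains $\limsup_n \rank(Y_n)/|F_n| \le 2\deg(P)\,\delta$, hence $\limsup_n |\tr_{B(\Cb[F_n])}Y_n| \le 2M\deg(P)\,\delta$.

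The main subtlety, and the feature that dictates the order of quantifiers, is that $P$ (and hence both $M$ and $\deg P$) must be chosen \emph{before} $\delta$, depending only on $f$, $C_1$, and $\varepsilon$. Once $P$ is fixed, one selects $\delta < \varepsilon/(4M\deg(P))$, so the aggregated error $\varepsilon/4 + \varepsilon/4 + 2M\deg(P)\,\delta$ is less than $\varepsilon$. The bookkeeping cleanly separates two genuinely different contributions: the compression-of-a-product error, handled purely by the F{\o}lner boundary estimates $|K_g^j F_n \setminus F_n| = o(|F_n|)$; and the perturbation error $\rank(S_n - f_{F_n}) \le \delta|F_n|$, handled by the hypothesis. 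The hard part is not any single estimate but precisely this coordination --- making sure that the spectral interval, and hence $P$, $M$, and $\deg P$, can be frozen in terms of $f$, $C_1$, and $\varepsilon$ alone, so that the residual $\delta$-dependence is linear with a fixed coefficient.
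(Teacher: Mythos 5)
Your proposal is correct, and its skeleton matches the paper's proof: a common spectral interval for $f^*f$ and $S_n^*S_n$ determined by $f$ and $C_1$, a polynomial approximation of $\log$ on that interval fixed \emph{before} $\delta$, a rank--norm bound on the trace of the discrepancy, and a final choice of $\delta$ linear in $\varepsilon$ with a coefficient depending only on $f$, $C_1$, $\varepsilon$. Where you genuinely diverge is in how the von Neumann trace is connected to the finite compressions. The paper applies the polynomial to the compression $(f^*f)_{F_n}$ and invokes the L\"uck--Schick approximation theorem $\tr_{\cL\Gamma}(Q(T))=\lim_n\frac{1}{|F_n|}\tr(Q(T_{F_n}))$ as a black box, then controls $Q(S_n^*S_n)-Q((f^*f)_{F_n})$ by expanding $Q(X+Y)-Q(X)$ into noncommutative monomials each containing $Y$. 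You instead compress the polynomial itself, use the exact identity $\frac{1}{|F_n|}\tr\bigl((P(f^*f))_{F_n}\bigr)=\tr_{\cL\Gamma}P(f^*f)$ (valid because all diagonal matrix coefficients of an element of $\cL\Gamma$ equal its trace), and then absorb the difference between $(g^k)_{F_n}$ and $(g_{F_n})^k$ into a rank term bounded by F{\o}lner boundary sets $|K_g^jF_n\setminus F_n|=o(|F_n|)$, with the remaining perturbation handled by telescoping $A^k-B^k$. In effect you re-prove inline the special case of L\"uck's approximation needed here, which makes the argument more self-contained; the paper's route is shorter at the cost of the citation, and its monomial expansion and your telescoping are equivalent bookkeeping. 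One small correction: summing the telescoping bound $k\cdot\rank(g_{F_n}-S_n^*S_n)$ over $k=1,\dots,\deg P$ gives $\limsup_n\rank(Y_n)/|F_n|\le \deg P(\deg P+1)\,\delta$ rather than $2\deg(P)\,\delta$; this is harmless, since the constant still depends only on $P$ (hence only on $f$, $C_1$, $\varepsilon$), and you need only shrink $\delta$ accordingly in the final step.
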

\begin{proof} Let $\delta>0$ be a small number whose value will be determined later. Let $ \{F_n\}_{n\in J}$ and $\{S_n\}_{n\in J}$ satisfy
the hypothesis.

Note that $\sup_{n\in J}\max(\|S^*_nS_n\|, \|(S^*_nS_n)^{-1}\|)\le C^2_1$.
Thus there is a closed finite interval $I$ in $\Rb$ depending only on $C_1$ such that $I$ does not contain $0$ and the spectra of
$f^*f$ and
$S^*_nS_n$ are contained in $I$ for each $n\in J$.

Let $n\in J$.  From
\begin{align*}
S_n^*S_n-(f_{F_n})^*f_{F_n}=(S_n-f_{F_n})^*S_n+(f_{F_n})^*(S_n-f_{F_n})
\end{align*}
we have
\begin{align*}
\rank(S_n^*S_n-(f_{F_n})^*f_{F_n})&\le \rank((S_n-f_{F_n})^*S_n)+\rank((f_{F_n})^*(S_n-f_{F_n}))\\
&\le \rank((S_n-f_{F_n})^*)+\rank(S_n-f_{F_n})\\
&= 2\rank(S_n-f_{F_n}).
\end{align*}
Recall the operators $p_{F_n}$ and $\iota_{F_n}$ in Notation~\ref{N-compression} and the set $K_f$ in Notation~\ref{N-support}. When restricted on $\ell^2(\Gamma)$, one has $p_{F_n}=(\iota_{F_n})^*$.
Thus
\begin{align*}
(f_{F_n})^*f_{F_n}-(f^*f)_{F_n}&=(f_{F_n})^*f_{F_n}-p_{F_n}f^*f\iota_{F_n} \\
&=(f_{F_n})^*f_{F_n}-(f\iota_{F_n})^*f\iota_{F_n} \\
&=(f_{F_n}-f\iota_{F_n})^*f_{F_n}+(f\iota_{F_n})^*(f_{F_n}-f\iota_{F_n}),
\end{align*}
and hence
\begin{align*}
\rank((f_{F_n})^*f_{F_n}-(f^*f)_{F_n})&\le \rank((f_{F_n}-f\iota_{F_n})^*f_{F_n})+\rank((f\iota_{F_n})^*(f_{F_n}-f\iota_{F_n}))\\
&\le \rank((f_{F_n}-f\iota_{F_n})^*)+\rank(f_{F_n}-f\iota_{F_n})\\
&=2\rank(f_{F_n}-f\iota_{F_n})\\
&\le 2|K_fF_n\setminus F_n|.
\end{align*}
Therefore
\begin{align*}
 \rank(S^*_nS_n-(f^*f)_{F_n})&\le \rank(S_n^*S_n-(f_{F_n})^*f_{F_n})+\rank((f_{F_n})^*f_{F_n}-(f^*f)_{F_n})\\
&\le 2\rank(S_n-f_{F_n})+2|K_fF_n\setminus F_n|.
\end{align*}
It follows that
$$ \limsup_{n\to \infty}\frac{\rank(S^*_nS_n-(f^*f)_{F_n})}{|F_n|}\le \limsup_{n\to \infty}\frac{2\rank(S_n-f_{F_n})}{|F_n|}\le 2\delta.$$

Denote by $\tr$ the trace of $B(\Cb[F_n])$ taking value $1$ on minimal projections.
By the Weierstrass approximation theorem \cite[page 312]{Rudin} we can find a real polynomial $Q$ such that $|Q(x)-\log x|\le \varepsilon/2$ for all $x\in I$. Then
\begin{eqnarray} \label{E-approximate1}
\frac{1}{|F_n|}|\tr(Q(S))-\tr(\log S)|\le \|Q(S)-\log S\|\le \varepsilon/2
\end{eqnarray}
for all self-adjoint $S\in B(\Cb[F_n])$ with spectrum contained in $I$, and
\begin{eqnarray} \label{E-approximate2}
\|\tr_{\cL\Gamma}(Q(T))-\tr_{\cL\Gamma}(\log T)\|\le \|Q(T)-\log T\|\le \varepsilon/2
\end{eqnarray}
for all self-adjoint $T\in \cL\Gamma$ with spectrum contained in $I$.

For noncommutative variables $X$ and $Y$, we have
$Q(X+Y)=Q(X)+\sum_{j=1}^kQ_j(X, Y)$ for some two-variable noncommutative monomials $Q_j$ with $Y$ appearing in $Q_j$.
Fix $1\le j\le k$. Then $\sup_{n\in J}\|Q_j((f^*f)_{F_n}, S^*_nS_n-(f^*f)_{F_n})\|\le D_j$ for some constant $D_j$ depending only on $Q_j$, $\|f\|$ and $C_1$. Furthermore,
$$\limsup_{n\to \infty}\frac{\rank(Q_j((f^*f)_{F_n}, S^*_nS_n-(f^*f)_{F_n}))}{|F_n|}\le \limsup_{n\to \infty}\frac{\rank(S^*_nS_n-(f^*f)_{F_n})}{|F_n|}\le  2\delta.$$
For any $S\in B(\Cb[F_n])$, extending an orthonormal basis $e_1, \dots, e_{\rank(S)}$ of the range of $S$ to
an orthonormal basis $e_1, \dots, e_{|F_n|}$ of $\Cb[F_n]$, one sees that
$e_{\rank(S)+1},\dots, e_{|F_n|}$ are orthogonal to the range of $S$, and hence
$$|\tr(S)|=|\sum_{j=1}^{|F_n|}\left< Se_j, e_j\right>|=|\sum_{j=1}^{\rank(S)}\left< Se_j, e_j\right>|\le \sum_{j=1}^{\rank(S)}|\left< Se_j, e_j\right>|\le \rank(S)\cdot \|S\|.$$
It follows that
$\limsup_{n\to \infty}\frac{|\tr(Q_j((f^*f)_{F_n}, S^*_nS_n-(f^*f)_{F_n}))|}{|F_n|}\le 2\delta D_j$, and hence
$$ \limsup_{n\to \infty}\frac{1}{|F_n|}|\tr(Q(S^*_nS_n))-\tr(Q((f^*f)_{F_n}))|\le 2\delta D,$$
where $D=\sum_{j=1}^kD_j$.
By a result of L\"{u}ck and Schick \cite{Luck94} \cite[Lemma 4.6]{Schick01} \cite[Lemma 13.42]{Luck} \cite[page 745]{Den}, for any $T\in \cL\Gamma$ one has
$$ \tr_{\cL\Gamma}(Q(T))=\lim_{n\to \infty}\frac{1}{|F_n|}\tr(Q(T_{F_n})).$$
Thus
\begin{eqnarray} \label{E-approximate3}
 \limsup_{n\to \infty}\big|\tr_{\cL\Gamma}(Q(f^*f))-\frac{1}{|F_n|}\tr(Q(S^*_nS_n))\big|\le 2\delta D.
\end{eqnarray}

Combining (\ref{E-approximate1}), (\ref{E-approximate2}) and (\ref{E-approximate3}) together, we get
$$ \limsup_{n\to \infty}\big|\tr_{\cL\Gamma}(\log (f^*f))-\frac{1}{|F_n|}\tr(\log (S^*_nS_n))\big|\le \varepsilon+2\delta D.$$
That is,
$$ \limsup_{n\to \infty}\big|\log \ddet_{\cL\Gamma} (f^*f)-\frac{1}{|F_n|}\log \det (S^*_nS_n)\big|\le \varepsilon+2\delta D.$$
As $\log \ddet_{\cL\Gamma} (f^*f)=2\log \ddet_{\cL\Gamma} f$ and $\det(S^*_nS_n)=|\det S_n|^2$, we get
\begin{eqnarray*}
\limsup_{n\to \infty}\big|\log \ddet_{\cL\Gamma} f-\frac{1}{|F_n|}\log |\det S_n|\big|\le \varepsilon/2+\delta D.
\end{eqnarray*}
Now we just need to take $\delta<\varepsilon/(2D)$.
\end{proof}

\begin{corollary} \label{C-determinant formula}
Let $f\in \Cb\Gamma$ be  invertible in $\cL\Gamma$. Let $\{F_n\}_{n\in J}$ be a (left) F{\o}lner net of $\Gamma$ and  $S_n\in B(\Cb[F_n])$  be   invertible for each $n\in J$
such that $\sup_{n\in J}\max(\|S_n\|, \|S_n^{-1}\|)<\infty$ and $\lim_{n\to \infty}\rank(S_n-f_{F_n})/|F_n|=0$. Then
\begin{eqnarray*}
\log \ddet_{\cL\Gamma} f=\lim_{n\to \infty}\frac{1}{|F_n|}\log |\det S_n|.
\end{eqnarray*}
\end{corollary}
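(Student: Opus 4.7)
The plan is to deduce Corollary~\ref{C-determinant formula} as an immediate qualitative reformulation of Theorem~\ref{T-approximate determinant formula}. First I would set $C_1 := \sup_{n\in J}\max(\|S_n\|, \|S_n^{-1}\|)$, which is finite by hypothesis. Fix an arbitrary $\varepsilon > 0$. Applying Theorem~\ref{T-approximate determinant formula} to $f$ with these $C_1$ and $\varepsilon$ produces a $\delta > 0$ with the property that any F{\o}lner net and any sequence of invertibles $S_n\in B(\Cb[F_n])$ with norms and inverse norms bounded by $C_1$ and with $\limsup_n \rank(S_n - f_{F_n})/|F_n| \le \delta$ must satisfy
\begin{equation*}
\limsup_{n\to\infty}\Bigl|\log\ddet_{\cL\Gamma}f - \tfrac{1}{|F_n|}\log|\det S_n|\Bigr| < \varepsilon.
\end{equation*}

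The key observation is that the corollary's stronger hypothesis $\lim_{n\to\infty}\rank(S_n - f_{F_n})/|F_n| = 0$ trivially implies $\limsup_n \rank(S_n - f_{F_n})/|F_n| \le \delta$ for every $\delta > 0$, in particular for the $\delta$ produced by the theorem. Hence the theorem applies to the given data and yields the displayed $\varepsilon$-bound. Since $\varepsilon > 0$ was arbitrary,
\begin{equation*}
\limsup_{n\to\infty}\Bigl|\log\ddet_{\cL\Gamma}f - \tfrac{1}{|F_n|}\log|\det S_n|\Bigr| = 0,
\end{equation*}
which is equivalent to $\lim_{n\to\infty}\frac{1}{|F_n|}\log|\det S_n| = \log\ddet_{\cL\Gamma}f$, as claimed.

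I do not expect any obstacle here: the substantive work (polynomial approximation of $\log$ on a spectral interval forced by the $C_1$ bound, the rank estimates showing $\rank(S_n^*S_n - (f^*f)_{F_n}) = O(\rank(S_n-f_{F_n}) + |K_f F_n \setminus F_n|)$, and the L\"uck--Schick trace approximation $\tr_{\cL\Gamma}Q(T) = \lim_n |F_n|^{-1}\tr Q(T_{F_n})$) is entirely absorbed into Theorem~\ref{T-approximate determinant formula}. The only content of the corollary is to pass from the quantitative $\delta$--$\varepsilon$ form to the clean limit under the vanishing rank-perturbation hypothesis, which is a one-line deduction.
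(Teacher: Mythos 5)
Your deduction is correct and is exactly how the paper intends the corollary to follow: the paper gives no separate argument, treating it as the immediate $\varepsilon$--$\delta$ specialization of Theorem~\ref{T-approximate determinant formula}, which is precisely what you carry out. No gaps.
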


\section{Proof of $\rh(\alpha_f)\ge \log \ddet_{\cL\Gamma}f$} \label{S-proof of ineqaulity}

In this section we show $\rh(\alpha_f)\ge \log \ddet_{\cL\Gamma}f$ for any $f\in \Zb\Gamma$ invertible in $\cL\Gamma$ (Lemma~\ref{L-lower bound}).
Throughout this section $\Gamma$ is a discrete amenable group.

For $f\in \Cb\Gamma$, recall that $K_f$ denotes the union of the supports of $f$ and $f^*$, and the identity of $\Gamma$.
For a finite subset $F$ of $\Gamma$, we identify $\Cb[F]$ with a subspace of $\ell^2(\Gamma)$ naturally.
In particular, if $F'\subseteq F$ are finite subsets of $\Gamma$, then $\Cb[F]$ is the direct sum of $\Cb[F']$ and $\Cb[F\setminus F']$.

\begin{lemma} \label{L-rational}
Let $f\in \Zb \Gamma$ be invertible in $\cL\Gamma$.
Then for any $\lambda>1$ and $C_1\ge 1$, there is some $\delta>0$ such that, for any $M\ge 1$ and any nonempty finite subsets
$F'\subseteq F$ of $\Gamma$ satisfying $|K_fF\setminus F|\le \delta|F|$ and $|F\setminus F'|\le \delta |F|$, if
$T_F$ is a linear map $\Cb[F\setminus F']\rightarrow \Cb[F]$
with $MT_F(\Zb[F\setminus F'])\subseteq \Zb[F]$ and $\|T_F\|\le C_1$ so that the linear map $S_F: \Cb[F]\rightarrow \Cb[F]$ defined
as $f_F$ on $\Cb[F']$ and $T_F$ on $\Cb[F\setminus F']$ is invertible in $B(\Cb[F])$,
then
$$ C\lambda^{|F|}M^{|K_fF\setminus F|}r_{F, \infty}(\frac{1}{8\|f\|_1})\ge |\det S_F|,$$
where $C$ is the universal constant in Lemma~\ref{ball:lemma}.
\end{lemma}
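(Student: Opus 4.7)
The plan is to adapt the proof of Lemma~\ref{L-positive large} by using $MS_F$ as the lattice-defining matrix in place of $f_F$ (exploiting that $MS_F$ is invertible on $\Cb[F]$ and preserves $\Zb[F]$), and to track how the scaling by $M$ propagates through the ball-counting argument to produce the factor $M^{|K_fF\setminus F|}$ on the right-hand side.

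First I would set $D = 8\|f\|_1$ and $\varepsilon = 1/D$, and choose $\delta > 0$ small enough that $(D\|f^{-1}\|(\|f\|+C_1))^{\delta}\le \lambda^{1/2}$ and that $\delta$ satisfies the conclusion of Lemma~\ref{ball:lemma} for $\lambda' = \lambda^{1/2}$. Let $E \subseteq X_f$ be a minimum $\varepsilon$-spanning subset under $d_{\vartheta,F,\infty}$, so $|E| = r_{F,\infty}(\varepsilon)$. Next, consider the finite group $G = \Zb[F]/MS_F\Zb[F]$. By hypothesis $MS_F$ preserves $\Zb[F]$, and Lemma~\ref{L-determinant quotient group size} gives $|G| = |\det(MS_F)| = M^{|F|}|\det S_F|$, so it suffices to bound $|G|$. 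For each $x \in G$, use the invertibility of $MS_F$ to select a representative $\tilde x = MS_F u_x \in \Zb[F]$ with $u_x \in [0,1)^F$, giving $\|\tilde x\|_2 \le M(\|f\|+C_1)|F|^{1/2}$, and set $\psi(x) = \varphi(f^{-1}\tilde x) \in X_f$ (which lies in $X_f$ because $\tilde x \in \Zb[[\Gamma]]$ forces $f\psi(x) = \varphi(\tilde x) = 0$).

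For each $x \in G$, the spanning property provides $y_x \in E$ with $d_{F,\infty}(\psi(x), y_x)\le \varepsilon$. Cluster $G$ by picking $w_x \in \frac{1}{D}\Zb[K_fF\setminus F]$ approximating $p_{K_fF\setminus F}(f^{-1}\tilde x)$ coordinatewise within $1/D = \varepsilon$. Since $\|Dw_x\|_2 \le DM\|f^{-1}\|(\|f\|+C_1)|F|^{1/2}$ and $|K_fF\setminus F|\le \delta|F|$, Lemma~\ref{ball:lemma} (with $\lambda' = \lambda^{1/2}$) bounds the number of distinct values of $Dw_x$ by $C\lambda^{|F|}M^{|K_fF\setminus F|}$, where the extra factor $M^{|K_fF\setminus F|}$ compared with Lemma~\ref{L-positive large} comes precisely from the $M$ in the ball radius.

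The substantive step is to show within-cluster injectivity of $x \mapsto y_x$. Suppose $x_1 \ne x_2$ lie in the same cluster with $y_{x_1} = y_{x_2}$, so $d_{F,\infty}(\psi(x_1), \psi(x_2))\le 2\varepsilon$. Following Lemma~\ref{L-positive large} verbatim, construct $h \in \Zb[F]$ from the coordinate-wise nearest integers and set $z = f^{-1}(\tilde x_1 - \tilde x_2) - h \in \Rb[[\Gamma]]$. Combining the $F$-estimate with the cluster condition on $K_fF\setminus F$ yields $\|z|_{K_fF}\|_\infty \le 2\varepsilon$, so splitting $z = z_1 + z_2$ with $\mathrm{supp}(z_1)\subseteq K_fF$ gives $p_F(fz) = p_F(fz_1)$ with $\|p_F(fz)\|_\infty \le 2\varepsilon\|f\|_1 = 1/4 < 1/2$. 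Since $p_F(fz)\in \Zb[F]$, it vanishes, and $\tilde x_1 - \tilde x_2 = f_F h$. Writing $\tilde x_i = MS_F u_i$ converts this to $MS_F(u_1-u_2) = f_F h$ with $u_1-u_2 \in (-1,1)^F$; to finish, one must conclude $u_1 = u_2$, i.e.\ $\tilde x_1 - \tilde x_2 \in MS_F\Zb[F]$. I expect this to be the main obstacle: the derived identity involves $f_F$ while the quotient lattice uses $MS_F$, and the mismatch is controlled by the rank-$\le|F\setminus F'|\le \delta|F|$ perturbation $f_F - S_F$ (which vanishes on $\Cb[F']$). The argument should combine $f_Fh\in MS_F(-1,1)^F\cap \Zb[F]$ with the agreement $f_F|_{\Cb[F']}=S_F|_{\Cb[F']}$ and the small perturbation of $S_F^{-1}f_F$ from the identity to force $f_Fh \in MS_F\Zb[F]$, hence $u_1=u_2$, contradicting $x_1\ne x_2$.

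Granted the within-cluster injectivity, $|G| \le C\lambda^{|F|}M^{|K_fF\setminus F|}|E|$, and since $|\det S_F| = |G|/M^{|F|} \le |G|$, one concludes
\[
 |\det S_F| \;\le\; C\lambda^{|F|}M^{|K_fF\setminus F|}\,r_{F,\infty}\bigl(\tfrac{1}{8\|f\|_1}\bigr),
\]
as required. The hard part is the lattice deduction $f_Fh\in MS_F\Zb[F]$ in the within-cluster step; the rest is a careful transcription of the proof of Lemma~\ref{L-positive large} with $MS_F$ in place of $f_F$.
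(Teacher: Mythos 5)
Your reduction to the finite group $G=\Zb[F]/MS_F\Zb[F]$ cannot be made to work, and the step you yourself flag as the main obstacle is not merely hard but false. Since $|G|=|\det(MS_F)|=M^{|F|}|\det S_F|$, your scheme (number of clusters $\le C\lambda^{|F|}M^{|K_fF\setminus F|}$ times within-cluster injectivity of $x\mapsto y_x$) would yield $M^{|F|}|\det S_F|\le C\lambda^{|F|}M^{|K_fF\setminus F|}\,r_{F,\infty}(\varepsilon)$. But with $F$, $F'$, $T_F$ fixed, every larger admissible $M$ still satisfies the hypotheses (and when $F'=F$ the constraint on $M$ is vacuous), while the right-hand side grows at most like $M^{\delta|F|}$ and the left-hand side like $M^{|F|}$; so the inequality your argument would prove is false for large $M$. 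Since the cluster count via Lemma~\ref{ball:lemma} is correct, it is the within-cluster injectivity that fails: already for $F'=F$, $S_F=f_F$, two classes mod $Mf_F\Zb[F]$ whose representatives differ by $f_Fh$ with $0\neq h\in\Zb[F]$, $\|h\|_\infty<M$, satisfy all the conditions you derive (here $u_1-u_2=h/M\in(-1,1)^F$) yet are distinct in $G$, and no perturbation argument about $S_F^{-1}f_F$ can help since $S_F=f_F$ there. The conclusion you need, $\tilde x_1-\tilde x_2\in MS_F\Zb[F]$, is simply not implied by $\tilde x_1-\tilde x_2=f_Fh$.

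The paper repairs exactly these two points. First, it scales by $M$ only on the defective block: it introduces $S'_F$ equal to $f_F$ on $\Cb[F']$ and to $MT_F$ on $\Cb[F\setminus F']$, so that $S'_F$ preserves $\Zb[F]$ and $|\Zb[F]/S'_F\Zb[F]|=M^{|F\setminus F'|}|\det S_F|$ carries only a small power of $M$, which is then absorbed because $|K_fF\setminus F'|=|K_fF\setminus F|+|F\setminus F'|$. Second — and this is precisely what removes your mismatch between $f_F$ and the lattice map — the nearest-integer vector is split into $h_x\in\Zb[F']$ and $\theta_x\in\Zb[F\setminus F']$, and the clustering is performed over $\theta_x+Dw_x\in\Zb[K_fF\setminus F']$ (its $\ell^2$-norm is $O(M|F|^{1/2})$, so Lemma~\ref{ball:lemma} gives at most $C\lambda^{|F|}M^{|K_fF\setminus F'|}$ clusters). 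Within a cluster the surviving integer vector $h_{x_1}-h_{x_2}$ is supported in $F'$, where $f_F$ and $S'_F$ agree, so the integrality argument gives $\tilde x_1-\tilde x_2=S'_F(h_{x_1}-h_{x_2})\in S'_F\Zb[F]$ and injectivity holds. (The paper also works directly with the sets $W_v=\{x:\ d_{F,\infty}(\psi(x),v)\le\varepsilon\}$ for $v$ in a spanning set rather than a map $x\mapsto y_x$, but that difference is cosmetic; the substantive repairs are the choice of $S'_F$ and the extra clustering variable $\theta_x$.)
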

\begin{proof} The proof is similar to that of Lemma~\ref{L-positive large}. Write $K$ for $K_f$.
Set $D=8\|f\|_1$ and $\varepsilon=D^{-1}$. Take $1>\delta>0$ such that
$(2D(\|f\|+C_1)\|f^{-1}\|)^{2\delta}\le \lambda^{1/2}$, and $\delta^{1/2}\le \|f^{-1}\|$, and that $\delta'=2\delta$  satisfies the conclusion of
Lemma~\ref{ball:lemma} for $\lambda'=\lambda^{1/2}$.
Let $F$, $F'$,
and $T_F$ satisfy the hypothesis.

Consider $S'_F\in B(\Cb[F])$ defined as $f_F$ on $\Cb[F']$ and $MT_F$ on $\Cb[F\setminus F']$.
Then $\det S'_F=M^{|F\setminus F'|}\det S_F$ and $\|S'_F\|\le \|f\|+MC_1\le (\|f\|+C_1)M$.
Note that
$S'_F(\Zb[F])\subseteq \Zb[F]$, and hence $\det S'_F=|\Zb[F]/S'_F\Zb[F]|$ by Lemma~\ref{L-determinant quotient group size}.
Thus it suffices to
show
$$ C\lambda^{|F|}M^{|KF\setminus F'|}r_{F, \infty}(\varepsilon)\ge |\Zb[F]/S'_F\Zb[F]|.$$

Let $x\in \Zb[F]/S'_F\Zb[F]$. Take
$\tilde{x}\in \Zb[F]$ such that the image of $\tilde{x}$ in $\Zb[F]/S'_F\Zb[F]$ under the quotient
map $\Zb[F]\rightarrow \Zb[F]/S'_F\Zb[F]$ is
equal to $x$. Since $S'_F$ is invertible, one has
$$ \tilde{x}=S'_Fw$$
for some $w\in \Rb[F]$. Write
$w$ as $w_1+w_2$ for some $w_1\in \Zb[F]$ and $w_2\in
[0,1)^F$. Then $\tilde{x}=S'_Fw_1+S'_Fw_2$ and
$$\|S'_Fw_2\|_2\le \|S'_F\|\cdot \|w_2\|_2\le (\|f\|+C_1)M|F|^{1/2}.$$
Note that $\tilde{x}$ and
$S'_Fw_2$ have the same image in $\Zb[F]/S'_F\Zb[F]$. Thus we may replace
$\tilde{x}$ by $S'_Fw_2$ and hence assume that
$\|\tilde{x}\|_2\le (\|f\|+C_1)M|F|^{1/2}$.

Denote by $\varphi$ the quotient map $\Rb[[\Gamma]]\rightarrow
(\Rb/\Zb)[[\Gamma]]$.
For each $x\in \Zb[F]/S'_F\Zb[F]$, one has
$$ f\varphi(f^{-1}\tilde{x})=\varphi(f(f^{-1}\tilde{x}))=\varphi(\tilde{x})=0$$
in $(\Rb/\Zb)[[\Gamma]]$, and hence $\varphi(f^{-1}\tilde{x})\in
X_f$ by \eqref{E-X}.
This defines a map $\psi: \Zb[F]/S'_F\Zb[F]\rightarrow X_f$ sending $x$ to $\varphi(f^{-1}\tilde{x})$.

For each $x\in \Zb[F]/S'_F\Zb[F]$, pick $w_x\in
\frac{1}{D}\Zb[KF\setminus F]$ such that
$$\|w_x-p_{KF\setminus
F}(f^{-1}\tilde{x})\|_{\infty}\le 1/D=\varepsilon$$
and $|w_x(t)|\le |(f^{-1} \tilde{x})(t)|$ for all $t\in KF\setminus F$.
Then $Dw_x\in \Zb[KF\setminus F]$ and
$$\|Dw_x\|_2\le D\|p_{KF\setminus F}(f^{-1} \tilde{x})\|_2\le
D\cdot \|f^{-1}\|\cdot \|\tilde{x}\|_2 \le D(\|f\|+C_1)M\|f^{-1}\|\cdot
|F|^{1/2}.$$

Take an $[F, \infty, \varepsilon]$-spanning subset $E\subseteq X_f$ with $|E|=r_{F, \infty}(\varepsilon)$.
For each $v\in E$ set $W_v=\{x\in \Zb[F]/S'_F\Zb[F]: d_{F, \infty}(\psi(x), v)\le \varepsilon\}$.
Then $\bigcup_{v\in E}W_v=\Zb[F]/S'_F\Zb[F]$.
Now it suffices to show that
$$|W_v|\le C\lambda^{|F|}M^{|KF\setminus F'|}$$
for each $v\in E$.
Fix $v\in E$ and $y\in W_v$.

Let $x\in W_v$. Then
$$\max_{\gamma \in F}\vartheta((\psi(x))_\gamma, (\psi(y))_\gamma)=d_{F, \infty}(\psi(x), \psi(y))\le d_{F, \infty}(\psi(x), v)+d_{F, \infty}(\psi(y), v)\le 2\varepsilon.$$
For each $\gamma \in F'$, take $(h_x)_\gamma \in \Zb$ such that $|(f^{-1}\tilde{x})_\gamma-(f^{-1}\tilde{y})_\gamma-(h_x)_\gamma|\le 2\varepsilon$.
Similarly, for each $\gamma \in F\setminus F'$, take $(\theta_x)_\gamma \in \Zb$ such that $|(f^{-1}\tilde{x})_\gamma-(f^{-1}\tilde{y})_\gamma-(\theta_x)_\gamma|\le 2\varepsilon$.
Define $h_x\in \Zb[F']$ to be the element taking value $(h_x)_\gamma$ at each $\gamma \in F'$.
Also define $\theta_x\in \Zb[F\setminus F']$ to be the element taking value $(\theta_x)_\gamma$ at each $\gamma \in F\setminus F'$.
Set
\begin{eqnarray} \label{E-compare}
z_x=f^{-1}\tilde{x}-f^{-1}\tilde{y}-h_x-\theta_x-w_x+w_y\in \Rb[[\Gamma]].
\end{eqnarray}
Then
$$\|z_x|_{F'}\|_\infty=\|(f^{-1}\tilde{x}-f^{-1}\tilde{y}-h_x)|_{F'}\|_\infty\le 2\varepsilon,$$
and
$$ \|z_x|_{F\setminus F'}\|_\infty=\|(f^{-1}\tilde{x}-f^{-1}\tilde{y}-\theta_x)|_{F\setminus F'}\|_\infty\le 2\varepsilon,$$
and
\begin{align*}
\|z_x|_{KF\setminus F}\|_\infty&=\|(f^{-1}\tilde{x}-f^{-1}\tilde{y}-w_x+w_y)|_{KF\setminus F}\|_\infty\\
&\le \|(f^{-1}\tilde{x}-w_x)|_{KF\setminus F}\|_\infty+\|(f^{-1}\tilde{y}-w_y)|_{KF\setminus F}\|_\infty\le 2\varepsilon,
\end{align*}
and thus
$$ \|z_x|_{KF}\|_\infty\le 2\varepsilon.$$
It follows that
\begin{eqnarray*}
\| \theta_x\|_2&\le& \|f^{-1}\tilde{x}\|_2+\|f^{-1}\tilde{y}\|_2+\|p_{F\setminus F'}(z_x)\|_2\\
&\le& 2(\|f\|+C_1)M\|f^{-1}\|\cdot |F|^{1/2}+(2\varepsilon)\delta^{1/2}|F|^{1/2}\\
&\le& D(\|f\|+C_1)M\|f^{-1}\|\cdot |F|^{1/2}.
\end{eqnarray*}

Note that $\theta_x+Dw_x\in \Zb[KF\setminus F']$ with $\|\theta_x+Dw_x\|_2\le 2D(\|f\|+C_1)M\|f^{-1}\|\cdot |F|^{1/2}$
and $|KF\setminus F'|\le 2\delta|F|=\delta'|F|$.
By
Lemma~\ref{ball:lemma} one has
\begin{eqnarray*}
|\{\theta_x+Dw_x: x\in W_v\}|&\le &C\lambda^{|F|/2}(2D(\|f\|+C_1)M\|f^{-1}\|)^{|KF\setminus F'|}\\
&\le &C\lambda^{|F|/2}(2D(\|f\|+C_1)\|f^{-1}\|)^{2\delta|F|}M^{|KF\setminus F'|}\\
&\le & C\lambda^{|F|}M^{|KF\setminus F'|}.
\end{eqnarray*}
Thus
we can find a subset $W'_v\subseteq  W_v$ with $C\lambda^{|F|}M^{|KF\setminus F'|}|W'_v|\ge |W_v|$ such that
$\theta_{x_1}+Dw_{x_1}=\theta_{x_2}+Dw_{x_2}$ for all $x_1, x_2\in W'_v$. Since $\theta_x\in \Rb[F\setminus F']$
and $w_x\in \Rb[KF\setminus F]$ for all $x\in W'_v$, we have $\theta_{x_1}=\theta_{x_2}$ and $w_{x_1}=w_{x_2}$ for
$x_1, x_2\in W'_v$.

Now it suffices to show that $|W'_v|\le 1$.
Suppose that $x_1\neq x_2$ in $W'_v$.
Applying (\ref{E-compare}) to $x=x_1$ and $x=x_2$ respectively, one gets
$$ f^{-1}\widetilde{x_1}-f^{-1}\widetilde{x_2}=h_{x_1}-h_{x_2}+z_{x_1}-z_{x_2}.$$
Write $z_{x_1}-z_{x_2}$ as $z_1+z_2$ such that the supports of $z_1$ and $z_2$
are contained in $KF$ and $\Gamma\setminus KF$ respectively. Note
that $p_F(f(z_{x_1}-z_{x_2}))=p_F(fz_1)$ and
$\|z_1\|_{\infty}\le 4\varepsilon$. Consequently,
$$\|p_F(f(z_{x_1}-z_{x_2}))\|_{\infty}=\|p_F(f z_1)\|_{\infty}\le \| f z_1\|_{\infty}\le \|f\|_1\cdot \|z_1\|_{\infty}\le 4\varepsilon \|f\|_1=1/2.$$
We have
\begin{eqnarray*}
 \widetilde{x_1}-\widetilde{x_2}
 &=&p_F(\widetilde{x_1}-\widetilde{x_2})=p_F(f(h_{x_1}-h_{x_2}))+p_F(f(z_{x_1}-z_{x_2}))\\
 &=& S'_F(h_{x_1}-h_{x_2})+p_F(f(z_{x_1}-z_{x_2})).
\end{eqnarray*}
Since $\widetilde{x_1}-\widetilde{x_2}$ and $S'_F(h_{x_1}-h_{x_2})$ are both in $\Zb[F]$, we
must have $p_F(f(z_{x_1}-z_{x_2}))=0$. Therefore $\widetilde{x_1}-\widetilde{x_2}=S'_F(h_{x_1}-h_{x_2})\in
S'_F\Zb[F]$, contradicting the assumption $x_1\neq x_2$. This finishes the proof of
the lemma.
\end{proof}

\begin{remark} \label{R-uniform bound}
Note that in Lemma~\ref{L-rational} the operator $S_F$ may fail to preserve $\Zb[F]$, while the norm $\|S'_F\|$
of the operator $S'_F$ defined in the 2nd paragraph of the proof of Lemma~\ref{L-rational}
may be large when $M$ gets large.
Indeed, this is what happens when we construct $S_n$ in the proof of Lemma~\ref{L-lower bound} below.
A modification of  the proofs of Lemmas~\ref{L-positive small}, \ref{L-positive large} and \ref{L-rational} shows
that if $f\in \Zb\Gamma$ is invertible in $\cL\Gamma$, and there are a (left) F{\o}lner net $\{F_n\}_{n\in J}$ of $\Gamma$ and
an invertible $S_n\in B(\Cb[F_n])$  preserving $\Zb[F_n]$ for each $n\in J$
such that $\sup_{n\in J}\max(\|S_n\|, \|S_n^{-1}\|)<\infty$ and $\lim_{n\to \infty}\rank(S_n-f_{F_n})/|F_n|=0$, then
$\rh(\alpha_f)=\lim_{n\to \infty}\frac{1}{|F_n|}\log |\det S_n|$. Combined with Corollary~\ref{C-determinant formula}, this proves
Theorem~\ref{T-main} for such case, without using Theorem~\ref{T-positive} and Corollary~\ref{C-product}. When $\Gamma$ is also residually finite, Weiss showed that there are a net $\{\Gamma_n\}_{n\in J}$ of
finite index normal subgroups of $\Gamma$
and a (left) F{\o}lner net  $\{F_n\}_{n\in J}$ of $\Gamma$  such that the quotient map $\Gamma\rightarrow \Gamma/\Gamma_n$ maps $F_n$ bijectively
to $\Gamma/\Gamma_n$ for each $n\in J$ \cite[Section 2]{Weiss01} (see also \cite[Corollary 5.6]{DSch}). Via taking $S_n$ to be
 the image of $f$ in $\Cb(\Gamma/\Gamma_n)$ and identifying $B(\ell^2(\Gamma/\Gamma_n))$ and $B(\Cb[F_n])$, it is easily checked that
when $\Gamma$ is residually finite,  $\{F_n\}_{n\in J}$  and $\{S_n\}_{n\in J}$ satisfying the above conditions do exist. However, we have not been able to show the existence of such $\{F_n\}_{n\in J}$  and $\{S_n\}_{n\in J}$ in general. This is why we have to use Theorem~\ref{T-approximate determinant formula}, Lemma~\ref{L-lower bound} and Ornstein and Weiss's theory of quasitiling.
\end{remark}

For $\varepsilon>0$, we say that a family of finite subsets $\{F_1, \dots, F_m\}$ of $\Gamma$ are {\it $\varepsilon$-disjoint}
if there are $F'_j\subseteq F_j$ for all $1\le j\le m$ such that $F'_1, \dots, F'_m$ are pairwise disjoint, and
$|F'_j|\ge (1-\varepsilon)|F_j|$ for all $1\le j\le m$.
We need the following theorem of Ornstein and Weiss:

\begin{theorem}\cite[page 24, Theorem 6]{OW} \label{T-quasitile}
Let $\varepsilon>0$ and let $K$ be a nonempty finite subset of $\Gamma$.  Then there exist $\delta>0$ and
nonempty finite subsets $K', F_1, \dots, F_m$ of $\Gamma$,  such
that
\begin{enumerate}
\item $|\{g\in F_j: Kg\subseteq F_j\}|\ge (1-\varepsilon)|F_j|$ for each $1\le j\le m$;

\item for any nonempty finite subset $F$ of $\Gamma$ satisfying $|K'F\setminus F|\le \delta|F|$, there are
finite subsets $D_1, \dots,  D_m$ of $\Gamma$ such that $\bigcup_{1\le j\le m}F_jD_j\subseteq F$, the family
$\{F_jc: 1\le j\le m, c\in D_j\}$ of subsets of $\Gamma$ is $\varepsilon$-disjoint, and $|\bigcup_{1\le j\le m}F_jD_j|\ge (1-\varepsilon)|F|$.
\end{enumerate}
\end{theorem}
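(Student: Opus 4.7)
The plan is to follow the original Ornstein--Weiss strategy: construct finite sets $F_1,\ldots,F_m$ of rapidly increasing invariance and then quasi-tile any sufficiently invariant $F$ by a reverse-greedy Vitali procedure.

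First I would fix an integer $m$ with $(1-\eta)^m<\varepsilon$, where $\eta=\eta(\varepsilon)\in(0,1)$ is the packing constant appearing in the Vitali estimate below. Using amenability, I would inductively choose finite sets $F_j$ with each $F_j$ being $(K\cup F_1\cup\cdots\cup F_{j-1},\varepsilon_j)$-invariant for a rapidly decreasing sequence $\varepsilon_j>0$. Then each $F_j$ is in particular $(K,\varepsilon)$-invariant, giving (1), and every later $F_i$ is essentially unaffected by translations coming from earlier $F_j$'s.

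For (2), set $K'$ to be a slight enlargement of $F_m$ (large enough to dominate all boundary enlargements appearing in the construction) and take $\delta>0$ small. Given $F$ with $|K'F\setminus F|\le\delta|F|$, proceed by reverse-greedy packing: let $R_{m+1}=F$ and for $j=m,m-1,\ldots,1$ choose $D_j$ maximal subject to (a) $F_jc\subseteq F$ for each $c\in D_j$, (b) $|F_jc\cap R_{j+1}|\ge(1-\varepsilon/2)|F_j|$, and (c) the extended collection $\{F_jc:c\in D_j\}\cup\bigcup_{i>j}\{F_ic':c'\in D_i\}$ is $\varepsilon$-disjoint. Update $R_j=R_{j+1}\setminus\bigcup_{c\in D_j}F_jc$. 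The key Vitali-type estimate is that whenever $|R_{j+1}|\ge\varepsilon|F|$ one has $|R_j|\le(1-\eta)|R_{j+1}|$: $\varepsilon$-disjointness bounds overlap multiplicity by $O(1/\varepsilon)$, while the invariance of $F$ under $F_m$ and of $F_j$ under the $F_i$ with $i>j$ ensures that most centers $c\in R_{j+1}$ are eligible (i.e. their $F_j$-translate lies in $F$ and meets $R_{j+1}$ in nearly its entirety). Iterating the estimate $m$ times gives $|R_1|<\varepsilon|F|$, which is the required $(1-\varepsilon)$-covering.

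The main obstacle is the quantitative bookkeeping. At stage $j$, the already-placed union $\bigcup_{i>j}F_iD_i$ has a boundary whose size is controlled by the invariance defects of the $F_i$'s for $i>j$; for the Vitali step at level $j$ to succeed, $F_j$ must be much more invariant than that accumulated boundary, so that after removing a small boundary layer most of $R_{j+1}$ still admits valid centers. Choosing the sequence $\varepsilon_j$ to decay geometrically makes these defects summable and leaves genuine room for the next layer; choosing $\delta\ll\min_j\varepsilon_j$ ensures $F$ itself is invariant enough that the recursion starts with $R_{m+1}$ essentially equal to $F$. Making the constants explicit and verifying the Vitali covering bound with the required $\eta$ independent of $j$ is the bulk of the technical work.
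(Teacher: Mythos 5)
Theorem~\ref{T-quasitile} is not proved in the paper at all: it is quoted verbatim from Ornstein and Weiss \cite{OW}, so there is no in-paper argument to measure your proposal against. What you have written is an outline of the standard Ornstein--Weiss quasitiling proof, and its architecture is sound: the quantifier order is respected (the $F_j$'s and $K'$ are chosen from $\varepsilon$ and $K$ alone, $\delta$ last), the tower is built so that each new $F_j$ is nearly invariant under $K$ and under all previously chosen sets, and the top-down maximal $\varepsilon$-disjoint packing with a per-stage Vitali estimate (remainder shrinks by a factor $1-\eta$, $\eta$ of order $\varepsilon$, as long as it exceeds $\varepsilon|F|$) is exactly how the theorem is proved. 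Two points need tightening. First, the invariance actually used at stage $j$ is that of the \emph{later} tiles $F_i$, $i>j$, under the earlier $F_j$ (so that deleting their translates barely damages the $F_j$-invariance of the remainder $R_{j+1}$); your construction paragraph has this in the correct direction, but the sentence justifying eligibility of centers states it the other way around ("invariance of $F_j$ under the $F_i$ with $i>j$"), which is the direction one cannot arrange. Second, $K'$ must control the $F_j$-boundary of $F$ for \emph{every} $j$, not only for $F_m$, since the $F_j$ need not be nested; the safe choice is $K'\supseteq\{e_\Gamma\}\cup\bigcup_{j}F_j$, and then $\delta$ is taken small relative to $\varepsilon/\max_j|F_j|$ to absorb the factor $|F_j|$ arising in the eligibility count $|\{c\in F: F_jc\not\subseteq F\}|\le|F_j|\,|F_jF\setminus F|$. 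Finally, be aware that what you defer -- the per-stage covering bound via maximality and double counting, and the bookkeeping showing the accumulated tile boundaries stay small compared with $\varepsilon|F|$ -- is the entire content of the theorem; as submitted this is a correct plan rather than a complete proof, and a full write-up would essentially reproduce \cite[page 24, Theorem 6]{OW}.
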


\begin{remark} \label{R-quasitile}
In Theorem~\ref{T-quasitile}, choosing $F_{c, j}\subseteq F_j$ for every $1\le j\le m$ and $c\in D_j$
such that $|F_{c, j}|\ge (1-\varepsilon)|F_j|$ for all $1\le j\le m$ and $c\in D_j$, and that
the family $\{F_{c,j}c: 1\le j\le m, c\in D_j\}$ of subsets of $\Gamma$ is pairwise disjoint,
and noticing that $F_{c, j}$ is one element in the finite set $\{W\subseteq F_j: |W|\ge (1-\varepsilon)|F_j|\}$, we see
that we can actually require the family $\{F_jc: 1\le j\le m, c\in D_j\}$ to be pairwise disjoint.
\end{remark}

\begin{lemma} \label{L-lower bound}
Let $\Gamma$ be an infinite amenable group and let $f\in \Zb\Gamma$ be invertible in $\cL\Gamma$. For any (left) F{\o}lner net $\{F_n\}_{n\in J}$ of $\Gamma$, one
has
$$\liminf_{n\to \infty}\frac{1}{|F_n|}\log r_{F_n, \infty}(\frac{1}{8\|f\|_1})\ge \log \det f.$$
\end{lemma}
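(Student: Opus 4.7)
The plan is to derive the lower bound from Lemma~\ref{L-rational} by constructing, for every sufficiently invariant $F$ in the F{\o}lner net, a subset $F'\subseteq F$ with $|F\setminus F'|/|F|$ small together with a linear map $T_F\colon\Cb[F\setminus F']\to\Cb[F]$ of uniformly bounded norm and with $MT_F(\Zb[F\setminus F'])\subseteq\Zb[F]$ for a uniform $M$, such that the operator $S_F$ defined to be $f_F$ on $\Cb[F']$ and $T_F$ on $\Cb[F\setminus F']$ is invertible with uniformly bounded $\|S_F\|, \|S_F^{-1}\|$ and satisfies $\tfrac{1}{|F|}\log|\det S_F|\to\log\ddet_{\cL\Gamma}f$ as $F$ becomes more invariant. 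Plugging such $S_F$ into Lemma~\ref{L-rational} yields
$$
C\lambda^{|F|}M^{|K_fF\setminus F|}r_{F,\infty}\!\left(\tfrac{1}{8\|f\|_1}\right)\ge|\det S_F|,
$$
and then taking logarithms, dividing by $|F|$, using $|K_fF\setminus F|/|F|\to0$, taking $\liminf$ over the net, and finally letting $\lambda\searrow1$ yields the claim.

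To construct $S_F$ I would use the Ornstein-Weiss quasitiling of Theorem~\ref{T-quasitile}. Fix $\varepsilon>0$ (to be sent to $0$ at the end). Applying Lemma~\ref{L-Deninger} to $f^*f$, choose finite subsets $F_1,\dots,F_m$ of $\Gamma$ that are simultaneously $(K_f,\varepsilon)$-invariant (so each interior $F_j^\circ:=\{g\in F_j:K_fg\subseteq F_j\}$ has $|F_j^\circ|\ge(1-\varepsilon)|F_j|$) and invariant enough that $\tfrac{1}{|F_j|}\log|\det(f^*f)_{F_j}|$ is within $\varepsilon$ of $2\log\ddet_{\cL\Gamma}f$. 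On each $F_j$ build, once and for all, an invertible operator $S_j\in B(\Cb[F_j])$ that agrees with $f_{F_j}$ on $\Cb[F_j^\circ]$, has entries in $\tfrac{1}{M}\Zb$ for a uniform $M$, has uniformly bounded $\|S_j\|, \|S_j^{-1}\|$, and satisfies $|\det S_j|$ close to $\ddet_{\cL\Gamma}(f)^{|F_j|}$. Then apply Theorem~\ref{T-quasitile} together with Remark~\ref{R-quasitile} to any sufficiently invariant $F$ in the net to obtain pairwise disjoint translates $\{F_jc:1\le j\le m,\ c\in D_j\}$ inside $F$ covering at least $(1-\varepsilon)|F|$ points, and define $S_F$ to act block-diagonally by the appropriate translate of $S_j$ on each $\Cb[F_jc]$ and by the identity on the leftover subspace $\Cb[F\setminus\bigsqcup_{j,c}F_jc]$, with $F':=\bigsqcup_{j,c}F_j^\circ c$. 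The inclusion $K_f(F_j^\circ c)\subseteq F_jc\subseteq F$ ensures $S_F=f_F$ on $\Cb[F']$, the quasitiling bounds give $|F\setminus F'|\le 2\varepsilon|F|$, and $|\det S_F|=\prod_{j,c}|\det S_j|$ yields the required determinant estimate after $\varepsilon\to0$.

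The main obstacle is the local construction of the $S_j$. Invertibility of $(f^*f)_{F_j}$ forces $f\iota_{F_j^\circ}\colon\Cb[F_j^\circ]\to\Cb[F_j]$ to be injective with left inverse of norm $\le\|f^{-1}\|$, so the natural candidate is to set $S_j=f_{F_j}$ on $\Cb[F_j^\circ]$ and to send the standard basis of $\Cb[F_j\setminus F_j^\circ]$ to a carefully chosen family in $\tfrac{1}{M}\Zb[F_j]$ that together with $f(\Cb[F_j^\circ])$ spans $\Cb[F_j]$. The delicate task is producing such vectors with uniform denominator $M$ and uniformly bounded norm while keeping $\|S_j^{-1}\|$ bounded and $|\det S_j|$ close to $\ddet_{\cL\Gamma}(f)^{|F_j|}$. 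Once $S_j$ is built with $\rank(S_j-f_{F_j})\le|F_j\setminus F_j^\circ|\le\varepsilon|F_j|$ and uniformly controlled operator norms, the determinant estimate is exactly what Theorem~\ref{T-approximate determinant formula} delivers, so this Siegel-type construction of short lattice vectors orthogonal (in a weak sense) to $f(\Cb[F_j^\circ])$ is the technical core of the argument.
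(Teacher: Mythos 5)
Your overall strategy is the same as the paper's: quasitile $F_n$ by Theorem~\ref{T-quasitile} and Remark~\ref{R-quasitile}, glue block operators on the tiles to get $S_n$ agreeing with $f_{F_n}$ on a large subset $F'_n$, feed $S_n$ into Lemma~\ref{L-rational}, and control $\frac{1}{|F_n|}\log|\det S_n|$ by the determinant approximation. However, the step you yourself single out as ``the technical core'' --- the construction of the per-tile maps with a uniform denominator, uniformly bounded norms, bounded inverses, and $|\det S_j|$ close to $\ddet_{\cL\Gamma}(f)^{|F_j|}$ --- is left entirely unproven, so as it stands the argument has a genuine gap. Moreover you have made that local problem look harder than it is by imposing a requirement that is both unnecessary and awkward to justify: you want per-tile determinant closeness, obtained by applying Lemma~\ref{L-Deninger} (for $f^*f$) or Theorem~\ref{T-approximate determinant formula} to individual tiles, but both of those statements are formulated for F{\o}lner nets, not for a single sufficiently invariant finite set, so using them tile-by-tile would require reproving them in a quantified ``for all $(K,\delta)$-invariant $F$'' form.

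The paper's proof shows how to close the gap cheaply and why the per-tile determinant control can be dropped. On each tile $W_j$ with interior $W'_j=\{g\in W_j: K_fg\subseteq W_j\}$ one has $f_{W_j}=f$ on $\Cb[W'_j]$, and $\Cb[W_j]=f\Cb[W'_j]\oplus (f\Cb[W'_j])^{\perp}$; since $f$ has integer coefficients, the orthogonal complement is cut out by rational linear equations, hence is spanned by its rational points, so one may take an orthonormal real basis of it and perturb to nearby rational vectors, obtaining $\widetilde T_j:\Cb[W_j\setminus W'_j]\to (f\Cb[W'_j])^{\perp}$ with $\max(\|\widetilde T_j\|,\|\widetilde T_j^{-1}\|)\le 2$ and some denominator $M_j$; uniformity of $M=\prod_j M_j$ across all of $F_n$ is automatic because only the finitely many tile shapes $W_1,\dots,W_m$ (fixed once $\delta$ is fixed) are used, translated. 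The glued $S_n$ then satisfies $\|S_n\|\le\|f\|+2$, $\|S_n^{-1}\|\le\|f^{-1}\|+2$ and $\rank(S_n-f_{F_n})\le|F_n\setminus F'_n|\le\delta|F_n|$, and Theorem~\ref{T-approximate determinant formula} applied \emph{globally along the net} $\{F_n\}$ already gives $\limsup_n\bigl|\log\ddet_{\cL\Gamma}f-\frac{1}{|F_n|}\log|\det S_n|\bigr|<\varepsilon$; no estimate on any individual $|\det S_j|$, and no product formula over tiles, is needed. So your plan becomes a correct proof once you carry out this (easy) rational-complement construction and discard the per-tile determinant requirement in favor of the global application of Theorem~\ref{T-approximate determinant formula}.
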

\begin{proof} Set $C_1=\max(\|f\|, \|f^{-1}\|)+2$. Let $\lambda >1$ and $\varepsilon>0$.
Take $\delta>0$ working for both Theorem~\ref{T-approximate determinant formula} and Lemma~\ref{L-rational}.
Denote  by $K$ the union of the supports of $f$ and $f^*$, and the identity of $\Gamma$.

By Theorem~\ref{T-quasitile} and Remark~\ref{R-quasitile}, there exist
nonempty finite subsets $W_1, \dots, W_m$ of $\Gamma$ and $N\in J$,  such
that
\begin{enumerate}
\item[(I)] $|W'_j|\ge (1-\frac{\delta}{2})|W_j|$ for each $1\le j\le m$, where $W'_j=\{g\in W_j: Kg\subseteq W_j\}$;

\item[(II)] for any $n\ge N$, there are
finite subsets $D_{n,1}, \dots,  D_{n, m}$ of $\Gamma$ such that $\bigcup_{1\le j\le m}W_jD_{n, j}\subseteq F_n$, the family
$\{W_jc: 1\le j\le m, c\in D_{n,j}\}$ of subsets of $\Gamma$ is pairwise disjoint, and $|\bigcup_{1\le j\le m}W_jD_{n, j}|\ge (1-\frac{\delta}{2})|F_n|$.
\end{enumerate}
We may also assume that
\begin{enumerate}
\item[(III)] for any $n\ge N$, one has $|KF_n\setminus F_n|\le \delta |F_n|$.
\end{enumerate}

We shall construct $T_n$ for each $n\ge N$ satisfying the hypothesis in Lemma~\ref{L-rational}
for some $M$ not depending on $n$ such that the associated $S_n$ satisfies the hypothesis in Theorem~\ref{T-approximate determinant formula}.
For this purpose, we shall construct $T_n$ on $\Cb[W_j]$ first, then transfer them to $\Cb[F_n]$.

Fix $1\le j\le m$. Since $KW'_j\subseteq W_j$, we have $f_{W_j}=f$ on $\Cb[W'_j]$. Write $(f\Cb[W'_j])^{\perp}$ for the orthogonal complement of $f\Cb[W'_j]$ in $\Cb[W_j]$. Note that the dimension of $(f\Cb[W'_j])^{\perp}$ is equal to $|W_j\setminus W'_j|$, and
that $(f\Cb[W'_j])^{\perp}$ is the linear span of $(f\Cb[W'_j])^{\perp}\cap \Qb[W_j]$.
Identify $W_j$ with the standard orthonormal basis of $\Cb[W_j]$.
Take an orthonormal basis $\{e_g:g\in W_j\setminus W'_j\}$
of $(f\Cb[W'_j])^{\perp}$,  consisting of elements in $\Rb[W_j]$. Taking $e'_g\in (f\Cb[W'_j])^{\perp}\cap \Qb[W_j]$ close enough to
$e_g$ for all $g\in W_j\setminus W'_j$, we find that the linear map $\widetilde{T}_j: \Cb[W_j\setminus W'_j]\rightarrow (f\Cb[W'_j])^{\perp}$ sending $g$ to $e'_g$ is bijective and $\max(\|\widetilde{T}_j\|, \|\widetilde{T}_j^{-1}\|)\le 2$. Then there exists $M_j\in \Nb$ such that $M_j\widetilde{T}_j(\Zb[W_j\setminus W'_j])\subseteq \Zb[W_j]$. Note that the linear map $\widetilde{S}_j: \Cb[W_j]\rightarrow \Cb[W_j]$
defined as $f_{W_j}$ on $\Cb[W'_j]$ and $\widetilde{T}_j$ on $\Cb[W_j\setminus W'_j]$ is invertible,
and $\|\widetilde{S}_j^{-1}\|\le \|f^{-1}\|+2$.

Set $M=\prod_{1\le j\le m}M_j$.

Now let $n\ge N$. Let $D_{n, 1}, \dots, D_{n, m}$ be as in (II) above. Set $F'_n=\bigcup_{1\le j\le m}W'_jD_{n, j}$. Then $|F_n\setminus F'_n|\le \delta|F_n|$. Next we define the desired  linear map $T_n: \Cb[F_n\setminus F'_n]\rightarrow \Cb[F_n]$. On $\Cb[F_n\setminus (\bigcup_{1\le j\le m}W_jD_{n, j})]$, the map $T_n$ is the identity map. On $\Cb[(W_j\setminus W'_j)c]$ for $1\le j\le m$ and $c\in D_{n, j}$, the map $T_n$ is the same
as $\widetilde{T}_j$ on $\Cb[W_j\setminus W'_j]$, if we identify $\Cb[W_j\setminus W'_j]$ and $\Cb[W_j]$ with $\Cb[(W_j\setminus W'_j)c]$
and $\Cb[W_jc]$ respectively via the right multiplication by $c$. Then $MT_n(\Zb[F_n\setminus F'_n])\subseteq \Zb[F_n]$, and $\|T_n\|\le 2$.
Denote by $S_n$ the linear map
$\Cb[F_n]\rightarrow \Cb[F_n]$ which is equal to $f_{F_n}$ on $\Cb[F'_n]$ and equal to $T_n$ on $\Cb[F_n\setminus F'_n]$.
Clearly $\|S_n\|\le \|f\|+2$.
 Note that the restriction of $S_n$ on $\Cb[W_jc]$ for each $1\le j\le m$ and $c\in D_{n, j}$, or on $\Cb[F_n\setminus  (\bigcup_{1\le j\le m}W_jD_{n, j})]$ is an isomorphism, and the norm of the inverse of this restriction is bounded above by $\|f^{-1}\|+2$. Thus $S_n$ is invertible with $\|S_n^{-1}\|\le \|f^{-1}\|+2$. By Lemma~\ref{L-rational} we have
\begin{eqnarray*}
C\lambda^{|F_n|}M^{|KF_n\setminus F_n|}r_{F_n, \infty}(\frac{1}{8\|f\|_1})\ge |\det S_n|,
\end{eqnarray*}
where $C$ is the universal constant in Lemma~\ref{ball:lemma}. Therefore
\begin{eqnarray} \label{E-lower bound}
\liminf_{n\to \infty}(\frac{1}{|F_n|}\log r_{F_n, \infty}(\frac{1}{8\|f\|_1})-\frac{1}{|F_n|}\log |\det S_n|)\ge -\log \lambda.
\end{eqnarray}

 Since $S_n$ and $f_{F_n}$ coincide on $\Cb[F'_n]$, we have $\rank(S_n-f_{F_n})\le |F_n\setminus F'_n|\le \delta |F_n|$.
By Theorem~\ref{T-approximate determinant formula} we get
\begin{eqnarray} \label{E-approximate}
\limsup_{n\to \infty}\big|\log \ddet_{\cL\Gamma} f-\frac{1}{|F_n|}\log |\det S_n|\big|<\varepsilon.
\end{eqnarray}
Combining (\ref{E-lower bound}) and (\ref{E-approximate}) we get
$$ \liminf_{n\to \infty}(\frac{1}{|F_n|}\log r_{F_n, \infty}(\frac{1}{8\|f\|_1})-\log \ddet_{\cL\Gamma} f)\ge -\log \lambda-\varepsilon.$$
Since $\lambda>1$ and $\varepsilon>0$ are arbitrary, the lemma is proved.
\end{proof}

\section{Proof of Theorem~\ref{T-main} and Consequences} \label{S-consequence}

We are ready to prove Theorem~\ref{T-main}.

\begin{proof}[Proof of Theorem~\ref{T-main}]
By Theorem~\ref{T-finite} we may assume that $\Gamma$ is infinite.
Let $\{F_n\}_{n\in \Nb}$ be a (left) F{\o}lner sequence of $\Gamma$.
By Theorem~\ref{T-sep} and Lemma~\ref{L-lower bound} we have
$$ \rh(\alpha_f)\ge \liminf_{n\to \infty}\frac{1}{|F_n|}\log r_{F_n, \infty}(\frac{1}{8\|f\|_1})\ge \log \ddet_{\cL\Gamma} f.$$
Applying the inequality to $f^*$, we also have
$$ \rh(\alpha_{f^*})\ge \log \ddet_{\cL\Gamma} f^*.$$

Then we have
\begin{eqnarray*}
 \rh(\alpha_{f^*f})&=&\rh(\alpha_{f^*})+\rh(\alpha_f)\ge \log \ddet_{\cL\Gamma} f^*+\log \ddet_{\cL\Gamma} f=2\log \ddet_{\cL\Gamma} f\\
 &=&\log \ddet_{\cL\Gamma} (f^*f)=\rh(\alpha_{f^*f}),
\end{eqnarray*}
where the first equality comes from Corollary~\ref{C-product}, the second one comes from Theorem~\ref{T-FK}, the third one comes from the definition of $\ddet_{\cL\Gamma}f$,
 and the last one comes from Theorem~\ref{T-positive}.
Thus $ \rh(\alpha_f)=\log \ddet_{\cL\Gamma} f$.
\end{proof}

Since $s_{F, \infty}(\varepsilon)\ge r_{F, \infty}(\varepsilon)$ for any nonempty finite subset $F$ of $\Gamma$ and $\varepsilon>0$,
in the proof of Theorem~\ref{T-main} we actually have proved the following result.

\begin{corollary} \label{C-entropy formula}
Let $\Gamma$ be a countable amenable group and let $f\in \Zb\Gamma$ be invertible in $\cL\Gamma$. For any $\frac{1}{8\|f\|_1}\ge \varepsilon>0$ and any (left) F{\o}lner sequence $\{F_n\}_{n\in \Nb}$ of $\Gamma$, one
has
$$\rh(\alpha_f)=\lim_{n\to \infty}\frac{1}{|F_n|}\log r_{F_n, \infty}(\varepsilon)=\lim_{n\to \infty}\frac{1}{|F_n|}\log s_{F_n, \infty}(\varepsilon).$$
\end{corollary}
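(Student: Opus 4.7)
The plan is to deduce this corollary by sandwiching $r_{F_n,\infty}(\varepsilon)$ and $s_{F_n,\infty}(\varepsilon)$ between bounds already at our disposal: an upper bound coming from Theorem~\ref{T-sep} and a lower bound coming from Lemma~\ref{L-lower bound} combined with Theorem~\ref{T-main}. The explicit threshold $\frac{1}{8\|f\|_1}$ appears only to match the hypothesis of Lemma~\ref{L-lower bound}; monotonicity in $\varepsilon$ then propagates the bound to every smaller $\varepsilon$.

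First I would record two elementary facts. Both $\varepsilon\mapsto s_{F,\infty}(\varepsilon)$ and $\varepsilon\mapsto r_{F,\infty}(\varepsilon)$ are non-increasing, and one always has $r_{F,\infty}(\varepsilon)\le s_{F,\infty}(\varepsilon)$ since a maximal $[\vartheta,F,\infty,\varepsilon]$-separated set automatically $[\vartheta,F,\infty,\varepsilon]$-spans $X_f$. Granting these, Theorem~\ref{T-sep} with $p=\infty$ together with the $s$-monotonicity gives, for every $\varepsilon>0$,
\[
\limsup_{n\to\infty}\frac{1}{|F_n|}\log s_{F_n,\infty}(\varepsilon)\le \rh(\alpha_f),
\]
while Lemma~\ref{L-lower bound} and Theorem~\ref{T-main} yield
\[
\liminf_{n\to\infty}\frac{1}{|F_n|}\log r_{F_n,\infty}\!\bigl(\tfrac{1}{8\|f\|_1}\bigr)\ge \log\ddet_{\cL\Gamma}f=\rh(\alpha_f),
\]
which persists, by $r$-monotonicity, for every $\varepsilon\in(0,\tfrac{1}{8\|f\|_1}]$.

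Using $r\le s$, the chain
\[
\rh(\alpha_f)\le \liminf_{n\to\infty}\tfrac{1}{|F_n|}\log r_{F_n,\infty}(\varepsilon)\le \limsup_{n\to\infty}\tfrac{1}{|F_n|}\log r_{F_n,\infty}(\varepsilon)\le \limsup_{n\to\infty}\tfrac{1}{|F_n|}\log s_{F_n,\infty}(\varepsilon)\le \rh(\alpha_f)
\]
collapses to a string of equalities, and the parallel observation $\liminf_{n\to\infty}\tfrac{1}{|F_n|}\log s_{F_n,\infty}(\varepsilon)\ge \liminf_{n\to\infty}\tfrac{1}{|F_n|}\log r_{F_n,\infty}(\varepsilon)=\rh(\alpha_f)$ pins the $s$-limit as well. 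Both limits therefore exist and equal $\rh(\alpha_f)$.

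There is no genuine obstacle: this is a clean repackaging of statements already proved, and the only role of the numerical constant $\frac{1}{8\|f\|_1}$ is to import Lemma~\ref{L-lower bound} verbatim. For finite $\Gamma$ (where the Følner sequence stabilizes at $\Gamma$) the same conclusion holds directly from Theorem~\ref{T-finite}: any nonzero $\tilde z\in\Rb[\Gamma]$ with $f\tilde z\in\Zb[\Gamma]$ must satisfy $f\tilde z\ne 0$ (by injectivity of left-multiplication by $f$ on $\Cb\Gamma$) and hence $\|\tilde z\|_\infty\ge \|f\|_1^{-1}>\varepsilon$, so the minimum $d_{\Gamma,\infty}$-distance between distinct points of $X_f$ exceeds $\varepsilon$, forcing $s_{\Gamma,\infty}(\varepsilon)=r_{\Gamma,\infty}(\varepsilon)=|X_f|$.
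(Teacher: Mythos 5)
Your proposal is correct and is essentially the paper's own argument: the paper derives the corollary by remarking that $r_{F,\infty}(\varepsilon)\le s_{F,\infty}(\varepsilon)$ and that the proof of Theorem~\ref{T-main} already established $\rh(\alpha_f)\ge\liminf_n\frac{1}{|F_n|}\log r_{F_n,\infty}(\frac{1}{8\|f\|_1})\ge\log\ddet_{\cL\Gamma}f=\rh(\alpha_f)$, exactly the sandwich you spell out via Theorem~\ref{T-sep}, Lemma~\ref{L-lower bound} and monotonicity in $\varepsilon$. Your separate treatment of finite $\Gamma$ (where Lemma~\ref{L-lower bound} does not apply) is a harmless and correct addition that the paper leaves implicit.
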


The follow result is a consequence of Theorems~\ref{T-main} and \ref{T-FK}.

\begin{corollary} \label{C-adjoint}
Let $\Gamma$ be a countable amenable group and let  $f\in \Zb\Gamma$ be invertible in $\cL\Gamma$. Then $\rh(\alpha_f)=\rh(\alpha_{f^*})$.
\end{corollary}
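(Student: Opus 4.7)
The plan is to chain together Theorem~\ref{T-main} and Theorem~\ref{T-FK}(1). First, I would observe that $f^*$ is invertible in $\cL\Gamma$ whenever $f$ is: the inverse is $(f^{-1})^*$, since the map $a\mapsto a^*$ is an antiautomorphism of $\cL\Gamma$ and it sends $\Zb\Gamma$ to itself (as $(f^*)_\gamma = f_{\gamma^{-1}}$ takes integer values). Hence Theorem~\ref{T-main} applies to both $f$ and $f^*$, giving
\begin{equation*}
\rh(\alpha_f) = \log \ddet_{\cL\Gamma} f, \qquad \rh(\alpha_{f^*}) = \log \ddet_{\cL\Gamma} f^*.
\end{equation*}

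Next, Theorem~\ref{T-FK}(1) asserts $\ddet_{\cL\Gamma}(a) = \ddet_{\cL\Gamma}(a^*)$ for any invertible $a$ in a unital $C^*$-algebra. Applying this with $a = f$ yields $\ddet_{\cL\Gamma} f = \ddet_{\cL\Gamma} f^*$, so the two entropy expressions coincide and $\rh(\alpha_f)=\rh(\alpha_{f^*})$.

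There is essentially no obstacle here: the whole work went into proving Theorem~\ref{T-main}, and once we have it together with the standard determinantal identity $\ddet(a)=\ddet(a^*)$, the corollary is immediate. The only small point worth stating in the write-up is the remark made in the introduction that without invoking Theorem~\ref{T-main} there is \emph{a priori} no obvious relation between $\alpha_f$ and $\alpha_{f^*}$ (they are built from entirely different left ideals in $\Zb\Gamma$), which is what makes the equality nontrivial as a dynamical statement even though its proof is a one-liner given the machinery.
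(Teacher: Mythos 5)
Your proposal is correct and is exactly the argument the paper intends: Corollary~\ref{C-adjoint} is stated there precisely as a consequence of Theorem~\ref{T-main} applied to both $f$ and $f^*$ (noting $f^*\in\Zb\Gamma$ is invertible in $\cL\Gamma$ with inverse $(f^{-1})^*$) combined with Theorem~\ref{T-FK}(1). Nothing is missing.
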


The following result is a generalization of \cite[Corollary 6.6]{DSch}, whose proof we follow.

\begin{corollary} \label{C-monotone}
Let $\Gamma$ be a countable amenable group and let $f, g\in \Zb\Gamma$ be invertible in $\cL\Gamma$ with $0\le f\le g$. Then $\rh(\alpha_f)\le \rh(\alpha_g)$. Furthermore, $ \rh(\alpha_f)=\rh(\alpha_g)$
if and only if $f=g$.
\end{corollary}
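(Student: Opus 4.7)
The plan is to combine Theorem~\ref{T-main} with the standard properties of the Fuglede--Kadison determinant recorded in Theorem~\ref{T-FK}, together with the faithfulness of $\tr_{\cL\Gamma}$ noted in Section~\ref{SS-determinant and algebra}. The inequality part is immediate: by Theorem~\ref{T-main}, $\rh(\alpha_f)=\log\ddet_{\cL\Gamma}f$ and similarly for $g$, so $\rh(\alpha_f)\le\rh(\alpha_g)$ is exactly the monotonicity statement in Theorem~\ref{T-FK}(2). All the work is in characterizing when equality holds.

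For the equality case, the idea is to conjugate $0\le f\le g$ by $f^{-1/2}$, which exists in $\cL\Gamma$ by continuous functional calculus since $f$ is positive and invertible. Conjugation yields $1\le f^{-1/2}gf^{-1/2}$ in $\cL\Gamma$, so the element $h:=\log(f^{-1/2}gf^{-1/2})$ is positive in $\cL\Gamma$. Moreover $h=0$ if and only if $f^{-1/2}gf^{-1/2}=1$, if and only if $g=f$, since $\log$ is injective on the positive invertible part of the spectrum.

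Next I would apply $\tr_{\cL\Gamma}$ to $h$. By the definition of the Fuglede--Kadison determinant for positive invertible elements, together with the multiplicativity $\ddet_{\cL\Gamma}(ab)=\ddet_{\cL\Gamma}(a)\ddet_{\cL\Gamma}(b)$ for invertible elements (proved in the original Fuglede--Kadison paper \cite{FK}),
\begin{align*}
\tr_{\cL\Gamma}(h) &= \log\ddet_{\cL\Gamma}(f^{-1/2}gf^{-1/2}) = \log\ddet_{\cL\Gamma}(g)-\log\ddet_{\cL\Gamma}(f) \\
&= \rh(\alpha_g)-\rh(\alpha_f),
\end{align*}
where the last equality uses Theorem~\ref{T-main}. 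Thus if $\rh(\alpha_f)=\rh(\alpha_g)$, then $\tr_{\cL\Gamma}(h)=0$; since $h\ge 0$ and $\tr_{\cL\Gamma}$ is faithful on positive elements, this forces $h=0$, and hence $f=g$ by the previous paragraph. The converse implication is trivial.

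I do not anticipate any substantive obstacle: once Theorem~\ref{T-main} is in hand, the argument reduces to standard facts about the group von Neumann algebra (continuous functional calculus, faithfulness of $\tr_{\cL\Gamma}$, and multiplicativity of the Fuglede--Kadison determinant on invertible elements). The only point worth verifying carefully is that conjugation by $f^{-1/2}$ converts the order-inequality into $f^{-1/2}gf^{-1/2}\ge 1$, which is immediate from the $C^*$-algebraic fact that $a\mapsto b^*ab$ preserves the positive cone.
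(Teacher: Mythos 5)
Your proof is correct, but it takes a different route from the paper at the key step. The paper sets $h=\log g-\log f$ and invokes L\"{o}wner's theorem that $\log$ is \emph{operator monotone} (so $0\le f\le g$ with $f$ invertible gives $\log f\le \log g$, hence $h\ge 0$); it then computes $\tr_{\cL\Gamma}(h)=\log\ddet_{\cL\Gamma}g-\log\ddet_{\cL\Gamma}f=\rh(\alpha_g)-\rh(\alpha_f)=0$ directly from the definition of the determinant and Theorem~\ref{T-main}, and concludes $h=0$ by faithfulness, whence $f=g$. You instead conjugate by $f^{-1/2}$ to get $f^{-1/2}gf^{-1/2}\ge 1$, so that positivity of $h=\log(f^{-1/2}gf^{-1/2})$ follows from ordinary functional calculus rather than operator monotonicity; the price is that your trace computation needs the multiplicativity of the Fuglede--Kadison determinant on invertible elements (together with $\ddet_{\cL\Gamma}(f^{-1/2})=(\ddet_{\cL\Gamma}f)^{-1/2}$), a genuine fact from \cite{FK} but one not recorded in the paper's Theorem~\ref{T-FK}, so you are importing an extra standard ingredient rather than staying within the properties the paper has set up. Both arguments then finish identically via faithfulness of $\tr_{\cL\Gamma}$, and your handling of the equivalence $h=0\Leftrightarrow f^{-1/2}gf^{-1/2}=1\Leftrightarrow f=g$ and of the inequality part (Theorem~\ref{T-main} plus Theorem~\ref{T-FK}(2)) matches the paper. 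In short: your version trades L\"{o}wner's operator monotonicity for determinant multiplicativity; the paper's version keeps the determinant theory minimal at the cost of citing the operator-monotonicity of $\log$.
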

\begin{proof} The inequality follows from Theorems~\ref{T-main} and \ref{T-FK}. Suppose that $\rh(\alpha_f)=\rh(\alpha_g)$.
Set $h=\log g-\log f$. Then
$$\tr_{\cL\Gamma}(h)=\tr_{\cL\Gamma} \log g-\tr_{\cL\Gamma} \log f=\log \ddet_{\cL\Gamma}g-\log \ddet_{\cL\Gamma}f=\rh(\alpha_g)-\rh(\alpha_f)=0.$$
Note that the function
$\log$ is {\it operator monotone}
in the sense that for any invertible bounded positive operators $T, S$ on a Hilbert space $H$ with $T\le S$,  one has $\log T\le \log S$ \cite{Lowner} \cite[Page 10]{Ped}. Thus $h\ge 0$. Since $\tr_{\cL\Gamma}$ is faithful and $\tr_{\cL\Gamma}h=0$, we get $h=0$. Thus $f=g$.
\end{proof}

\appendix

\section{Comparison of Invertibility in $\ell^1(\Gamma)$ and $\cL\Gamma$}  \label{A-invertibility}

A Banach complex algebra $A$ with an operation $*$ is called a {\it Banach $*$-algebra} if
$(a^*)^*=a$, $(\lambda a+b)^*=\bar{\lambda}a^*+b^*$, $(ab)^*=b^*a^*$, and $\|a^*\|=\|a\|$ for all
$a, b\in A$ and $\lambda \in \Cb$. A {\it representation} of a Banach $*$-algebra $A$ on a Hilbert
space $H$ is a $*$-homomorphism $\pi:A\rightarrow B(H)$. A Banach $*$-algebra $A$ is called
an $A^*$-algebra if it has an injective representation $\pi$. For an $A^*$-algebra $A$,
there is a $C^*$-algebra $C^*(A)$ and an injective $*$-homomorphism $A\hookrightarrow C^*(A)$
with dense image such that every $*$-homomorphism $A\rightarrow \sB$ of $A$ into a $C^*$-algebra $\sB$
extends to a unique $*$-homomorphism $C^*(A)\rightarrow \sB$. The $C^*$-algebra $C^*(A)$ is unique up to isomorphism,
and is called the {\it enveloping $C^*$-algebra} of $A$ \cite[page 42]{Takesaki}.
Explicitly, the norm $\|\cdot \|_{C^*(A)}$ of $C^*(A)$ is given by
$\|a\|_{C^*(A)}=\sup_{\pi}\|\pi(a)\|$ for $a\in A$, where $\pi$ runs over all representations
of $A$.

A unital Banach $*$-algebra $A$ is called {\it symmetric} if
for each $a\in A$, the spectrum of $a^*a$ in $A$ is contained in $\Rb_{\ge 0}$. It is well known that
a unital  $A^*$-algebra $A$ is symmetric if and only if for each $a\in A$, the spectra of $a$ in $A$ and
$C^*(A)$ are the same.
We recall briefly the reason here. The ``if" part follows from the fact that every $C^*$-algebra is
symmetric. Assume that $A$ is symmetric. By a result of Raikov \cite{Raikov} \cite[page 308, Corollary 4]{Naimark},
for every $a\in A$ with $a^*=a$, the spectral radius of $a$ in $A$ is equal to $\|a\|_{C^*(A)}$. According to
an observation of Hulanicki \cite[Proposition 2.5]{Hulanicki72} (see also \cite[Proposition 6.1]{FGLLM}),
for every $a\in A$ with $a^*=a$, the spectra of $a$ in $A$ and $C^*(A)$ are the same. It follows that
for every $a\in A$, the spectra of $a$ in $A$ and $C^*(A)$ are the same (see for example \cite[page 804]{FGLLM}).

Let $\Gamma$ be a discrete (not necessarily amenable) group. Then $\ell^1(\Gamma)$ is a unital Banach $*$-algebra with
the algebraic operations extending those of $\Cb\Gamma$. The embedding $\Cb\Gamma\hookrightarrow \cL\Gamma$ extends to an injective
representation $\ell^1(\Gamma)\hookrightarrow \cL\Gamma$.  Thus $\ell^1(\Gamma)$ is an $A^*$-algebra, and for
every $a\in \ell^1(\Gamma)$, if $a$ is invertible in $\ell^1(\Gamma)$, then it is invertible in $\cL\Gamma$.  The enveloping
$C^*$-algebra of $\ell^1(\Gamma)$ is denoted by $C^*(\Gamma)$ and is called the {\it (maximal) group $C^*$-algebra} of $\Gamma$. The embedding
$\ell^1(\Gamma)\hookrightarrow \cL\Gamma$ extends to a $*$-homomorphism $\psi: C^*(\Gamma)\rightarrow \cL\Gamma$.
The group $\Gamma$ is amenable if and only if $\psi$ is injective \cite[Theorem 4.21]{Pat}.
Thus, when $\Gamma$ is amenable, $\ell^1(\Gamma)$ is symmetric if and only if for any $a\in \ell^1(\Gamma)$, the spectra of $a$ in $\ell^1(\Gamma)$
and $\cL\Gamma$ are the same.

If $\Gamma$ is a finite extensions of a discrete nilpotent group, then $\ell^1(\Gamma)$ is symmetric \cite{LP, Ludwig}, and hence
for any $a\in \ell^1(\Gamma)$, $a$ is invertible in $\ell^1(\Gamma)$ if and only if it is invertible in $\cL\Gamma$.

Nica showed that if $\Gamma$ is a finitely generated group of subexponential growth, then for any $a\in \Cb\Gamma$, $a$ is invertible in $\ell^1(\Gamma)$ if and only if it is invertible in $\cL\Gamma$ \cite[page 3309]{Nica}. 

Jenkins \cite{Jenkins70, Jenkins71} showed that if $\Gamma$ is a discrete group containing two elements generating
a free subsemigroup, then $\ell^1(\Gamma)$ is not symmetric. 
Under the same assumption, Nica showed that there exist $a\in \Cb\Gamma$ which are invertible in $\cL\Gamma$ but not invertible in $\ell^1(\Gamma)$ \cite[Proposition 52]{Nica}.
In fact, in such case there exist $a\in \Zb\Gamma$
which are invertible in $C^*(\Gamma)$ (in particular, invertible in $\cL\Gamma$) but not invertible in $\ell^1(\Gamma)$, as the following example shows.
This example is inspired by the ideas in \cite{Jenkins71}.
I am grateful to Jingbo Xia for very helpful discussion leading to this example.

\begin{example} \label{E-not invertible}
Let $\Gamma$ be a discrete group with elements $\gamma_1, \gamma_2\in \Gamma$ generating a free subsemigroup.
We claim that for every $\lambda \in \Cb$ with $|\lambda|=3$, the element
$$a=\lambda e_{\Gamma}-(e_{\Gamma}+\gamma_1-\gamma_1^2)\gamma_2$$
is invertible in $C^*(\Gamma)$ but
not invertible in $\ell^1(\Gamma)$. Taking $\lambda=\pm 3$, we get $a\in \Zb\Gamma$.
The spectrum  of $\gamma_1$ in $C^*(\Gamma)$ is contained in the unit circle $\Tb$ of $\Cb$. By the spectral theorem
for unitaries,
$$ \|e_{\Gamma}+\gamma_1-\gamma_1^2\|_{C^*(\Gamma)} \le \max_{z\in \Tb}|1+z-z^2|<3.$$
Then
\begin{eqnarray*}
\|(e_{\Gamma}+\gamma_1-\gamma_1^2)\gamma_2\|_{C^*(\Gamma)}&\le &\|e_{\Gamma}+\gamma_1-\gamma_1^2\|_{C^*(\Gamma)}\cdot \|\gamma_2\|_{C^*(\Gamma)}\\
&=&\|e_{\Gamma}+\gamma_1-\gamma_1^2\|_{C^*(\Gamma)}<3.
\end{eqnarray*}
It follows that $a$ is invertible in $C^*(\Gamma)$, and its inverse is given by
$$ \lambda^{-1}\sum_{k=0}^{\infty}\lambda^{-k}( (e_{\Gamma}+\gamma_1-\gamma_1^2)\gamma_2)^k.$$
From the natural homomorphism $C^*(\Gamma)\rightarrow \cL\Gamma$, we see that $a$ is also invertible in
$\cL\Gamma$ with inverse $b$ given by the above formula. Under the natural embedding $\cL\Gamma\rightarrow \ell^2(\Gamma)$,
$b=\lambda^{-1}\sum_{k=0}^{\infty}\lambda^{-k}( (e_{\Gamma}+\gamma_1-\gamma_1^2)\gamma_2)^k\in \ell^2(\Gamma)$.
Since $\gamma_1$ and $\gamma_2$ generate a free subsemigroup, it is easily checked that
the supports of $( (e_{\Gamma}+\gamma_1-\gamma_1^2)\gamma_2)^k$ for $k\ge 0$ are pairwise disjoint and
$\|( (e_{\Gamma}+\gamma_1-\gamma_1^2)\gamma_2)^k\|_1=3^k$ for each $k\ge 0$. It follows that
$\lambda^{-1}\sum_{k=0}^{\infty}\lambda^{-k}( (e_{\Gamma}+\gamma_1-\gamma_1^2)\gamma_2)^k\not \in \ell^1(\Gamma)$. If $a$ were invertible in
$\ell^1(\Gamma)$, then its inverse in $\ell^1(\Gamma)$ would be $b$ and hence $b\in \ell^1(\Gamma)$, which is a contradiction.
Therefore $a$ is not invertible in $\ell^1(\Gamma)$.
\end{example}

There are discrete amenable groups which contain two elements generating free subsemigroups \cite{Hochster, Jenkins69}.
Actually Frey showed that every discrete amenable group with nonamenable subsemigroups has such elements \cite{Frey}.
Also, Chou showed that if a finitely generated elementary amenable group has no finite-index nilpotent subgroups, then it contains such elements \cite[Theorem 3.2']{Chou}.
We recall the examples in \cite{Jenkins69}. Consider the action of the multiplicative group $\Rb^*=\Rb\setminus \{0\}$
on the additive group $\Rb$ by multiplication. One has the semi-direct product group $\Rb \rtimes\Rb^*$, which
is $\Rb\times \Rb^*$ as a set and has multiplication $(s_1, t_1)\cdot (s_2, t_2)=(s_1+t_1s_2, t_1t_2)$. For
any $0\le a\le 1/2$, the subgroup $\Gamma_a$ of $\Rb \rtimes\Rb^*$ generated by $(1, a)$ and $(1, -a)$ is $2$-step solvable (and hence amenable),
and $(1, a)$ and $(1, -a)$ generate a free subsemigroup in $\Gamma_a$.


\end{document}